\definecolor{mygray}{gray}{.25}
\DeclareMathAlphabet{\mathpzc}{OT1}{pzc}{m}{it}
\DeclareMathOperator{\Id}{Id}
\DeclareMathOperator{\Spec}{Spec}
\DeclareMathOperator{\Gl}{GL}
\DeclareMathOperator{\Hom}{Hom}
\DeclareMathOperator{\Ker}{Ker}
\DeclareMathOperator{\Dim}{dim}
\DeclareMathOperator{\Deg}{deg}
\DeclareMathOperator{\init}{in}
\DeclareMathOperator{\Fitt}{Fitt}
\DeclareMathOperator{\state}{state}
\DeclareMathOperator{\dets}{dets}
\newtheorem{thm}{Theorem}[section]
\newtheorem{lem}[thm]{Lemma}
\newtheorem{prop}[thm]{Proposition}
\newtheorem{cor}[thm]{Corollary}
\newtheorem{main}{Main Theorem}
\theoremstyle{definition}
\newtheorem{df}[thm]{Definition}
\newtheorem{cons}[thm]{Construction}
\theoremstyle{remark}
\newtheorem{rem}[thm]{Remark}
\newtheorem*{remnn}{Remark}
\theoremstyle{definition}
\newtheorem{ex}[thm]{Example}
\newcommand{\IO}{\mathcal{O}}
\newcommand{\IN}{\mathbb{N}}
\newcommand{\IZ}{\mathbb{Z}}
\newcommand{\IQ}{\mathbb{Q}}
\newcommand{\IA}{\mathbb{A}}
\newcommand{\IP}{\mathbb{P}}
\newcommand{\A}{\mathcal{A}}
\newcommand{\Pc}{\mathcal{P}}
\newcommand{\PD}{\mathcal{P}d(\A)}
\newcommand{\G}{\mathcal{G}(\A)}
\newcommand{\GM}{\mathcal{G}m(\A)}
\newcommand{\bb}[1]{{\bm{#1}}} % Vektoren fett und kursiv
\newcommand{\HA}{\mathcal{H}_{\A}}
\newcommand{\Ip}{\mathpzc{p}}
\newcommand{\Iq}{\mathpzc{q}}
\newcommand{\iivec}[2]{\left(\begin{smallmatrix}#1\\ #2\end{smallmatrix}\right)}
\newcommand{\field}{\Bbbk}
\newcommand{\mon}{\mathcal{M}}
\newcommand{\UM}{\mathcal{U}_{\mon}} 
\newcommand{\JMP}{\widetilde{J_{\mon}(\Ip)}}
\newcommand{\EX}{\vspace*{-0.5cm}\begin{flushright}\ensuremath{\Diamond}\end{flushright}}
\newcommand{\rv}{\mathfrak{r}}
\newcommand{\vect}[2]{\iivec{#1}{#2}}
\def\address#1#2{\begingroup
\noindent\parbox[t]{7.8cm}{%
\small{\scshape\ignorespaces#1}\par\vskip1ex
\noindent\small{\itshape E-mail address}%
\/: #2\par\vskip4ex}\hfill%
\endgroup}%
\author{Ren\'e Birkner }
\title{Non-coherent Components of the \\Toric Hilbert Scheme}
\date{}
\begin{document}
\maketitle

%%%%%%%%%%%%%%%%%%%%%%%%%%%%%%%%%%%%
% ABSTRACT                         %
%%%%%%%%%%%%%%%%%%%%%%%%%%%%%%%%%%%%

\begin{abstract}
	We want to understand the geometry of all irreducible components of the toric Hilbert scheme. Until now it is known that 
	the coherent component is (up to normalisation) the toric variety associated to the state polytope 
	of the toric ideal $I_{\A}$. For the non-coherent components it was only known that there exists 
	such a polytope describing the normalisation. Using the local equations and various facts about toric Hilbert schemes, 
	we will derive an explicit construction of the polytope corresponding to the normalisation of the underlying 
	reduced structure of a given non-coherent component of the toric Hilbert scheme.
\end{abstract}

%%%%%%%%%%%%%%%%%%%%%%%%%%%%%%%%%%%%%%%%%%
\section{Introduction}                   %
%%%%%%%%%%%%%%%%%%%%%%%%%%%%%%%%%%%%%%%%%%

	The toric Hilbert scheme has been studied by Arnol$'$d \cite{Arnold:Agraded}, Korkina, Post and Roelofs 
	\cite{Korkina:Classification,KorkinaPostRoelofs:Classification}, Sturmfels \cite{Sturmfels:GeomAGraded}, Peeva and Stillman 
	\cite{PeevaStillman:ToricHilbert,PeevaStillman:LocalEquations}, Maclagan and Thomas \cite{MaclaganThomas:RankTwoLattice,MaclaganThomas:CombTorHilbert}, 
	and others. It is given by the multigraded Hilbert function of the toric ideal $I_{\A}$ in $S\field[x_1,...,x_n]$. As defined in \cite{Sturmfels:Groebner} 
	an ideal isomorphic to such an initial ideal of the toric ideal is called \emph{coherent}, where isomorphic means that the corresponding algebras are isomorphic 
	as multigraded $\field$-algebras. Hence, the isomorphism classes of coherent $\A$-graded ideals are in bijection with the cones of the Gröbner fan of $I_{\A}$.
	
	The global equations of this scheme are in general very extensive, but Peeva and Stillman show that there is a cover of affine open charts around the 
	monomial $\A$-graded ideals and that the calculations of the local equations for these affine charts are much more feasible \cite{PeevaStillman:LocalEquations}. 
	They show that the toric Hilbert scheme can have several irreducible components. However, there is a unique component containing the toric 
	ideal \cite{PeevaStillman:ToricHilbert}, which in fact contains exactly all coherent $\A$-graded ideals. Therefore, this component is called the 
	\emph{coherent component} of the toric Hilbert scheme. Moreover, the normalisation of the coherent component is the toric variety associated to the Gröbner fan 
	of the toric ideal \cite{StillmanSturmfelsThomas:Algorithms}. 
	A construction by Sturmfels \cite{Sturmfels:GeomAGraded} shows that the underlying reduced structure of each irreducible component of the toric Hilbert 
	scheme is given by binomial equations so that all these reduced components are projective toric varieties and therefore each of them contains a dense torus, 
	the so-called \emph{ambient torus} of that component. Thus, this implies the existence of a polytope $P_V$ for each component $V$ of the toric Hilbert scheme 
	such that the normalisation of this component is the toric variety associated to the normal fan of $P_V$ (Corollary \ref{cor:EveryComponentHasPolytope}).
	
	This work presents an explicit construction of the polytope $P_V$ for an arbitrary non-coherent reduced component $V$ of a toric Hilbert scheme. For this we use 
	the local equations around a monomial $\A$-graded ideal $\mon$ in new variables $\bb{y}$ to construct a so-called \emph{universal family} $J_{\mon}(\Ip)$ for each 
	non-coherent irreducible component containing $\mon$ (Definition \ref{df:universalFamilyOfAComponent}), where $\Ip$ denotes an associated prime of the local 
	equations defining the underlying reduced structure $V_{\Ip}$ of such a non-coherent irreducible component. 
	
	\begin{main}[Theorem \ref{thm:universalfamilyofcomponent}]
	  	The universal family $J_\mon(\Ip)$ parametrises the ambient torus of the reduced irreducible non-coherent component $V_{\Ip}$ in the toric Hilbert scheme 
	  	over the points in $(\field^*)^{\dim(V_{\Ip})}$.
	  	To be precise, the closed points of this irreducible component of the toric Hilbert scheme intersected with its ambient torus are exactly those $\A$-graded 
	  	ideals that are given by substituting a point $(\lambda_i)_{i=1,...,\dim(V_{\Ip})} \in (\field^*)^{\dim(V_{\Ip})}$ into $J_\mon(\Ip)$.
	\end{main}
	
	Thus, the ambient torus $(\field^*)^{\dim(V_{\Ip})}$ of a non-coherent component is different from the torus $\Spec(\field[\bb{x}^{\pm 1}])$ of the coherent 
	component. Then we construct a homogenised version {\footnotesize $\JMP$} of $J_\mon(\Ip)$ by introducing a new set of variables 
	$z_1,...,z_{\dim(V_{\Ip})}$ with the same degrees as the $\bb{y}$ variables (Definition \ref{df:generalisedUniversalFamily}). This homogeneous 
	family {\footnotesize $\JMP$} is called the \emph{generalised universal family} and also gives the ambient torus of the non-coherent component. But more 
	importantly, it gives the main result:
	
	\begin{main}[Theorem \ref{thm:ThePolytope}]
	  	Let $\mon$ be a monomial $\A$-graded ideal and {\footnotesize $\JMP$} in $\field[\bb{x},y_i,z_i \,|\, i=1,...,\dim(V_{\Ip})]$ a generalised 
	  	universal family  of a reduced component $V_{\Ip}$ containing $\mon$. Then {\footnotesize $\JMP$} is homogeneous with respect to a strictly positive 
	  	grading and the normalisation of the component $V_{\Ip}$ is the toric variety defined by the normal fan of the state polytope 
	  	$\state(${\footnotesize $\JMP$}$)$, \textit{i.e.} the Gröbner fan of {\footnotesize $\JMP$}. 
	\end{main}
	
	A more detailed approach to this construction in combination with an overview on most of the results about toric Hilbert schemes can be found in the present author`s 
	doctoral thesis \cite{Birkner:Exotic}.

%%%%%%%%%%%%%%%%%%%%%%%%%%%%%%%%%%%%%%%%%%
\section{Preliminaries}                  %
%%%%%%%%%%%%%%%%%%%%%%%%%%%%%%%%%%%%%%%%%%

		We will be using the lattice $M:=\IZ^d$ with its associated vector space $M_{\IQ}:=M\otimes_{\IZ}\IQ$ over $\IQ$ and dual lattice 
		$N := \Hom_{\IZ}(M,\IZ)\cong \IZ^d$ with corresponding $\IQ$-vector-space $N_{\IQ}:=N\otimes_{\IZ}\IQ$. We will work over an 
		algebraically closed field $\field$. 
		
		Let $\A$ be a collection of $n$ vectors $a_1,...,a_n$ in $M$ such that $0\in M$ is not contained in their positive hull. One can also 
		define $\A$ as a linear map of lattices $\A : \IZ^n \rightarrow M$ by $e_i \mapsto a_i$ such that 
		\begin{equation}\label{eq:KerAcapN}
			\Ker(\A)\cap \IN^n = 0.
		\end{equation} 
		We will often make use of both notations. 
		We denote by $\IN\A$ the semigroup generated by $a_1,...,a_n$ in $M$. This is the image of $\IN^n$ under $\A$. Furthermore, we suppose that $\A$ has 
		rank $d$. Otherwise we can restrict $M$ to the sublattice $M\cap \A(\IQ^n)$ in which $\A$ has full rank. Define 
		a polynomial ring $S:=\field[x_1,...,x_n]$ over $\field$ with an $M$-grading given by $\A$. This just means that $x_i$ has degree $a_i$. 
		Furthermore, an element $r \in S$ is $M$-homogeneous if every term of $r$ has the same degree in the $M$-grading induced by $\A$, and an ideal 
		$I\subseteq S$ is $M$-homogeneous if for every $r\in I$ and its decomposition $r=\sum r_a$ into $M$-degree parts, such that $r_a\in I$ holds for each $a$, 
		where $r_a$ is the sum over all terms in $r$ with degree $a\in M$.
		
		\begin{df}
			An ideal $I\subseteq S$ is called \emph{$\A$-graded} if it is $M$-homogeneous and 
			\begin{equation}
				\Dim_\field\left( S/I \right)_a = \left\{ \begin{array}[]{ll}
																								1 & \textrm{if } a \in \IN\A \\
																								0 & \textrm{otherwise} 
																							\end{array}\right.
			\end{equation}
			holds for its multigraded Hilbert function. We call a $\field$-algebra $\A$-graded if it is of the form $S/I$ for some $\A$-graded ideal $I$.
		\end{df}
			
		A special $\A$-graded ideal is the \emph{toric ideal} $I_{\A}$, which is the kernel of the $\field$-algebra homomorphism $S \rightarrow \field[t_1^{\pm 1},...,t_d^{\pm 1}]$ 
		that maps $x_i$ to $\bb{t}^{a_i}=t_1^{a_i^1}\cdot \ldots \cdot t_d^{a_i^d}$ for $1\leq i \leq n$. Therefore, 
		$I_{\A}=\left\langle \bb{x}^{\bb{a}}-\bb{x}^{\bb{b}} \,|\, \bb{a},\bb{b} \in \IN^n, \bb{a}-\bb{b} \in \Ker(\A)\right\rangle$ is generated 
		by binomials and is by definition $\A$-graded, because it identifies all monomials of the same degree. The toric ideal gives rise to a particular class of 
		$\A$-graded ideals.
		
		\begin{df}
			An $\A$-graded ideal $I$ is called \emph{coherent} if there is a degree $0$ isomorphism between $S/I$ and $S/I_{\A}$, \textit{i.e.} there exists 
			$\lambda \in \left(\field^*\right)^n$ such that $\lambda.I = I_{\A}$.
		\end{df}
		
		\begin{df}\label{definition:Graver}
			A binomial $\bb{x}^{\bb{u}}-\bb{x}^{\bb{v}}\in I_{\A}$ is called \emph{primitive} (or \emph{Graver}) if there are no proper monomial factors $\bb{x}^{\bb{u}'}$ 
			of $\bb{x}^{\bb{u}}$ and $\bb{x}^{\bb{v}'}$ of $\bb{x}^{\bb{v}}$ with $\bb{x}^{\bb{u}'}-\bb{x}^{\bb{v}'}\in I_{\A}$. A degree $\bb{a} \in \IN\A$ is called a 
			\emph{primitive} (or \emph{Graver}) \emph{degree} if there exists some primitive binomial $\bb{x}^{\bb{u}}-\bb{x}^{\bb{v}}$ with degree $\A \bb{u}=\A \bb{v}= \bb{a}$ and 
			we denote the set of all primitive degrees by $\PD$. The set of all primitive binomials is called the \emph{Graver basis} and we denote it by $\G$.
		\end{df}
		
		In \cite{MaclaganThomas:CombTorHilbert} and \cite{PeevaStillman:ToricHilbert}, a scheme is constructed that parametrises 
		all $\A$-graded ideals, the so-called toric Hilbert scheme. We will give the definition of Peeva and Stillman \cite{PeevaStillman:ToricHilbert} 
		in which they use Fitting ideals (see \cite[Section 20.2]{Eisenbud:CommAlg}). Let $\bb{a} \in \PD \subset \IN\A$ be a primitive degree. We denote 
		the number of elements in the fiber of $\bb{a}$ under the map $\A$ by $|\bb{a}|+1$ and the set of all such monomials by $\GM_{\bb{a}}$, and assume 
		that we have ordered them. The Graver basis of $\A$ is finite, so that we have $\PD=\left\{\bb{a}_1,...,\bb{a}_l\right\}$. From this we define the 
		following product of projective spaces over $\field$
		\[\Pc:=\IP_\field^{|\bb{a}_1|} \times ... \times \IP_\field^{|\bb{a}_l|}.\]
		Now consider the subset $\mathcal{Y}\subset \Pc \times \IA_\field^n$ given by
		\[I(\mathcal{Y})=\left\langle \left.\xi^{\bb{a}}_{\bb{n}} \cdot \bb{x}^{\bb{m}} - \xi^{\bb{a}}_{\bb{m}} \cdot \bb{x}^{\bb{n}} \,\right|\, 
																\forall \bb{a} \in \PD \textrm{ and } \bb{x}^{\bb{m}},\bb{x}^{\bb{n}}\in \GM_{\bb{a}}\right\rangle\]
		with projection $\phi : \mathcal{Y} \rightarrow \Pc$ and the grading on $\IO_{\mathcal{Y}}$ induced by the $M$-grading given by $\A$ on $S$. Hence, $\phi^\#$ 
		is $M$-homogeneous and therefore we can write 
		\[\mathcal{Y}=\Spec_{\Pc}\left(\bigoplus_{\bb{a}\in \IN\A} L_{\bb{a}}\right)\]
		where $L_{\bb{a}}$ are coherent $\IO_{\Pc}$-modules and $L_0=\IO_{\Pc}$ (see \cite[Definition 3.1]{PeevaStillman:ToricHilbert}). 
		We define an ideal of $\IO_{\Pc}$
		\[\dets(\phi) = \sum_{\bb{a} \in \IN\A} \Fitt_0(L_{\bb{a}}),\]
		where $\Fitt_0(L_{\bb{a}})$ is the $0$-th Fitting ideal of $L_{\bb{a}}$. For details on Fitting ideals we refer the reader to \cite{Eisenbud:CommAlg}
		
		\begin{df}
			The \emph{toric Hilbert scheme} is defined as
			\[\HA = V\left(\dets(\phi)\right) \subset \Pc.\]	
		\end{df}
		
		\begin{rem}\label{rem:THSCorrespondence}
			The parametrisation of the $\A$-graded ideals is as follows: Each closed point of $V\left(\dets(\phi)\right)$ is a product of points
			\[\left(\lambda^{\bb{a}_1},...,\lambda^{\bb{a}_l}\right) \in \IP_\field^{|\bb{a}_1|} \times ... \times \IP_\field^{|\bb{a}_l|}\]
			such that
			\[\left\langle\lambda^{\bb{a}}_{\bb{n}}\bb{x}^{\bb{m}}-\lambda^{\bb{a}}_{\bb{m}}\bb{x}^{\bb{n}}\,|\,
						\forall \bb{a} \in \PD \textrm{ and } \bb{x}^{\bb{m}},\bb{x}^{\bb{n}}\in \GM_{\bb{a}}\right\rangle\]
			is the corresponding $\A$-graded ideal.
		\end{rem}
		
		In \cite[Chapter 5]{Sturmfels:GeomAGraded} Sturmfels has constructed a parameter space of $\A$-graded ideals which is not given by determinantal equations. 
		He considers the projective product space $\Pc':=\prod \IP^{|\bb{a}|}$ for all $\bb{a}\in Z_r(\A)\cap \IN\A$ where 
		\[Z_r(\A):= \left\{\left. \sum_{i=1}^n \lambda_i\cdot a_i \,\right|\, 0 \leq \lambda_i \leq r \textrm{ for } i=1,...,n, \lambda_i \in \IQ\right\}\subset M_{\IQ}\]
		is the zonotope with edge length $r=(n-d)^{2^n}\cdot a^{d2^n}$. With the same notation for $\xi \in \Pc'$ as before in $\Pc$ he defined the closed 
		subscheme $\Pc_{\A} \subset \Pc'$ by the equations 
		\[ \xi^{\bb{a}}_{\bb{m}_1} \cdot \xi^{\bb{a}+\bb{b}}_{\bb{m}_2+\bb{n}} = \xi^{\bb{a}}_{\bb{m}_2} \cdot \xi^{\bb{a}+\bb{b}}_{\bb{m}_1+\bb{n}},\]
		whenever $\Deg(\bb{m}_1)=\Deg(\bb{m}_2)=\bb{a}$ and $\Deg(\bb{n})=\bb{b}$. This is also a description of all $\A$-graded ideals.
		
		\begin{thm}
			There exists a natural bijection between the set of $\A$-graded ideals in $S$ and the set of closed points of $\Pc_{\A}$.
		\end{thm}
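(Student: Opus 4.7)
The plan is to establish the bijection by writing down explicit maps in each direction and checking they are mutually inverse. For the forward direction, given an $\A$-graded ideal $I$, I would use that $\dim_\field(S/I)_{\bb{a}}=1$ for every $\bb{a}\in\IN\A$: pick (up to scalar) the unique $\field$-linear functional $\ell_{\bb{a}}\colon S_{\bb{a}}\to\field$ with kernel $I\cap S_{\bb{a}}$, and set $\xi^{\bb{a}}_{\bb{m}}:=\ell_{\bb{a}}(\bb{x}^{\bb{m}})$ for $\bb{x}^{\bb{m}}\in\GM_{\bb{a}}$. Restricting to $\bb{a}\in Z_r(\A)\cap\IN\A$ gives a well-defined point of $\Pc'$. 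The Sturmfels equations follow from one-dimensionality: modulo $I$, both $\bb{x}^{\bb{m}_1}$ and $\bb{x}^{\bb{m}_2}$ are scalar multiples of any fixed basis element of $(S/I)_{\bb{a}}$, and multiplying by $\bb{x}^{\bb{n}}$ and again invoking one-dimensionality in degree $\bb{a}+\bb{b}$ yields the required quadratic relation.

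For the reverse direction, given a closed point $\xi\in\Pc_{\A}$, I would define
\[ J_{\xi}:=\left\langle\,\xi^{\bb{a}}_{\bb{n}}\,\bb{x}^{\bb{m}}-\xi^{\bb{a}}_{\bb{m}}\,\bb{x}^{\bb{n}}\;\Bigm|\;\bb{a}\in Z_r(\A)\cap\IN\A,\ \bb{x}^{\bb{m}},\bb{x}^{\bb{n}}\in\GM_{\bb{a}}\,\right\rangle. \]
This is patently $M$-homogeneous. To see it is $\A$-graded, I would show $\dim_\field(S/J_{\xi})_{\bb{a}}\leq 1$ for every $\bb{a}\in\IN\A$, since the opposite inequality follows once at least one coordinate $\xi^{\bb{a}}_{\bb{m}}$ is nonzero. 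For each such $\bb{a}$ one picks a reference monomial $\bb{x}^{\bb{m}_0}$ with $\xi^{\bb{a}}_{\bb{m}_0}\neq 0$ (directly if $\bb{a}$ lies in the zonotope, otherwise via a decomposition argument below), and then the defining relations express every other monomial in $\GM_{\bb{a}}$ as a scalar multiple of $\bb{x}^{\bb{m}_0}$ modulo $J_{\xi}$. Finally, to check mutual inverse, the inclusion $J_{\xi}\subseteq I$ in the composition $I\mapsto\xi\mapsto J_{\xi}$ is immediate, and equality follows from comparing Hilbert functions; the other composition returns the same $\xi$ up to the projective scalings in each factor.

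The main obstacle is showing that the zonotope $Z_r(\A)$ with Sturmfels' specific edge length $r=(n-d)^{2^n}\cdot a^{d\,2^n}$ is large enough to propagate the dimension condition from inside the zonotope to all of $\IN\A$. Concretely, for a degree $\bb{a}$ far outside $Z_r(\A)$, one must connect any two monomials $\bb{x}^{\bb{m}},\bb{x}^{\bb{n}}\in\GM_{\bb{a}}$ by a chain of Sturmfels multiplication moves whose intermediate degrees lie inside the zonotope, and one must be sure this chain yields a \emph{consistent} scalar ratio independent of the chain chosen. The choice of $r$ is dictated precisely by a Carath\'eodory-type bound ensuring that any integer vector in $\Ker(\A)$ admits a decomposition as a signed sum of primitive vectors whose partial sums stay within $Z_r(\A)$. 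Granted this combinatorial input, consistency of the scalar ratios is forced by the Sturmfels equations themselves, and the bijection follows; everything else is routine bookkeeping.
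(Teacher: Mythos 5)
The paper does not actually prove this statement: it simply cites \cite[Theorem 5.3]{Sturmfels:GeomAGraded}, so there is no internal argument to compare against. Taken on its own terms, your proposal correctly sets up the two maps (reading off the projective coordinates in each degree from the one-dimensional graded pieces of $S/I$, and conversely generating an ideal from the coordinates over degrees in the zonotope) and correctly explains why the Sturmfels quadratic relations are forced by one-dimensionality. You also correctly identify where the real difficulty lies.

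However, there is a genuine gap: you defer the entire content of the theorem. The assertion that the specific edge length $r=(n-d)^{2^n}\cdot a^{d\,2^n}$ is ``large enough to propagate the dimension condition from inside the zonotope to all of $\IN\A$,'' and that this is guaranteed by ``a Carath\'eodory-type bound,'' is exactly what needs to be \emph{proved}, not assumed. Concretely, one must establish (i) that every $\A$-graded ideal is generated in degrees lying in $Z_r(\A)\cap\IN\A$, or equivalently that the coordinates $\xi^{\bb{a}}_{\bb{m}}$ for $\bb{a}$ in the zonotope determine $I$ uniquely, and (ii) that for an arbitrary closed point of $\Pc_{\A}$, the ideal $J_{\xi}$ you write down has the correct Hilbert function in every degree, including those far outside the zonotope. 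Point (ii) requires showing that for each such degree one can connect any two monomials by a chain of multiplication moves whose \emph{both} source and target degrees at every step lie in the zonotope (not merely that partial sums of a kernel decomposition stay inside), and that the resulting scalar is independent of the chain; the independence does follow formally from the quadratic relations once such chains exist, but their existence is the nontrivial combinatorial fact behind Sturmfels' choice of $r$, and your proposal gives no argument for it. Describing this as ``routine bookkeeping'' once the combinatorial input is granted understates that the combinatorial input \emph{is} the theorem. As written, the proposal is a correct outline of the construction but not a proof.
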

		
		\begin{proof}
			See \cite[Theorem 5.3]{Sturmfels:GeomAGraded}.
		\end{proof}
		
		In \cite{HaimanSturmfels:Multigraded} Haiman and Sturmfels give general constructions of different multigraded Hilbert schemes. 
		In particular, their work shows that the toric Hilbert scheme $\HA$ by Peeva and Stillman and the parameter space $\Pc_{\A}$ by Sturmfels 
		are in fact the same, \textit{i.e.} $\HA\cong \Pc_{\A}$ holds. For this use \cite[Propositions 5.2 + 5.3]{HaimanSturmfels:Multigraded} in combination with 
		\cite[Theorem 3.16]{HaimanSturmfels:Multigraded}. Thus, we get the following:
		
		\begin{lem}\label{lem:THSbinomialequations}
			The toric Hilbert scheme $\HA$ is given by binomial equations.\qed
		\end{lem}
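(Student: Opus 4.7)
The plan is essentially to transport Sturmfels' explicit presentation of his parameter space $\Pc_{\A}$ through the Haiman--Sturmfels identification with $\HA$, so very little new work is required.

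First, I would recall the construction of $\Pc_{\A} \subset \Pc'$ recorded just above the statement: by definition $\Pc_{\A}$ is cut out inside the product of projective spaces $\prod \IP^{|\bb{a}|}$ (indexed by degrees in $Z_r(\A)\cap \IN\A$) by the relations
\[ \xi^{\bb{a}}_{\bb{m}_1}\cdot \xi^{\bb{a}+\bb{b}}_{\bb{m}_2+\bb{n}} \;=\; \xi^{\bb{a}}_{\bb{m}_2}\cdot \xi^{\bb{a}+\bb{b}}_{\bb{m}_1+\bb{n}}, \]
for all admissible multidegrees. Each such equation is an equality of two monomials of the same multidegree in the $\xi$-coordinates, so the defining ideal of $\Pc_{\A}$ in the homogeneous coordinate ring of $\Pc'$ is generated by binomials.

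Second, I would invoke the comparison result of Haiman and Sturmfels that is cited in the excerpt: combining Propositions 5.2 and 5.3 with Theorem 3.16 of \cite{HaimanSturmfels:Multigraded} gives a canonical isomorphism of schemes $\HA \cong \Pc_{\A}$. Conceptually this holds because both schemes represent the same moduli functor, namely the functor of flat families of $\A$-graded quotients of $S$ with the prescribed Hilbert function; uniqueness of the representing object then forces the identification.

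Finally, composing these two facts yields the statement: $\HA$ is isomorphic as a scheme to the closed subscheme $\Pc_{\A} \subset \Pc'$, which is itself defined by binomial equations in the coordinates $\xi^{\bb{a}}_{\bb{m}}$. The main conceptual point, rather than an obstacle, is to be precise about what ``given by binomial equations'' means: not in the Peeva--Stillman embedding in $\Pc$ (where the equations come from Fitting ideals and are a priori determinantal), but via the canonical identification $\HA \cong \Pc_{\A}$ that realises $\HA$ as a binomial subscheme of a product of projective spaces. Once this is made explicit, the lemma follows immediately.
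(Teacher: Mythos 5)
Your proposal is correct and follows exactly the argument the paper intends: the lemma is stated with \qed because the justification is in the preceding paragraph, namely the Haiman--Sturmfels identification $\HA \cong \Pc_{\A}$ combined with the fact that $\Pc_{\A}$ is by construction cut out by the binomial relations $\xi^{\bb{a}}_{\bb{m}_1}\xi^{\bb{a}+\bb{b}}_{\bb{m}_2+\bb{n}} = \xi^{\bb{a}}_{\bb{m}_2}\xi^{\bb{a}+\bb{b}}_{\bb{m}_1+\bb{n}}$. Your added clarification that ``given by binomial equations'' refers to the Sturmfels embedding rather than the determinantal Fitting-ideal presentation of Peeva--Stillman is a worthwhile precision that the paper leaves implicit.
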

		
		If we use the primary decomposition theorem from the work of Eisenbud and Sturmfels on binomial ideals 
		\cite[Theorem 7.1]{EisenbudSturmfels:BinomialIdeals} it follows that every irreducible component of $\HA$ is generated by binomial 
		ideals. Since the radical of a binomial ideal is again a binomial ideal (see \cite[Theorem 3.1]{EisenbudSturmfels:BinomialIdeals}) 
		the reduced structure of each irreducible component, \textit{i.e.} the variety given by the radicals of a covering of local rings, is given 
		by binomial equations. This argument proves the following lemma:
		
		\begin{lem}\label{lem:EveryComponentisToric}
			The underlying reduced structure of each irreducible component of the toric Hilbert scheme is a (not necessarily normal) projective 
			toric variety.\qed
		\end{lem}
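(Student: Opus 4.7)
The plan is to combine Lemma \ref{lem:THSbinomialequations} with the structural results of Eisenbud and Sturmfels on binomial ideals, and then invoke the geometric fact that a reduced, irreducible closed subscheme of a toric variety defined by binomial equations is itself toric. Since the ambient space $\Pc$ is a product of projective spaces, it is a smooth projective toric variety, and the whole argument takes place inside this ambient toric variety.

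First, I would use Lemma \ref{lem:THSbinomialequations} to fix a covering of $\HA$ by affine open charts on which the defining ideal is generated by binomials in the local coordinates. By \cite[Theorem 7.1]{EisenbudSturmfels:BinomialIdeals}, every associated prime of a binomial ideal is binomial (after passing to an algebraically closed field, which we have assumed), so the minimal primes of these local ideals are binomial. The irreducible components of $\HA$ correspond to the minimal primes, and hence, locally on each chart, the underlying reduced structure of a component is cut out by a binomial prime ideal. Alternatively, one can appeal directly to \cite[Theorem 3.1]{EisenbudSturmfels:BinomialIdeals}, which says that the radical of a binomial ideal is binomial, applied to each minimal primary component.

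Second, I would translate this into the conclusion by showing that a reduced, irreducible, closed subvariety of $\Pc$ defined (on every standard affine chart) by a binomial prime ideal is a projective toric variety. Locally such a subvariety is the closure of a single orbit of the big torus $T$ of $\Pc$ acting on the chart, because a binomial prime in a Laurent polynomial ring defines a subtorus, and its closure in an affine chart is an affine toric variety. Globally, the intersections of our component with the big torus of $\Pc$ patch together to a subtorus $T'\subset T$, and the component is the closure of $T'$ in $\Pc$. Thus it carries a dense torus acting by restriction of the $T$-action on $\Pc$, which is exactly the definition of a (not necessarily normal) projective toric variety.

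The main obstacle is really the bookkeeping in the second step: one has to verify that the local description by binomial primes glues to a well-defined subtorus of $T$ with a single dense orbit, rather than a disjoint union of translated subtori (which would contradict irreducibility, but needs to be ruled out explicitly). Once irreducibility of the reduced component is used to conclude that the subtorus $T'$ is connected, and hence itself a torus, the toric structure follows and the lemma is proved.
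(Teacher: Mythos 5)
Your argument follows the paper's own proof almost verbatim: both deduce from Lemma~\ref{lem:THSbinomialequations} together with \cite[Theorems 3.1 and 7.1]{EisenbudSturmfels:BinomialIdeals} that each reduced irreducible component is cut out by a binomial prime ideal, and then conclude it is a projective toric variety. The only difference is that you make explicit the final geometric step (a reduced irreducible binomial subvariety of $\Pc$ is the closure of a coset of a subtorus of the ambient torus, hence toric), which the paper leaves implicit; just note that a binomial prime over $\field$ generally cuts out a \emph{translate} of a subtorus rather than a subtorus through the identity, but this does not affect the conclusion since a coset of a subtorus is itself a torus.
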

		
		\begin{cor}\label{cor:EveryComponentHasPolytope}
			For each irreducible component $V$ of the toric Hilbert scheme there is a polytope $P_V$ such that the projective variety of the normal 
			fan of $P_V$ is the normalisation of $V$.\qed
		\end{cor}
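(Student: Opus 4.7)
The plan is to deduce the corollary directly from Lemma \ref{lem:EveryComponentisToric} together with standard structure results from toric geometry, so only a short chain of implications is needed.

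First I would take an arbitrary irreducible component $V$ of $\HA$ and pass to its underlying reduced structure $V_{\mathrm{red}}$. By Lemma \ref{lem:EveryComponentisToric} this is a (possibly non-normal) projective toric variety, which means that $V_{\mathrm{red}}$ contains a dense torus $T \cong (\field^*)^k$ whose action on itself extends to all of $V_{\mathrm{red}}$. Let $\nu : \widetilde{V} \to V_{\mathrm{red}}$ denote the normalisation morphism.

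Second, I would check that $\widetilde{V}$ is again a projective toric variety. Projectivity is clear because $\nu$ is a finite morphism and the image $V_{\mathrm{red}}$ is projective, so $\widetilde{V}$ is finite over a projective scheme and hence projective. For the toric structure, the dense torus $T \subset V_{\mathrm{red}}$ is already normal, so $\nu$ is an isomorphism over $T$; identifying $T$ with the open preimage $\nu^{-1}(T) \subset \widetilde{V}$ produces a dense torus in $\widetilde{V}$. The torus action on $V_{\mathrm{red}}$ lifts to $\widetilde{V}$ by the universal property of normalisation applied to the composition $T \times \widetilde{V} \to T \times V_{\mathrm{red}} \to V_{\mathrm{red}}$, since $T \times \widetilde{V}$ is normal. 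Thus $\widetilde{V}$ is a normal projective toric variety.

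Third, I would invoke the fundamental correspondence of toric geometry: every normal projective toric variety is isomorphic to the toric variety $\TV(\Sigma)$ associated to the inner normal fan $\Sigma = \Sigma(P_V)$ of a rational polytope $P_V$ (for instance, $P_V$ arises from the polytope of any very ample torus-invariant divisor on $\widetilde{V}$). Taking this $P_V$ yields the required polytope with $\TV(\Sigma(P_V)) \cong \widetilde{V}$, which is the normalisation of $V$. The main obstacle, such as it is, is only the brief verification that the torus action extends through the normalisation morphism; everything else is a direct citation of standard toric machinery.
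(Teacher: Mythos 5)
Your proposal is correct and follows essentially the same route the paper intends: the paper gives no separate argument for the corollary (it is marked with a \qed as an immediate consequence of Lemma~\ref{lem:EveryComponentisToric}), and what you have written out is precisely the standard chain of facts --- normalisation of a projective toric variety is finite, hence projective, the torus action lifts since $T\times\widetilde V$ is normal, and a normal projective toric variety is the toric variety of the normal fan of a lattice polytope --- that the paper leaves to the reader.
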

		
		\begin{df}\label{df:ambientTorus}
			We call the dense torus of an irreducible component $V$ of $\left(\HA\right)_{\textrm{red}}$ the \emph{ambient torus} of $V$.
		\end{df}
		
		\begin{lem}\label{lem:diagonaltorusaction}
			Any torus acting on the underlying reduced structure of an irreducible component of a Toric Hilbert Scheme $\HA$ acts diagonally 
			by scaling each coordinate.
		\end{lem}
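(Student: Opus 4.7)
The strategy exploits the fact that, by Lemma \ref{lem:EveryComponentisToric}, the reduced component $V$ is a (possibly non-normal) projective toric variety with a dense ambient torus $T_V$ (Definition \ref{df:ambientTorus}). The plan is to show first that the $T_V$-action itself scales each projective coordinate of $\Pc'$ by a character, and second that every torus action on $V_{\text{red}}$ factors through $T_V$, so that diagonal scaling is automatic for any acting torus.

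For the first step I would consider the embedding $T_V \hookrightarrow V \hookrightarrow \Pc'$. Restricting to an affine chart of $\Pc'$ on which some fixed coordinate $\xi^{\bb{a}}_{\bb{m}_0(\bb{a})}$ is nonzero in each factor, the remaining coordinates $\xi^{\bb{a}}_{\bb{m}} / \xi^{\bb{a}}_{\bb{m}_0(\bb{a})}$ are regular functions on $V$. After restricting further to the open subset of $T_V$ where all of these remain nonzero they become invertible regular functions on a torus, and any such function is a nonzero scalar times a character. Hence each projective coordinate $\xi^{\bb{a}}_{\bb{m}}$ is a $T_V$-eigenvector, so $T_V$ scales each coordinate diagonally by a character.

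For the second step, given any torus $T$ acting on $V_{\text{red}}$, I would lift the action to the normalisation $\widetilde V$ by the universal property. The variety $\widetilde V$ is a normal projective toric variety with the same dense torus $T_V$, and $\widetilde V \setminus T_V$ is a union of $T_V$-invariant divisors which must be preserved by the $T$-action, so $T$ acts on $T_V$ itself. Since every algebraic automorphism of a torus decomposes as a translation followed by a group (lattice) automorphism, the connected group $T$ maps into the translation component, giving a homomorphism $T \to T_V$. Combining with the first step shows $T$ acts diagonally. The principal technical obstacle is precisely this reduction to $T_V$ in the non-normal case; an alternative to the normalisation argument is to use Lemma \ref{lem:THSbinomialequations} directly, noting that since $V$ is cut out by binomials in $\Pc'$ it is invariant under the maximal diagonal subtorus of $\Pc'$ preserving those binomials, which is precisely $T_V$ up to a finite cover, and this subtorus acts diagonally by construction.
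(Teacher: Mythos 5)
The paper's proof is a short reduction to the binomial-ideal structure: by Lemma~\ref{lem:THSbinomialequations} and the Eisenbud--Sturmfels results cited there, the reduced component is the vanishing locus of a prime binomial ideal in $\Pc'$, hence the closure of a coset of a subtorus of the diagonal torus of $\Pc'$, and that subtorus scales coordinates by construction. Your Step~1 takes a longer route and has a gap: after restricting to the open subset of $T_V$ where the ratios $\xi^{\bb{a}}_{\bb{m}}/\xi^{\bb{a}}_{\bb{m}_0(\bb{a})}$ are all nonzero, you invoke the fact that invertible regular functions on a torus are scalars times characters. But that open set need not be all of $T_V$, and units of a proper localisation of $\field[T_V]$ are not restricted to monomials (on $\field^*\setminus\{1\}$, the function $y-1$ is a unit and is no character). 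To close this you need to know that each ratio, if not identically zero, is nowhere zero on $T_V$ --- and that is exactly what the binomial-ideal description of $V$ supplies. So the repaired Step~1 collapses into the alternative you sketch at the end, which is the paper's actual argument.

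Your Step~2 does not work as stated and is aimed at a claim stronger than the lemma can deliver. Lifting a $T$-action to the normalisation $\widetilde V$ is automatic by the universal property, but the lifted action has no reason to preserve the toric boundary $\widetilde V\setminus T_V$. Indeed the literal statement fails for arbitrary torus actions: $\IP^1$ is cut out by binomial equations, yet a one-parameter subgroup of $\IP\Gl(2)$ need not fix $\{0,\infty\}$ and so need not scale the homogeneous coordinates. What the paper actually uses later (Lemma~\ref{lem:localChartcontainsTorus}) is only that the \emph{ambient} torus $T_V$ acts diagonally, and that follows directly from the binomial-ideal structure without any reduction step; there is no need --- and in general no way --- to factor an arbitrary acting torus through $T_V$.
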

		
		\begin{proof}
			The global equations of $\HA$ are a binomial ideal by Lemma \ref{lem:THSbinomialequations}. It follows from 
			\cite[Theorem 7.1]{EisenbudSturmfels:BinomialIdeals} that the associated primary ideals are also binomial ideals. Thus, the irreducile 
			components are given by binomial equations. Using \cite[Theorem 3.1]{EisenbudSturmfels:BinomialIdeals} we get that also their radicals 
			are binomials ideals. Hence, the underlying reduced irreducible structure of an irreducible component is given by binomial equations. 
			Therefore, any torus must act diagonally.
		\end{proof}
		
		\begin{thm}\label{thm:theCoherentComponent}
			The toric ideal $I_{\A}$ lies on a unique irreducible component of the toric Hilbert scheme $\HA$, the 
			\emph{coherent component}. The normalisation of the coherent component is the projective toric variety defined by the Gröbner 
			fan of $I_{\A}$.
		\end{thm}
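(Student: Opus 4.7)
The plan is to split the theorem into its two assertions: (a) $I_{\A}$ lies on a unique irreducible component of $\HA$, and (b) the normalisation of that component is the projective toric variety of the Gröbner fan of $I_{\A}$. Both rest on exploiting the natural $T = (\field^*)^n$-action on $S$, which scales variables, preserves $M$-homogeneity and the Hilbert function, and therefore acts on $\HA$.

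First I would show that the $T$-orbit of $I_{\A}$ is exactly the set of coherent $\A$-graded ideals. This is immediate from the definition of coherent: an ideal $I$ is coherent precisely when $I = \lambda.I_{\A}$ for some $\lambda \in T$. Since orbits are irreducible, the closure $C := \overline{T\cdot I_{\A}} \subset \HA$ is an irreducible closed subset containing $I_{\A}$ and every coherent ideal. Note that the stabiliser of $I_{\A}$ is the $d$-dimensional torus $\Spec(\field[M])$ that acts on $S$ through the $M$-grading, so $\dim C = n-d$. To conclude that $I_{\A}$ lies on only one irreducible component (and that this component is $C$), I would compute the Zariski tangent space $T_{I_{\A}}\HA$ via the Peeva--Stillman local equations around an initial monomial ideal $\mon = \init_w(I_{\A})$ for a generic weight $w$; the Jacobian criterion at the $T$-translate corresponding to $I_{\A}$ should produce tangent dimension exactly $n-d$. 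This is the main technical step, and I expect it to be the hardest part of (a), since it requires a careful inspection of the defining binomial relations in \cite{PeevaStillman:LocalEquations} at the specific point $I_{\A}$ in a chart. Once smoothness of $I_{\A}$ is in hand, uniqueness of the component through $I_{\A}$ and the identity with $C$ both follow.

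For (b) I would use Gröbner degenerations to construct an explicit morphism from the toric variety $X_{\Sigma}$ of the Gröbner fan $\Sigma$ of $I_{\A}$ to (the reduced structure of) the coherent component $C$. For each maximal cone $\sigma \in \Sigma$, fix a weight $w \in \interior(\sigma)$; the flat $\field[t]$-family $\init_{tw}(I_{\A})$ defines a morphism $\IA^1 \to C$ sending $t=1$ to $I_{\A}$ and $t=0$ to the corresponding coherent monomial ideal $\mon_\sigma$. Glued with the $T$-action, these extend to a $T$-equivariant morphism $\psi \colon X_{\Sigma} \to C$. By Lemma \ref{lem:EveryComponentisToric} and Corollary \ref{cor:EveryComponentHasPolytope}, $C$ is a (possibly non-normal) projective toric variety with the same ambient torus as $X_{\Sigma}$, and $\psi$ restricts to the identity on that torus, hence is birational.

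Finally I would verify that $\psi$ is the normalisation. Since $X_{\Sigma}$ is a normal projective toric variety, it suffices to check that $\psi$ is finite. Properness is automatic from projectivity of $X_{\Sigma}$ and $C$, and the $T$-equivariance reduces quasi-finiteness to checking that each torus fixed point of $X_{\Sigma}$ (corresponding to a maximal cone $\sigma$, hence to $\mon_\sigma$) has finite fibre. This is ensured by the fact that distinct maximal cones of $\Sigma$ give distinct coherent monomial $\A$-graded ideals, by the correspondence between Gröbner cones and initial monomial ideals of $I_{\A}$. The potential obstacle here is ensuring that $\psi$ factors through the underlying reduced structure $C_{\textrm{red}}$ (which is a priori possibly non-reduced along embedded components), but $X_{\Sigma}$ being reduced and irreducible makes this automatic. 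This identifies $P_C$ in Corollary \ref{cor:EveryComponentHasPolytope} with any state polytope of $I_{\A}$, completing the theorem.
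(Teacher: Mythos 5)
The paper does not give an argument of its own for this theorem: the proof is simply the one-line citation ``See \cite[Theorem~4.1]{StillmanSturmfelsThomas:Algorithms}'' (and the introduction cites \cite{PeevaStillman:ToricHilbert} for the uniqueness claim). Your proposal is, in effect, a reconstruction of the strategy used in those two sources rather than an independent route, so there is no ``paper proof'' to compare against in detail; what can be assessed is internal soundness.

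Your decomposition into (a) uniqueness of the component through $I_{\A}$ and (b) identification of the normalisation is correct, and the structure of both halves matches the literature. For (a), showing the $T$-orbit closure $C=\overline{T\cdot I_{\A}}$ is $(n-d)$-dimensional and then proving that $\dim T_{[I_{\A}]}\HA=n-d$ (so $[I_{\A}]$ is a smooth point and hence lies on exactly one component, necessarily $C$) is precisely the approach of Peeva and Stillman. You correctly flag the tangent-space bound as the genuine technical content; you do not actually carry out that computation, which leaves the crux of (a) as a citation, but that is no worse than what the paper itself does. For (b), the Gr\"obner-degeneration construction of a $T$-equivariant birational morphism $\psi\colon X_{\Sigma}\to C_{\mathrm{red}}$, followed by the finiteness argument, is the Stillman--Sturmfels--Thomas proof; your reduction of quasi-finiteness to fibres over torus fixed points is legitimate because fibre dimension is upper semicontinuous on the source and $X_\Sigma$ is complete, so every orbit specialises to a fixed point, where $T$-equivariance then controls all other fibres.

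One minor imprecision worth tightening: in (b) you should be explicit that the gluing of the charts $U_\sigma\to C$ into a global morphism uses that the per-cone maps all restrict to the same orbit map on the dense torus together with separatedness of $C$, so the extensions agree on overlaps; this is implicit in your ``glued with the $T$-action'' but is the point where one would actually have to check compatibility. Also note that what the theorem calls ``the normalisation of the coherent component'' is by convention the normalisation of its reduced structure, so there is no real ``potential obstacle'' in your factoring-through-$C_{\mathrm{red}}$ remark; any morphism from a reduced scheme factors through the reduced subscheme automatically, as you say. Overall the proposal is a faithful and correctly structured account of the cited proofs, with the one deferred step (the computation $\dim T_{[I_{\A}]}\HA=n-d$) being exactly the step the paper also defers.
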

		
		\begin{proof}
			See \cite[Theorem 4.1]{StillmanSturmfelsThomas:Algorithms}.
		\end{proof}
		
		Thus, the coherent component is well known and described. For the other ones, the \emph{non-coherent components}, we need local equations around 
		monomial $\A$-graded ideals for the toric Hilbert scheme. These have been presented in \cite{PeevaStillman:ToricHilbert}.	Fix some monomial 
		$\A$-graded ideal $\mon$. Then for every degree $\bb{a} \in \IN\A$ there is a unique monomial $s_{\bb{a}}$ which is not in $\mon$. This is called 
		the \emph{$\mon$-standard monomial} of degree $\bb{a}$.
		
		\begin{df}
			Let $\UM \subset \HA$ be the affine open subscheme
			\[\UM := \HA \cap \left\{ \left.\xi_{s_{\bb{a}}}^{\bb{a}}\neq 0 \,\right|\, \bb{a} \in \PD\right\}.\]
		\end{df}
		
		This means we have chosen an affine chart for every $\IP^{|\bb{a}|}$ in $\Pc$ and intersected these with the toric Hilbert 
		scheme. Since it follows from the construction of the toric Hilbert scheme that $\bb{x}^{\bb{m}} \in \mon$ if $\xi_{\bb{m}}^{\bb{a}}=0$ 
		(see Remark \ref{rem:THSCorrespondence}), it follows that $\mon$ is contained in $\UM$.
		
		\begin{rem}\label{rem:openCoverStandard}
			The affine open subscheme $\UM$ corresponds exactly to those $\A$-graded ideals with the same standard monomials as $S/\mon$.
		\end{rem}		
		
		\begin{lem}
			The set $\left\{ \UM \right\}$ is an affine open cover for $\HA$, where $\mon$ runs over all monomial $\A$-graded ideals.\qed
		\end{lem}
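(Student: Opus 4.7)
The plan is to verify two things: (a) each $\UM$ is affine open in $\HA$, and (b) the collection $\left\{\UM\right\}$ covers $\HA$ as $\mon$ ranges over all monomial $\A$-graded ideals. Part (a) is essentially formal from the definition, while part (b) requires producing, for each $\A$-graded ideal, a monomial $\A$-graded ideal sharing its standard monomials.

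For (a), I would observe that $\UM$ is by definition the intersection of $\HA$ with $\bigcap_{\bb{a} \in \PD} \left\{\xi^{\bb{a}}_{s_{\bb{a}}} \neq 0\right\}$ inside $\Pc = \prod_{\bb{a} \in \PD} \IP^{|\bb{a}|}$. Since the Graver basis is finite, $\PD$ is finite, and for each factor $\IP^{|\bb{a}|}$ the locus $\left\{\xi^{\bb{a}}_{s_{\bb{a}}} \neq 0\right\}$ is the standard affine chart isomorphic to $\IA^{|\bb{a}|}$. Their product is an affine open subset of $\Pc$, so intersecting with the closed subscheme $\HA$ yields an affine open subscheme.

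For (b), I would show that every $\A$-graded ideal $I \subset S$ (the closed points of $\HA$, by Remark \ref{rem:THSCorrespondence}) lies in $\UM$ for a suitably chosen monomial $\A$-graded ideal $\mon$. The natural candidate is to fix any term order $\prec$ on $S$ and set $\mon := \init_\prec(I)$. Once I know that $\mon$ is a monomial $\A$-graded ideal, the covering claim is immediate from standard Gr\"obner basis theory: the monomials outside $\mon$ form a $\field$-basis of $S/I$, so for each $\bb{a} \in \PD$ the $\mon$-standard monomial $s_{\bb{a}}$ has nonzero class in $S/I$, which by Remark \ref{rem:THSCorrespondence} translates to $\xi^{\bb{a}}_{s_{\bb{a}}}(I) \neq 0$, placing $I$ in $\UM$.

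The main obstacle in (b) is verifying that $\init_\prec(I)$ is indeed $\A$-graded. The hypothesis $\Ker(\A) \cap \IN^n = 0$ guarantees the existence of a linear functional on $M_\IQ$ that is strictly positive on each $a_i$, which refines the $M$-grading to a positive $\IZ$-grading on $S$. Under this positive grading the standard theory of initial ideals applies and shows that $\init_\prec(I)$ is $M$-homogeneous; moreover, since passing to the initial ideal preserves the $\field$-dimension of each graded piece of the quotient, $\init_\prec(I)$ has the same multigraded Hilbert function as $I$ and is therefore $\A$-graded. Granted this, both parts of the lemma follow.
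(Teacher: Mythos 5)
Your proposal is correct and follows essentially the same approach as the paper's proof: given an $\A$-graded ideal $I$, pass to an initial monomial ideal $\mon = \init_\prec(I)$ and use that the $\mon$-standard monomials form a $\field$-basis of $S/I$, so $\xi^{\bb{a}}_{s_{\bb{a}}} \neq 0$ for all $\bb{a}\in\PD$. You are somewhat more explicit than the paper in two places that it leaves implicit --- the affine-openness of $\UM$ (immediate from finiteness of $\PD$) and the verification that $\init_\prec(I)$ is itself a monomial $\A$-graded ideal via preservation of the multigraded Hilbert function --- but the underlying argument is identical.
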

		
		\begin{proof}
			Consider an arbitrary $\A$-graded ideal $I$. Take any initial monomial ideal $\mon$ of $I$. Then the $\mon$-standard monomial 
			$s_{\bb{a}}$ of degree $\bb{a}$ of $\mon$ forms a vector space basis of $\left( S/I \right)_{\bb{a}}$ for every 
			$\bb{a} \in \IN\A$. Since therefore $s_{\bb{a}}$ is not contained in $I$, which means $\xi_{s_{\bb{a}}}^{\bb{a}}\neq 0$ for 
			all $\bb{a} \in \PD$, we have $I \in \UM$.
		\end{proof}
		
		\begin{lem}\label{lem:localChartcontainsTorus}
			Let $V\subseteq \HA$ be an irreducible component containing $\mon$. Then $\left(\UM\cap V\right)_{\textrm{red}}$ contains the ambient torus of 
			$V_{\textrm{red}}$.
		\end{lem}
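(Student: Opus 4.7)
The plan is to exhibit $V_{\textrm{red}}\setminus\UM$ as a proper closed subset of $V_{\textrm{red}}$ that is invariant under the ambient torus $T_V$ of $V_{\textrm{red}}$; the dense orbit of $T_V$ cannot be contained in any proper closed invariant subset, so this forces $T_V\subseteq\UM$.

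First I would use Lemma \ref{lem:EveryComponentisToric} to view $V_{\textrm{red}}$ as a projective toric variety and write $T_V$ for its dense torus. The essential input is then Lemma \ref{lem:diagonaltorusaction}: the $T_V$-action on $V_{\textrm{red}}\subseteq \Pc$ is the restriction of a diagonal action scaling each homogeneous coordinate $\xi^{\bb{a}}_{\bb{m}}$ by a character. Consequently, for every primitive degree $\bb{a}\in\PD$ the locus $V_{\textrm{red}}\cap\{\xi^{\bb{a}}_{s_{\bb{a}}}=0\}$ is $T_V$-invariant, and therefore so is the finite union
\[
V_{\textrm{red}}\setminus\UM \;=\; \bigcup_{\bb{a}\in\PD}\bigl(V_{\textrm{red}}\cap\{\xi^{\bb{a}}_{s_{\bb{a}}}=0\}\bigr).
\]

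To see that this invariant subset is proper, I would combine the hypothesis $\mon\in V$ with the observation, already recorded in the paragraph preceding Remark \ref{rem:openCoverStandard} via Remark \ref{rem:THSCorrespondence}, that $\mon\in\UM$. Hence $\mon\in\UM\cap V$, so $V_{\textrm{red}}\setminus\UM\subsetneq V_{\textrm{red}}$. Since $T_V$ is dense in the irreducible variety $V_{\textrm{red}}$, it cannot sit inside any proper closed subvariety, and therefore $T_V\subseteq\UM\cap V_{\textrm{red}}=(\UM\cap V)_{\textrm{red}}$, which is the statement of the lemma.

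The only conceptual subtlety is identifying the abstract torus $T_V$ of the toric variety $V_{\textrm{red}}$ with a concrete torus acting diagonally inside $\Pc$, but this is precisely what Lemma \ref{lem:diagonaltorusaction} supplies; once that identification is in place, there is no real obstacle and the rest is bookkeeping.
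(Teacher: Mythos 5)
Your proof is correct and is essentially the same argument as the paper's: both rest on Lemma~\ref{lem:diagonaltorusaction} to show that the vanishing locus of each standard-monomial coordinate $\xi^{\bb{a}}_{s_{\bb{a}}}$ is invariant under the ambient torus, and both use $\mon\in\UM$ together with $\mon\in\overline{T_V}$ to rule out the torus meeting that locus. (The paper phrases this pointwise in terms of which monomials lie in the ideals along an orbit and its closure; you phrase it as ``the dense orbit cannot meet a proper closed invariant subset'' — the same fact, stated once for the whole set rather than monomial by monomial.)
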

		
		\begin{proof}
			We may assume that $V$ is already reduced. Let $\xi$ be a point on the ambient torus of $V$. Then the corresponding $\A$-graded ideal is
			\[I_{\xi} =\left\langle \xi_{\bb{n}}^{\bb{a}}\bb{x}^{\bb{m}}-\xi_{\bb{m}}^{\bb{a}}\bb{x}^{\bb{n}}\,|\,\bb{x}^{\bb{m}},\bb{x}^{\bb{n}}\in \GM_{\bb{a}}, \bb{a} \in \PD \right\rangle.\]
			By Lemma \ref{lem:diagonaltorusaction} the ambient torus acts diagonally on the coefficients $\xi_{\bb{n}}^{\bb{a}},\xi_{\bb{m}}^{\bb{a}}$. Hence, 
			if $\bb{x}^{\bb{m}_0}\in I_{\xi}$ then $\bb{x}^{\bb{m}_0}\in I$ holds for every $I$ in the ambient torus orbit of $I_{\xi}$. 
			Moreover, $\bb{x}^{\bb{m}_0}\in I_{\xi}$ implies $\bb{x}^{\bb{m}_0}\in I'$ for every $I'$ in the closure of the ambient torus orbit.
			
			On the other hand, since $\mon$ lies on $V$, \textit{i.e.} the closure of the ambient torus, $\bb{x}^{\bb{m}_0}\notin \mon$ implies $\bb{x}^{\bb{m}_0}\notin I$ 
			for every $I$ in the ambient torus. Thus, the monomials of $S/\mon$ are standard monomials for all ideals in the ambient torus of $V$. Hence 
			by Remark \ref{rem:openCoverStandard} the ambient torus lies in $\UM$.
		\end{proof}
		
		\begin{lem}\label{lemma:UMring}
			Let $\mon$ be a monomial $\A$-graded ideal. For the affine open chart $\UM$	and the corresponding set of variables 
			$Z=\left\{\left. \xi_{\bb{m}}^{\bb{a}} \,\right|\, \bb{a} \in \PD, \deg(\bb{x}^{\bb{m}})=\bb{a}, \bb{x}^{\bb{m}}\neq s_{\bb{a}} \right\}$ set
			\[\renewcommand{\arraystretch}{1.35}\begin{array}{rl}
				G := & \left\langle \left.\bb{x}^{\bb{m}} - \xi_{\bb{m}}^{\bb{a}}\cdot s_{\bb{a}} 
												\,\right|\, \bb{a} \in \PD, \deg(\bb{x}^{\bb{m}})=\bb{a}, \bb{x}^{\bb{m}}\neq s_{\bb{a}}\right\rangle\subseteq \field[Z]\otimes_{\field}S \quad \textrm{and}\\
				F := & \sum_{\bb{a} \in \IN\A} \Fitt_0\left(\left( \field[Z][x_1,...,x_n]/G\right)_{\bb{a}}\right)\subseteq \field[Z].
			\end{array}\renewcommand{\arraystretch}{1}\]
			Then
			\[\UM = \Spec\left(\field[Z]/F\right).\]
			In particular, the ideal $F$ is generated by the maximal minors of matrices of the form
			\[\left(\begin{array}{cc} 
								\begin{array}{cc} 
									\begin{array}{cc} 
										1 & \\ & 1
									\end{array} & 
									0\\
									0 & 
									\begin{array}{cc} 
										\ddots & \\ & 1 
									\end{array} 
								\end{array} & 
								0 \\ 
								\begin{array}{cccc}								
									r^{\bb{a}}_0 & r^{\bb{a}}_1 & \cdots & r^{\bb{a}}_{|\bb{a}|-1}
								\end{array} &
								\begin{array}{ccc}
									r^{\bb{a}}_{|\bb{a}|} & \cdots & r^{\bb{a}}_t
								\end{array}
							\end{array}\right),\]			
			(where we assume that $s_{\bb{a}}=\bb{x}^{\bb{m}_{|\bb{a}|}}$). Therefore, we have 
			\[F= \sum_{\bb{a} \in \IN\A} \left( G : s_{\bb{a}} \right)\]
			and thus $F$ is a binomial ideal.
		\end{lem}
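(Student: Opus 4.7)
The plan is to descend the global Fitting-ideal description of $\HA$ to the affine chart $\UM$ and then to read off its coordinate ring by writing down a concrete presentation of each graded piece. First I would identify the scheme-theoretic restriction of the universal family $\mathcal{Y} \subset \Pc \times \IA_\field^n$ to the open subscheme of $\Pc$ where $\xi^{\bb{a}}_{s_{\bb{a}}} \neq 0$ for all $\bb{a} \in \PD$. In this chart every projective factor $\IP_\field^{|\bb{a}|}$ becomes an affine space with coordinates $Z$, and setting $\xi^{\bb{a}}_{s_{\bb{a}}} = 1$ turns the defining binomials $\xi^{\bb{a}}_{\bb{n}}\bb{x}^{\bb{m}} - \xi^{\bb{a}}_{\bb{m}}\bb{x}^{\bb{n}}$ with $\bb{n}=s_{\bb{a}}$ into the generators of $G$; the remaining binomials, with neither endpoint equal to $s_{\bb{a}}$, reduce to consequences of these. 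Hence the pullback of $\mathcal{Y}$ is $\Spec(\field[Z][\bb{x}]/G)$ and the restriction of $V(\dets(\phi))$ is cut out precisely by $F = \sum_{\bb{a} \in \IN\A} \Fitt_0\bigl((\field[Z][\bb{x}]/G)_{\bb{a}}\bigr)$, giving $\UM = \Spec(\field[Z]/F)$.

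Next I would build the $\field[Z]$-module presentation of each graded piece. Enumerate the monomials of degree $\bb{a}$ as $\bb{x}^{\bb{m}_0},\ldots,\bb{x}^{\bb{m}_{|\bb{a}|-1}},s_{\bb{a}} = \bb{x}^{\bb{m}_{|\bb{a}|}}$, giving $|\bb{a}|+1$ generators. For each non-standard $\bb{x}^{\bb{m}_i}$ I would produce, by repeatedly applying generators of $G$ coming from minimal generators of $\mon$ (which have primitive degree), an element $\bb{x}^{\bb{m}_i} - r^{\bb{a}}_i s_{\bb{a}} \in G$ with $r^{\bb{a}}_i \in \field[Z]$; uniqueness of the standard monomial in each degree of $\mon$ ensures the rewriting terminates with $s_{\bb{a}}$ as the sole surviving monomial. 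These $|\bb{a}|$ columns form the identity block together with the entries $r^{\bb{a}}_0,\ldots,r^{\bb{a}}_{|\bb{a}|-1}$ in the last row. All other syzygies of the degree-$\bb{a}$ piece come from products of generators of $G$ by monomials of complementary degree; performing column operations with the identity block clears their first $|\bb{a}|$ entries, leaving a single entry $r^{\bb{a}}_j\in\field[Z]$ in the last row for $|\bb{a}| \leq j \leq t$. This produces the matrix in the stated normal form.

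Given the normal form the maximal-minor calculation is immediate: a non-zero $(|\bb{a}|+1)$-minor must combine all $|\bb{a}|$ identity columns with exactly one tail column indexed by some $j \geq |\bb{a}|$, and expansion along that column yields $\pm r^{\bb{a}}_j$. Consequently $\Fitt_0\bigl((\field[Z][\bb{x}]/G)_{\bb{a}}\bigr) = (r^{\bb{a}}_{|\bb{a}|},\ldots,r^{\bb{a}}_t)$, and by construction each $r^{\bb{a}}_j$ satisfies $r^{\bb{a}}_j s_{\bb{a}} \in G$, so it lies in $(G : s_{\bb{a}})$; conversely any $r \in (G : s_{\bb{a}})\cap\field[Z]$ is itself a relation on $s_{\bb{a}}$ in degree $\bb{a}$ and hence belongs to $\Fitt_0$. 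Summing over $\bb{a}$ gives $F = \sum_{\bb{a} \in \IN\A}(G : s_{\bb{a}})$, and binomiality follows because $G$ is binomial and colon ideals of binomial ideals by monomials are binomial by \cite[Theorem 3.1]{EisenbudSturmfels:BinomialIdeals}.

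The main obstacle I expect is step two: rigorously producing the normal form of the presentation. One must show both that each non-standard degree-$\bb{a}$ monomial really does reduce to a scalar multiple of $s_{\bb{a}}$ modulo $G$ with coefficient in $\field[Z]$---a termination-and-uniqueness argument leaning on the $\A$-graded hypothesis and the fact that the minimal generators of $\mon$ carry primitive degree---and that after column-reducing against the identity block no syzygies among the non-standard generators survive outside those collected in the tail columns. Everything else reduces to formal manipulations of $G$.
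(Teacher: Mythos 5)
The paper's own ``proof'' is a bare citation to \cite[Corollary 4.5]{PeevaStillman:ToricHilbert}, so there is no internal argument to compare against; you are reconstructing the Peeva--Stillman proof blind. Your overall plan --- restrict the global Fitting-ideal description to the chart where all $\xi^{\bb{a}}_{s_{\bb{a}}}$ are units, build a graded $\field[Z]$-module presentation of each piece, column-reduce into the displayed normal form, read off maximal minors, and identify them with $(G:s_{\bb{a}})\cap\field[Z]$ --- is the right skeleton and matches what the cited reference does.

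There is, however, a genuine gap in your Step 2, precisely at the point you yourself flag as the obstacle. You claim that ``uniqueness of the standard monomial in each degree of $\mon$ ensures the rewriting terminates with $s_{\bb{a}}$.'' Uniqueness gives confluence of the rewriting (if two chains both halt they halt at $s_{\bb{a}}$), but it does \emph{not} give termination. Termination of the rewrite $\bb{x}^{\bb{m}'}\mapsto \xi^{\bb{a}'}_{\bb{m}'}\,s_{\bb{a}'}$ is exactly the assertion that the $\bb{x}$-exponents strictly decrease under some term order $\prec_{\bb{x}}$ with $\mon=\init_{\prec_{\bb{x}}}(I_{\A})$ --- and such a term order exists if and only if $\mon$ is coherent. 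For non-coherent $\mon$ the reduction may cycle, which is precisely why Construction~\ref{cons:localincoherentequations} (and Peeva--Stillman's Section~4) equips the remainder $R(\,\cdot\,,\overline{G_\mon})$ with explicit loop detection and sets $R=0$ on a cycle, and why the description of $\UM$ in the non-coherent case passes through the localisation $\field[Z]_{\langle Z\rangle}/F$ rather than a straightforward Gröbner reduction. Your proof as written therefore establishes the normal form and the equality $\Fitt_0=(G:s_{\bb{a}})\cap\field[Z]$ only for coherent $\mon$; the non-coherent case needs the loop-handling machinery, which you do not supply. Note also that the converse inclusion you invoke, ``any $r\in(G:s_{\bb{a}})\cap\field[Z]$ belongs to $\Fitt_0$,'' is not a free statement ($\Fitt_0(M)\subseteq\operatorname{Ann}(M)$ goes the wrong way in general); it too relies on the $\left[\begin{smallmatrix}I&0\\ *&*\end{smallmatrix}\right]$ normal form, so it inherits the same gap.

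A smaller point: the binomiality of colon ideals by a monomial is not \cite[Theorem 3.1]{EisenbudSturmfels:BinomialIdeals} (that is the radical statement); the relevant fact is their Corollary~1.7. Alternatively, once the normal form is in hand, binomiality is immediate, since every tail entry $r^{\bb{a}}_j$ arises as a difference of two $\xi$-monomials after column reduction against the identity block.
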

		
		\begin{proof}
			See \cite[Corollary 4.5]{PeevaStillman:ToricHilbert}.
		\end{proof}
		
		Now suppose that $\mon$ is a coherent monomial $\A$-graded ideal. We will give an efficient description of $\field[Z]/F$ 
		constructed by Peeva and Stillman.
		
		\begin{cons}[\textbf{Local coherent equations}]\label{cons:localcoherentequations}
			The ideal $\mon$ has a unique minimal set of monomial generators. We call this set 
			$G_\mon=\left\{f_i \,|\, 1\leq i \leq p_\mon\right\}$ where $p_\mon$ is the number of generators of $\mon$. Then for every $f_i$ there 
			is an $\mon$-standard monomial of degree $\Deg(f_i)$ which we call $s_i$. Note that $f_i-s_i$ is primitive because otherwise $f_i$ would 
			not be a minimal generator of $\mon$. Consider the ring $\field[x_1,...,x_n,y_1,...,y_{p_\mon}]$ and the ideal 
			$J_\mon$ generated by the set
			\[\overline{G_\mon} = \left\{\left. f_i-y_i\cdot s_i \,\right|\, 1\leq i \leq p_\mon \right\}. \]
			We fix a term order $\prec_{\bb{x}}$ on $S$ such that $\mon=\init_{\prec_{\bb{x}}}(I_{\A})$ and an arbitrary term order 
			$\prec_{\bb{y}}$ on $\field[y_1,...,y_{p_\mon}]$. Denote by $\prec$ the product term order of $\prec_{\bb{x}}$ and of
			$\prec_{\bb{y}}$ on $\field[x_1,...,x_n,y_1,...,y_{p_\mon}]$, which means 
			\[\bb{x}^{\bb{a}} \cdot \bb{y}^{\bb{b}} \succ \bb{x}^{\bb{a}'} \cdot \bb{y}^{\bb{b}'} \Leftrightarrow 
							\bb{x}^{\bb{a}} \succ_{\bb{x}} \bb{x}^{\bb{a}'} \textrm{ or } \bb{x}^{\bb{a}} = \bb{x}^{\bb{a}'}, \bb{y}^{\bb{b}} \succ_{\bb{y}} \bb{y}^{\bb{b}'}.\]
			For each pair of binomials $u$ and $v$ in $\overline{G_\mon}$ we form their S-polynomial 
			\[s(u,v) := \frac{h}{\init_{\succ}(u)}u - \frac{h}{\init_{\succ}(v)}v,\]
			where $h$ is the least common multiple of $\init_{\succ}(u)$ and $\init_{\succ{v}}$. This is a homogeneous 
			binomial with respect to the $M$-grading induced by $\A$ on $\field[x_1,...,x_n]$. Then we choose a reduction of $s(u,v)$ by $\overline{G_\mon}$ 
			to $\left( e(\bb{y})-h(\bb{y})\right)\cdot s_{u,v}$, where $e$ and $h$ are monomials in $\field[y_1,...,y_{p_\mon}]$. Since 
			$\prec_{\bb{x}}$ is a term order with $\mon=\init_{\prec_{\bb{x}}}(I_{\A})$, $s_{u,v}$ is the $\mon$-standard monomial of degree of 
			$s(u,v)$. Set $r(u,v):=e(\bb{y})-h(\bb{y}) \in \field[y_1,...,y_{p_\mon}]$ and define 
			\[I_\mon:=\left\langle r(u,v) \,\left|\, u,v \in \overline{G_\mon}\right.\right\rangle\subset \field[y_1,...,y_{p_\mon}].\]
		\end{cons}
		
		\begin{thm}\label{thm:localCoherentEquations}
			Let $\mon$ be a coherent monomial $\A$-graded ideal. Then 
			\[\UM = \Spec\left( \field[Z]/F\right) \cong \Spec\left( \field[y_1,...,y_{p_\mon}]/I_\mon \right).\]
		\end{thm}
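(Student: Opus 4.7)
My plan is to construct mutually inverse ring homomorphisms between $\field[Z]/F$ and $\field[y_1,\ldots,y_{p_\mon}]/I_\mon$. The central observation, which depends crucially on coherence, is that $\overline{G_\mon} = \{f_i - y_i s_i\}$ is a Gr\"obner basis in $\field[\bb{x},\bb{y}]$ with respect to the product order $\prec$: since $\mon = \init_{\prec_{\bb{x}}}(I_\A)$ and the $f_i$ are precisely the minimal generators of $\mon$, we have $\init_\prec(f_i - y_i s_i) = f_i$, and the initial ideal is $\mon \cdot \field[\bb{x},\bb{y}]$. Consequently, every polynomial in $\field[\bb{x},\bb{y}]$ has a unique normal form modulo $\overline{G_\mon}$, and for each monomial $\bb{x}^{\bb{m}} \in \mon$ with $\Deg(\bb{x}^{\bb{m}}) = \bb{a}$, iterated reduction by $\overline{G_\mon}$ terminates in a unique expression $g_{\bb{m}}(\bb{y}) \cdot s_{\bb{a}}$, since at each step the $\bb{x}$-part strictly decreases in $\prec_{\bb{x}}$ and in degree $\bb{a}$ the only $\mon$-standard monomial is $s_{\bb{a}}$.

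Next I define $\psi : \field[y_1,\ldots,y_{p_\mon}] \to \field[Z]/F$ by $y_i \mapsto \xi^{\bb{a}_i}_{f_i}$, where $\bb{a}_i = \Deg(f_i)$. To see that $\psi$ descends to $\field[\bb{y}]/I_\mon$, I retrace the reduction producing each generator $r(u,v) = e(\bb{y}) - h(\bb{y})$: the identity
\[ s(u,v) = (e(\bb{y}) - h(\bb{y})) \cdot s_{u,v} \pmod{\overline{G_\mon}} \]
in $\field[\bb{x},\bb{y}]$ translates, after the substitution $y_i \mapsto \xi^{\bb{a}_i}_{f_i}$, into a polynomial identity in $\field[Z][\bb{x}]$ showing that $(\psi(e) - \psi(h)) \cdot s_{u,v} \in G$. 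By the presentation $F = \sum_{\bb{a}} (G : s_{\bb{a}})$ of Lemma \ref{lemma:UMring}, this means $\psi(r(u,v)) \in F$.

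Conversely, I define $\phi : \field[Z] \to \field[\bb{y}]/I_\mon$ by $\xi^{\bb{a}}_{\bb{m}} \mapsto g_{\bb{m}}(\bb{y})$, using the normal form from the first paragraph. The compositions are easily checked to be mutually inverse on generators: $\phi(\psi(y_i)) = g_{f_i}(\bb{y}) = y_i$ since $f_i$ reduces in one step to $y_i s_i$, while $\psi(\phi(\xi^{\bb{a}}_{\bb{m}})) \equiv \xi^{\bb{a}}_{\bb{m}} \pmod{F}$ follows by replaying the reduction chain for $\bb{x}^{\bb{m}}$ inside $\field[Z][\bb{x}]/G$ using the generators $\bb{x}^{\bb{m}'} - \xi^{\bb{a}'}_{\bb{m}'} s_{\bb{a}'}$ of $G$.

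The main obstacle is verifying that $\phi$ factors through $F$, i.e.\ that $\phi(F) \subseteq I_\mon$. This is a Buchberger-style argument: elements of $F$ appear, by Lemma \ref{lemma:UMring}, as maximal minors encoding ``relations among standard monomials'', which one must show all arise from S-polynomial reductions of pairs in $\overline{G_\mon}$. Equivalently, one needs the identity $F = \sum_{\bb{a}}(G:s_{\bb{a}})$ to be generated (modulo the obvious $\xi \cdot s_{\bb{a}}$-type consequences of $G$) exactly by the normal-form relations $r(u,v)$. Coherence enters decisively here, because it is precisely the Gr\"obner-basis property of $\overline{G_\mon}$ that lets Buchberger's criterion guarantee every such colon element reduces via the $r(u,v)$; without coherence, extra relations generally appear and this identification fails.
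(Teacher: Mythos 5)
The paper gives no argument of its own here---the proof is a bare citation to \cite[Theorem~3.2]{PeevaStillman:LocalEquations}---so there is no in-paper route to compare against. Evaluated on its own, your proposal has the right skeleton (build inverse ring homomorphisms between $\field[Z]/F$ and $\field[\bb{y}]/I_\mon$), but it rests on a false central claim. You assert that $\overline{G_\mon}=\{f_i - y_i s_i\}$ is a Gr\"obner basis of $J_\mon$ in $\field[\bb{x},\bb{y}]$ for the product order $\prec$, with $\init_\prec(J_\mon)=\mon\cdot\field[\bb{x},\bb{y}]$, and from this deduce that every $\bb{x}^{\bb{m}}\in\mon$ has a \emph{unique} normal form $g_{\bb{m}}(\bb{y})\cdot s_{\bb{a}}$. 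But knowing $\init_\prec(f_i-y_is_i)=f_i$ only says the leading terms of the \emph{given generators} generate $\mon\cdot\field[\bb{x},\bb{y}]$, not that this is the full initial ideal. Each reduced S-polynomial $r(u,v)\cdot s_{u,v}$ lies in $J_\mon$, and if $r(u,v)\neq 0$ its leading term is $\init_\prec\!\bigl(r(u,v)\bigr)\cdot s_{u,v}$, which involves the \emph{standard} monomial $s_{u,v}\notin\mon$ and hence is not in $\mon\cdot\field[\bb{x},\bb{y}]$. So your claim would force every $r(u,v)=0$, i.e.\ $I_\mon=0$ and $\UM\cong\IA^{p_\mon}$; that is false already for the coherent $\mon$ of Example~\ref{ex:1347}, whose $I_\mon'$ is nonzero with two primary components.

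The consequence is that $g_{\bb{m}}(\bb{y})$ is \emph{not} well-defined as a polynomial: different reduction chains for $\bb{x}^{\bb{m}}$ can produce different $\bb{y}$-coefficients, and the relations $r(u,v)$ are exactly the critical-pair obstructions to that confluence. What coherence actually gives you is termination and a unique $\bb{x}$-\emph{part} of the normal form (every chain strictly decreases in $\prec_{\bb{x}}$ and must end at $s_{\bb{a}}$), not a unique $\bb{y}$-coefficient. So before you can even write down $\phi:\xi^{\bb{a}}_{\bb{m}}\mapsto g_{\bb{m}}(\bb{y})$ as a map to $\field[\bb{y}]/I_\mon$, you must prove that any two reduction paths yield $\bb{y}$-coefficients differing by an element of $I_\mon$---a genuine Newman/Buchberger-type confluence argument that your sketch replaces by the incorrect Gr\"obner-basis assertion. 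This error propagates to your final paragraph as well, which appeals to ``the Gr\"obner-basis property of $\overline{G_\mon}$'' to control the colon ideals $(G:s_{\bb{a}})$; were that property true there would be no $r(u,v)$ to reduce with. Both this well-definedness and the inclusion $\phi(F)\subseteq I_\mon$ are the actual content of \cite[Theorem~3.2]{PeevaStillman:LocalEquations}, and neither is established by what you have written.
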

		
		\begin{proof}
			See \cite[Theorem 3.2]{PeevaStillman:LocalEquations}.
		\end{proof}
		
		In the non-coherent case the description of $\UM$ is not so easy because we do not have a term order for which $\mon$ is the initial 
		ideal of $I_{\A}$. We can still compute $\field[Z]/F$ using Lemma \ref{lemma:UMring}, but Construction \ref{cons:localcoherentequations}, 
		which would be much more efficient cannot be used, since the proof uses the term order $\prec_{\bb{x}}$ and the reduction with respect 
		to it. But Peeva and Stillman even enhanced this construction to the non-coherent case. They calculated the local ring 
		of $\HA$ in the point $\mon$.
		
		\begin{thm}\label{thm:localringatnoncoherentmonomial}
			The local ring of $\HA$ at $\mon$ is
			\[\IO_{\HA,[\mon]} \cong \field[Z]_{\left\langle Z\right\rangle}/F\cong \field[y_1,...,y_{p_\mon}]_{\left\langle y_1,...,y_{p_\mon}\right\rangle}/I_\mon.\]
		\end{thm}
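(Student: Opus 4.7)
The plan is to establish the two isomorphisms in turn. The first, $\IO_{\HA,[\mon]} \cong \field[Z]_{\langle Z \rangle}/F$, is a direct consequence of Lemma \ref{lemma:UMring}: the monomial ideal $\mon$ corresponds to the origin of $\Spec(\field[Z]/F)$, because at $\mon$ every non-standard monomial $\bb{x}^{\bb{m}}$ lies in $\mon$, so the relation $\bb{x}^{\bb{m}} - \xi_{\bb{m}}^{\bb{a}} \cdot s_{\bb{a}} \in G$ forces $\xi_{\bb{m}}^{\bb{a}} = 0$ for every variable. Localising the affine open chart $\UM = \Spec(\field[Z]/F)$ at the maximal ideal $\langle Z \rangle$ then yields $\IO_{\HA,[\mon]}$.

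For the second isomorphism, I would construct a homomorphism
\[
\Phi : \field[y_1,\ldots,y_{p_\mon}]_{\langle y_1,\ldots,y_{p_\mon}\rangle} \longrightarrow \field[Z]_{\langle Z\rangle}/F, \qquad y_i \longmapsto \xi_{f_i}^{\Deg(f_i)},
\]
and show that it factors through $I_\mon$ to give an isomorphism. For surjectivity, each variable $\xi_{\bb{m}}^{\bb{a}}$ indexed by a non-minimal non-standard monomial $\bb{x}^{\bb{m}}$ must be eliminated in favour of the $y_j$'s. Factoring $\bb{x}^{\bb{m}} = \bb{x}^{\bb{m}'} \cdot f_i$ for some minimal generator $f_i$ of $\mon$ and using $f_i \equiv y_i s_i \pmod{G}$ gives $\xi_{\bb{m}}^{\bb{a}} s_{\bb{a}} \equiv y_i \bb{x}^{\bb{m}'} s_i \pmod{G}$. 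Iterating on $\bb{x}^{\bb{m}'} s_i$ produces an expression of $\xi_{\bb{m}}^{\bb{a}}$ as a polynomial in the $y_j$'s modulo $F$ to any prescribed order in the $\langle Z\rangle$-adic filtration, and Nakayama then delivers surjectivity on the localisations.

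To identify $\Ker(\Phi)$ with $I_\mon$, I would follow the template of Construction \ref{cons:localcoherentequations}: each generator $r(u,v)$ of $I_\mon$ arises by reducing an $S$-polynomial of a pair in $\overline{G_\mon}$ to the form $(e(\bb{y}) - h(\bb{y})) \cdot s_{u,v}$, and this reduction translates directly into a relation among the $\xi_{f_i}^{\Deg(f_i)}$ inside $\field[Z]/F$. The main obstacle is that without a term order $\prec_{\bb{x}}$ satisfying $\mon = \init_{\prec_{\bb{x}}}(I_\A)$, neither the elimination in the previous paragraph nor the $S$-polynomial reductions need terminate in finitely many steps. The remedy, which is the crux of the Peeva--Stillman argument, is to perform all reductions in the $\langle Z\rangle$-adic completion of $\field[Z]/F$, where termination is replaced by convergence, and then descend the resulting isomorphism back to the localisation using faithful flatness of completion together with the fact that both $F$ and $I_\mon$ are generated by elements of positive degree with respect to the respective maximal ideals.
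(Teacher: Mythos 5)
The paper offers no proof of its own for this theorem; it simply cites \cite[Theorem 4.4]{PeevaStillman:LocalEquations}. So the question is whether your reconstruction is a plausible account of that proof and whether it would go through.

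Your first isomorphism and the overall architecture (build $\Phi$, prove surjectivity, identify the kernel with $I_\mon$) match the Peeva--Stillman template, and you correctly locate the difficulty: there is no term order with $\mon = \init(I_\A)$, so reductions need not terminate. Where you diverge from the actual argument is in the remedy. Construction~\ref{cons:localincoherentequations} in the paper makes explicit what Peeva and Stillman do: they replace term-order reduction by a Mora-type tangent-cone reduction with the ordering $y_i \prec 1 \prec x_j$ and \emph{loop detection} (if $m_i \mid m_j$ for $i<j$, set the remainder to $0$). This is an algorithm that terminates and works directly over the localisation; it is not the same as passing to the $\langle Z\rangle$-adic completion and invoking convergence, even though the loop-detection rule is \emph{justified} by the observation that $1 - \text{(monomial)}$ is a unit in the local ring. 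Describing their argument as ``perform all reductions in the completion'' misattributes the mechanism.

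Beyond that mischaracterisation, two steps in your sketch are genuinely underdeveloped. First, the surjectivity claim ``to any prescribed order plus Nakayama'' needs a finiteness hypothesis to be more than heuristic: Nakayama applies to finitely generated modules, and you have not argued that $\field[Z]_{\langle Z\rangle}/F$ is finite (or even finitely generated) as a module over $\field[y]_{\langle y\rangle}/I_\mon$. Second, the descent at the end is more delicate than ``faithful flatness of completion.'' It is not true in general that a local homomorphism of Noetherian local rings inducing an isomorphism on completions is itself an isomorphism; one needs that the map is essentially of finite type (which it is here, but you must say so and use it). With these gaps filled, your route would work and is a legitimate alternative to the Mora-algorithmic proof; but as written the proposal substitutes the effective, terminating local computation that Peeva--Stillman carry out for a formal-limit argument whose passage back to the localisation is precisely the point that requires justification.
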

		
		\begin{proof}
			See \cite[Theorem 4.4]{PeevaStillman:LocalEquations}.
		\end{proof}
		
		There is even a similar construction to Construction \ref{cons:localcoherentequations} that uses Mora's tangent cone algorithm (see \cite{Ferdinando:Algorithm}) in a simplified way instead 
		of the Gröbner reduction by $\overline{G_\mon}$. We end this chapter by giving the construction of $I_\mon$ for a non-coherent monomial $\A$-graded ideal $\mon$.
		
		\begin{cons}[\textbf{Local non-coherent equations}]\label{cons:localincoherentequations}
			The first steps are exactly as in Construction \ref{cons:localcoherentequations}. There is again a minimal set of monomials  
			$G_\mon:=\left\{f_i \,|\, 1\leq i \leq p_\mon\right\}$ generating $\mon$, where $p_\mon$ is the number of generators of $\mon$. Let $s_i$ be the $\mon$-standard monomial of degree 
			$\Deg(f_i)$. Still $f_i-s_i$ is primitive by the same argument as before. Consider the ring $\field[x_1,...,x_n,y_1,...,y_{p_\mon}]$ and the ideal 
			$J_\mon$ generated by the set
			\[\overline{G_\mon} = \left\{\left. f_i-y_i\cdot s_i \,\right|\, 1\leq i \leq p_\mon \right\}. \]
			Now we can not fix a term order as before. We have to use the second reduction process from Peeva and Stillman in \cite{PeevaStillman:LocalEquations}.
			
			Fix an order $\prec$ on the monomials of $\field[x_1,...,x_n,y_1,...,y_{p_\mon}]$ with $y_i\prec 1 \prec x_j$ for all $i,j$. 
			Note that this is not a term order since $1$ is not the minimal element. Then $f_i$ is the initial term of each element in 
			$\overline{G_\mon}$. Let $m$ be a monomial in $\field[x_1,...,x_n,y_i,...,y_{p_\mon}]$. Then the \emph{remainder} $R(m,\overline{G_\mon})$ 
			is constructed as follows. If $m$ is not divisible by any of the monomials $f_i$ then $R(m,\overline{G_\mon})=m$, otherwise we have $m=f_i\cdot u$ for some $i$ and 
			monomial $u$. Then we reduce $m$ to $m_1:=u\cdot y_i\cdot s_i$. We repeat this reduction until at some point we either get at some point some $m_p$ that is not further 
			reducible by that method, in which case we set $R(m,\overline{G_\mon})=m_p$, or we obtain a loop
			\[m \rightarrow m_1\rightarrow m_2 \rightarrow ... \rightarrow m_i \rightarrow ... \rightarrow m_j \rightarrow ...\]
			where $m_i$ divides $m_j$. Then we set $R(m,\overline{G_\mon})=0$. This reduction is extended to polynomials by linearity. Note that 
			the remainder of any monomial is either $0$ or $\bb{y}^{\bb{e}}\cdot s_a$ for some standard monomial $s_a$ and $\bb{e}\in\IN^{p_{\mon}}$.
			
			For each pair of binomials $u$ and $v$ in $\overline{G_\mon}$ we form their S-polynomial $s(u,v)$ as in Construction \ref{cons:localcoherentequations} and set 
			\[r(u,v):=R(s(u,v),\overline{G_\mon})/s_{u,v},\]
			where $s_{u,v}$ is the standard monomial of degree of $s(u,v)$. Then 
			\[I_\mon:=\left\langle r(u,v) \,\left|\, u,v \in \overline{G_\mon}\right.\right\rangle\subset \field[y_1,...,y_{p_\mon}].\]
		\end{cons}

%%%%%%%%%%%%%%%%%%%%%%%%%%%%%%%%%%%%%%%%
\section{Universal Families}           %
%%%%%%%%%%%%%%%%%%%%%%%%%%%%%%%%%%%%%%%%

		We will now construct a so-called universal family that parametrises the ambient torus of a non-coherent component of the toric Hilbert scheme. 
		For this let $\mon$ be a monomial $\A$-graded ideal. At first, assume that $\mon$ is coherent. Then by Theorem \ref{thm:localCoherentEquations} 
		we can compute the local equations $I_\mon \subseteq \field[y_1,...,y_l]$ of $\HA$ around $\mon$ where $l$ is the number of generators 
		of $\mon=\left\langle \bb{x}^{\bb{m}_1},...,\bb{x}^{\bb{m}_l}\right\rangle$.
		
		\begin{df}
			We call the ideal
			\[J_\mon = \left\langle \bb{x}^{\bb{m}_1} - y_1\cdot s_1,...,\bb{x}^{\bb{m}_l} - y_l\cdot s_l \right\rangle\]
			from Construction \ref{cons:localcoherentequations} the \emph{universal family} of $\UM$ with \emph{defining ideal} $I_\mon$.
		\end{df}
		
		Note that by Lemma \ref{lemma:UMring} the $\A$-graded ideals that correspond to the points in 
		$\UM$ are precisely given by $J_\mon$ for all $(y_1,...,y_l)$ in the variety of $I_\mon$. 
		
		We will now outline the construction of a new universal family that describes the ambient torus of the underlying reduced structure of a 
		non-coherent component containing $\mon$. This is done in several steps. Firstly, we remove redundant variables from $I_{\mon}$ 
		and $J_{\mon}$ (Construction \ref{cons:reduceRedundantVariables}). Then we construct the primary decomposition of the 
		resulting defining ideal $I_{\mon}'$ to get the primary ideals $\Iq$ defining the irreducible components containing $\mon$ 
		(Propositions \ref{prop:coherentprimaryideals} and \ref{prop:incoherentprimaryideals}). Because the underlying reduced structure 
		of each component is a projective toric variety we take the radical $\Ip=\sqrt{\Iq}$ for each of these primary 
		ideals (Definition \ref{df:ComponentofAnAssociatedPrime}). Then we set the variables that are generators of $\Ip$ 
		to zero in the universal family $J_{\mon}'$ (Construction \ref{cons:removeSingleVariables}). Now the prime ideal 
		$\Ip$ has become a pure binomial ideal, \textit{i.e.} containing no monomials, and we perform a change of the $\bb{y}$ 
		coordinates in $J_{\mon}'$ so that $\Ip$ becomes trivial. By this we get a universal family that describes the ambient torus of
		the reduced structure of that non-coherent component without any defining ideal (Construction \ref{cons:Isomorphism}). 
		Let us go through this construction step by step.
		
		As seen in Construction \ref{cons:localcoherentequations}, the ideal $I_\mon$ is a binomial ideal, which 
		also follows from Lemma \ref{lem:THSbinomialequations}.  Hence, the generators of $I_\mon$ may contain binomials 
		of the form
		\[y_i - \prod_{j\neq i} y_j^{b_j}\]
		for some exponents $b_j$. Then we call the single variable $y_i$ a \emph{redundant variable}, since we can remove 
		$y_i$ from $I_\mon$ and $J_\mon$ by substituting $y_i$ by the product $\prod_{j\neq i} y_j^{b_j}$.
		
		\begin{cons}[\textbf{Removing redundant variables}]\label{cons:reduceRedundantVariables}
			Let $J_\mon$ be the universal family of a neighbourhood $\UM$ with defining ideal $I_\mon$. Let $y_i$ be a redundant 
			variable given by
			\[y_i - \prod_{j\neq i} y_j^{b_j} \in I_\mon.\]
			Then we \emph{remove the redundant variable} from $I_\mon$ and $J_\mon$ with the maps
			\[\Phi_i : \field[\bb{y}] \rightarrow \field[y_j \,|\, j \neq i],\, y_j \mapsto \left\{\begin{array}{cc} 
																					y_j & \textrm{if } j \neq i\\ 
																					\prod_{j\neq i} y_j^{b_j} & \textrm{if } j = i
																						\end{array}\right.\]
			and $\Psi_i = \Id_{\field[\bb{x}]} \otimes \Phi_i : \field[\bb{x},\bb{y}] \rightarrow \field[\bb{x}, y_j \,|\, j \neq i]$, respectively.
			
			We repeat this until there are no more redundant variables left. Then we denote by $\rv\subseteq \{1,...,l\}$ the indices 
			of the \emph{remaining variables} in $I_\mon$ and $J_\mon$ and write $I_\mon'$ and $J_\mon'$ for the ideals obtained by removing 
			the redundant variables in $I_\mon$ and $J_\mon$ respectively. This means we have
			\begin{eqnarray*}
				I_\mon' & \subseteq & \field[y_i \,|\, i \in \rv]\quad \textrm{and}\\
				J_\mon' & = & \left\langle \left.\bb{x}^{\bb{m}_j}- p_j(\bb{y})\cdot s_j\,\right|\, j=1,...,l\right\rangle,
			\end{eqnarray*}
			where $p_j(\bb{y})$ is the monomial into which $y_j$ has been converted by removing all redundant variables.
		\end{cons}		
		
		\begin{rem}\label{rem:IMintoJM}
			The points in $\UM$ are still completely described by substituting a solution of $I_\mon'$ for the $\bb{y}$ variables in $J_\mon'$.
		\end{rem}
		
		Still $I_\mon'$ gives only a parametrisation of the neighbourhood $\UM$ of $\mon$ and not of the different components that 
		contain $\mon$. However, if we decompose $I_\mon'$ into its primary ideals, then each of these primary ideals determines exactly one 
		of the possibly embedded components of $\HA$ intersecting $\UM$. 
		
		\begin{lem}
			Let $\mon$ be coherent and $I_{\mon}'=\bigcap \Iq_i$ be a minimal primary decomposition, then every component $V(\Iq_i)\subset \UM$ 
			contains $\mon$.
		\end{lem}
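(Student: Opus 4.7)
The plan is to exhibit a \emph{strictly positive} $\IR$-grading on $\field[\bb{x},\bb{y}]$ with respect to which $J_\mon$ and hence $I_\mon'$ are homogeneous. Once this is established, the variety of $I_\mon'$ is a cone with apex at the origin of affine $\bb{y}$-space; each irreducible and embedded component is thus itself a cone containing the origin, which by Remark \ref{rem:IMintoJM} corresponds to the point $\mon \in \UM$.

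To construct the grading, I would use coherence of $\mon$: there exists a strictly positive weight vector $\omega \in \IR^n_{>0}$ on $\field[\bb{x}]$ with $\init_\omega(I_\A) = \mon$. For each minimal generator $f_i = \bb{x}^{\bb{m}_i}$ of $\mon$, the associated $\mon$-standard monomial $s_i$ has the same $\A$-degree as $f_i$ but strictly smaller $\omega$-weight. Setting
\[\omega(y_i) := \omega(f_i) - \omega(s_i) > 0\]
extends $\omega$ to a strictly positive weighting of $\field[\bb{x},\bb{y}]$ under which every generator $f_i - y_i s_i$ of $J_\mon$ is homogeneous.

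I would then check that $\omega$-homogeneity is preserved by the subsequent constructions. In Construction \ref{cons:localcoherentequations}, $S$-polynomials of $\omega$-homogeneous binomials are $\omega$-homogeneous, and Gr\"obner reduction against the $\omega$-homogeneous set $\overline{G_\mon}$ preserves $\omega$-homogeneity, so $I_\mon \subseteq \field[\bb{y}]$ is $\omega$-homogeneous. In Construction \ref{cons:reduceRedundantVariables}, any relation $y_i - \prod_{j\neq i} y_j^{b_j} \in I_\mon$ forces $\omega(y_i) = \sum_j b_j \omega(y_j)$ by homogeneity, so each substitution $\Phi_i$ is a graded homomorphism onto the subring carrying the restricted, still strictly positive, weighting. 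Iterating, $I_\mon' \subseteq \field[y_i \mid i \in \rv]$ is $\omega$-homogeneous and every remaining $y_i$ has strictly positive $\omega$-weight.

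Finally, in a polynomial ring with a strictly positive grading every associated prime of a homogeneous ideal is homogeneous and therefore contained in the irrelevant maximal ideal. Applied to $I_\mon'$, this gives $\sqrt{\Iq_i} \subseteq \langle y_i \mid i \in \rv\rangle$ for each primary component, so $V(\Iq_i) = V(\sqrt{\Iq_i})$ contains the origin, and via $J_\mon'$ this origin is exactly the ideal $\mon$. The main obstacle is the bookkeeping needed to show that the two constructions preserve $\omega$-homogeneity; a minor subtlety is that $\omega$ is a priori only real-valued, so strictly speaking one works with an $\IR$-grading rather than an $\IN$-grading, but this is harmless for the homogeneity arguments. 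Note that coherence is used essentially: for non-coherent $\mon$ no strictly positive $\omega$ with $\init_\omega(I_\A) = \mon$ exists, so this approach does not extend verbatim to that case.
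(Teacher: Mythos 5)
Your proof is correct, but it follows a genuinely different route from the paper's. The paper argues by degeneration: it picks an arbitrary closed point $I$ of $V(\Iq_i)$, observes that the $n$-torus $T=(\field^*)^n$ acting on the $\bb{x}$-coordinates preserves the component $V$, uses the fact that $\{\bb{x}^{\bb{m}_j}-s_j\}$ is a reduced Gr\"obner basis of $I_\A$ for $\omega$ to deduce $\mon=\init_\omega(I)$, and then invokes \cite[Theorem 15.17]{Eisenbud:CommAlg} to place $\mon$ in the closure of the $T$-orbit of $I$, hence on $\overline{V(\Iq_i)}$. You instead transport the same coherence weight $\omega$ to a strictly positive grading on $\field[\bb{x},\bb{y}]$ via $\omega(y_i)=\omega(f_i)-\omega(s_i)>0$, verify that $J_\mon$, the S-polynomial reductions of Construction \ref{cons:localcoherentequations}, and the elimination of redundant variables in Construction \ref{cons:reduceRedundantVariables} all preserve $\omega$-homogeneity, and conclude that $I_\mon'$ is homogeneous; the standard fact that associated primes of a homogeneous ideal in a positively graded polynomial ring are homogeneous, hence lie in $\langle y_i\mid i\in\rv\rangle$, then finishes the argument. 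Both proofs hinge on coherence through the same $\omega$; yours replaces the flat-degeneration theorem by the homogeneity of associated primes and, as a by-product, shows that $V(I_\mon')$ is a cone with apex at the point corresponding to $\mon$, which is a slightly stronger conclusion than the one being proved. One small simplification is available: since $\mon$ is coherent, $\omega$ can always be taken rational, hence integral after clearing denominators, so the caveat about working with an $\IR$-grading is unnecessary.
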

		
		\begin{proof}
			Fix a primary ideal $\Iq_i$ and take a point $\mu\in V(\Iq_i)$. This gives an $\A$-graded ideal
			\[I=\left(J_{\mon}\right)_{\left(\bb{y}=\mu\right)}=\left\langle \left. \bb{x}^{\bb{m}_j} - p_j(\mu)s_j\,\right|\, j = 1,...,l\right\rangle\]
			lying on the component $V$ given by $\Iq_i$. Recall the action of the $n$-torus $T=\left(\field^*\right)^n$ on $S=\field[x_1,...,x_n]$ by
			\[\lambda.x_i=\lambda_i x_i\]
			for $\lambda \in T$ which maps $\A$-graded ideals to $\A$-graded ideals. Hence, $T$ acts on $\HA$ and the orbit of a point under the 
			$T$-action lies in the same irreducible component as the point. Thus, the $T$-orbit of $I$ lies in $V$. Furthermore, $\mon$ was coherent so that 
			there exists some $\omega \in \IN^n$ such that $\mon=\init_{\omega}\left(I_{\A}\right)$. Finally, because 
			$\left\{\left. \bb{x}^{\bb{m}_j}-s_j\,\right|\,j=1,...,l\right\}$ is the reduced Gröbner basis with respect to $\omega$ we get that
			\[\mon = \left\langle \left.\bb{x}^{\bb{m}_j} \,\right|\, j= 1,...,l\right\rangle \subseteq \init_{\omega}\left(I\right) \]
			which is in fact an equality because both ideals are $\A$-graded. This implies that $\mon$ lies in the closure of the $T$-orbit of $I$ by 
			\cite[Theorem 15.17]{Eisenbud:CommAlg} and thus it lies on $V$.
		\end{proof}
		
		So let the primary decomposition be
		\[ I_\mon' = \Iq_1 \cap ... \cap \Iq_k.\]
	  Then $V(\Iq_i)$ is isomorphic to an irreducible affine subset of $\HA$, in fact of $\UM$, containing $\mon$. In particular, the closed points of 
	  $V(\Iq_i)$ substituted for the $\bb{y}$ variables in $J_\mon'$ give exactly all $\A$-graded ideals in that component intersected with $\UM$ which are the 
	  closed points of that component. This gives us the following proposition:
	  
	  \begin{prop}\label{prop:coherentprimaryideals}
	  	Let $\mon$ be a coherent monomial $\A$-graded ideal with local equations $I_\mon'$ and universal family $J_\mon'$, both after removing 
	  	redundant variables. Let $I_\mon' = \Iq_1 \cap ... \cap \Iq_k$ be a primary decomposition. Then $\overline{V(\Iq_i)}\subseteq \HA$ 
	  	is an irreducible component containing $\mon$ for $i=1,...,k$ and one of them is the coherent component. Furthermore, 
	  	$\overline{V(\Iq_i)}$ is the coherent component if and only if $\Iq_i$ contains no monomials.
	  \end{prop}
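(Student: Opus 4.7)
The plan is to first recognize the closures $\overline{V(\Iq_i)}$ as the irreducible components of $\HA$ meeting the chart $\UM$, and then to single out the coherent one by the absence of monomials in its primary ideal.

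By Remark \ref{rem:IMintoJM}, $\UM$ is cut out (up to nilpotents) by $I_\mon'$ in $\Spec\field[y_i \mid i \in \rv]$, so the minimal primary decomposition $I_\mon' = \bigcap \Iq_i$ gives $(\UM)_{\mathrm{red}} = \bigcup V(\Iq_i)$. Each $V(\Iq_i)$ is irreducible since $\Iq_i$ is primary, and each contains $\mon$ by the preceding lemma. Because $\UM$ is an open neighbourhood of $\mon$ in $\HA$, the closures $\overline{V(\Iq_i)}$ are exactly the irreducible components of $\HA$ passing through $\mon$. Since $\mon$ is coherent, Theorem \ref{thm:theCoherentComponent} places it on the coherent component $W$, and the identification above forces $W = \overline{V(\Iq_{i_0})}$ for a unique index $i_0$.

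For the characterization I would exploit coherence to fix a weight $\omega$ with $\mon = \init_\omega(I_\A)$, so that $\{\bb{x}^{\bb{m}_j} - s_j\}$ is the reduced Gr\"obner basis of $I_\A$. Then $I_\A$ itself is the specialization of $J_\mon$ at $\bb{y} = (1,\ldots,1)$, and more generally every $T$-translate $\mu.I_\A$ is the specialization at a tuple of nonzero $\bb{y}$-values depending explicitly on $\mu \in T$. In the forward direction, $I_\A \in V(\Iq_{i_0})$ sits at a point with all $y_j \neq 0$, so no $y_j$ lies in $\sqrt{\Iq_{i_0}}$; primariness of $\Iq_{i_0}$ then forbids any monomial in $\Iq_{i_0}$. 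Conversely, if $\Iq_i$ contains no monomial then primariness gives $y_j \notin \sqrt{\Iq_i}$ for every $j$, so $U := V(\Iq_i) \cap D(y_1 \cdots y_l)$ is a nonempty dense open subset of $V(\Iq_i)$. Each closed point of $U$ corresponds via $J_\mon'$ to an ideal of the form $\langle \bb{x}^{\bb{m}_j} - \lambda_j s_j \rangle$ with all $\lambda_j \neq 0$; I would argue that any such ideal is a $T$-translate of $I_\A$ and hence lies on $W$. Irreducibility then gives $V(\Iq_i) \subseteq W$, and since $\overline{V(\Iq_i)}$ is already a component we conclude $\overline{V(\Iq_i)} = W$, so $i = i_0$.

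The main obstacle is precisely the ``every such ideal is a $T$-translate'' step in the converse: one must know that after removing redundant variables the remaining $\bb{y}$-coordinates on $V(\Iq_{i_0})$ faithfully parametrize the $T$-orbit of $I_\A$, rather than merely containing it. I expect this to reduce to verifying that Construction \ref{cons:reduceRedundantVariables} extracts exactly the lattice relations among the exponent vectors $\bb{m}_j - \bb{n}_j \in \Ker(\A)$ (where $\bb{n}_j$ is the exponent of $s_j$), which should be readable off the explicit binomial form of $I_\mon$ provided by Lemma \ref{lemma:UMring} and Construction \ref{cons:localcoherentequations}.
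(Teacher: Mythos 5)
Your treatment of the first assertion (that the $\overline{V(\Iq_i)}$ are the irreducible components of $\HA$ through $\mon$, and that one of them is the coherent component) matches the paper's: Remark \ref{rem:IMintoJM} plus Lemma \ref{lem:localChartcontainsTorus} give that $V(\Iq_i)$ is a dense open subset of a component, and coherence of $\mon$ forces one of these to be the coherent component. Your forward direction of the characterization (no $y_j$ vanishing at the point of $I_\A$, so no $y_j$ in the radical, so primariness forbids monomials) is a correct if slightly roundabout variant of what the paper does.

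The genuine problem is in your converse, and the fix you sketch does not close it. You reduce to the claim that every $\A$-graded ideal of the form $\langle \bb{x}^{\bb{m}_j}-\lambda_j s_j\rangle$ with all $\lambda_j\neq 0$ arising from $V(\Iq_i)\cap D(y_1\cdots y_l)$ is a $T$-translate of $I_\A$. You then propose to verify that Construction \ref{cons:reduceRedundantVariables} ``extracts exactly the lattice relations among the $\bb{m}_j-\bb{n}_j$''. But it does not, and cannot in general: removing redundant variables only eliminates relations of the shape $y_i - \prod y_j^{b_j}$, while the residual lattice relations (such as the $y_7^2-y_3y_9$ appearing in the $\A=\{1,3,4,7\}$ example at the end of the paper) remain in $I_\mon'$, and they live precisely in the coherent primary ideal $\Iq_{i_0}$. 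If one is trying to show that a hypothetical $\Iq_i$ with no monomials must equal $\Iq_{i_0}$, one cannot assume the points of $V(\Iq_i)\cap D(\prod y_j)$ already satisfy the relations singling out the image of $T$ --- that is exactly what needs to be established, so the argument as framed is circular.

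The simplest escape, which is also what the paper does, is to use the one point of $D(\prod y_j)$ that unconditionally lies on $V(\Iq_i)$: the point $(1,\dots,1)$. If $\Iq_i$ contains no monomial, its radical is a binomial prime with no monomials, hence a lattice ideal generated by differences $\bb{y}^{\bb{a}}-\bb{y}^{\bb{b}}$, each of which vanishes at $(1,\dots,1)$; so $(1,\dots,1)\in V(\Iq_i)$. This point corresponds under $J_\mon'$ not to a translate of $I_\A$ but to $I_\A$ itself, because $\{\bb{x}^{\bb{m}_j}-s_j\}$ is the reduced Gr\"obner basis of $I_\A$ for a term order with initial ideal $\mon$. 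Since by Theorem \ref{thm:theCoherentComponent} the toric ideal lies on a unique component, this forces $\overline{V(\Iq_i)}$ to be the coherent component. No analysis of generic fibres or of the image of the monomial map $T\to (\field^*)^l$ is needed.
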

	  
	  \begin{proof}
	  	The first statement follows from Remark \ref{rem:IMintoJM} and that $V(\Iq_i)$ is an affine open subset of an irreducible component containing $\mon$, 
	  	which by Lemma \ref{lem:localChartcontainsTorus} is dense, so that $\overline{V(\Iq_i)}$ is in fact the component. Furthermore, $\mon$ is coherent so 
	  	one component must be the coherent one. For the last part, since $\Iq_i$ is generated by binomial differences, it contains no monomials 
	  	exactly if $(1,...,1)$ is in $V(\Iq_i)$. But this point corresponds to the $\A$-graded ideal
	  	\[\left\langle \bb{x}^{\bb{m}_1}-s_1,...,\bb{x}^{\bb{m}_l}-s_l\right\rangle,\]
	  	which is the toric ideal $I_{\A}$, because $\left\{\bb{x}^{\bb{m}_1}-s_1,...,\bb{x}^{\bb{m}_l}-s_l\right\}$ is a Gröbner basis of $I_{\A}$ with respect to 
	  	a term order giving $\mon$ as initial ideal. Thus $V(\Iq_i)$ contains the orbit of $I_{\A}$ under the action of the torus $T=(\field^*)^n$ which is the 
	  	torus of the coherent component. Since $I_{\A}$ only lies on the coherent component, the closure of $V(\Iq_i)$ is the coherent component.
	  \end{proof}
	  
	  Since the coherent component is already completely described by the state polytope of the toric ideal, we can ignore the primary ideal that corresponds to the 
	  coherent component and just consider the remaining $\Iq_j$'s containing at least one monomial generator.
	  
	  Now assume that $\mon$ is a non-coherent $\A$-graded ideal and we have computed $J_\mon$ and the local equations $I_\mon$ using Construction 
	  \ref{cons:localincoherentequations}.
	  
	  \begin{df}
	  	Let $\mon$ be a non-coherent monomial $\A$-graded ideal. Then we call the ideal
			\[J_\mon = \left\langle \bb{x}^{\bb{m}_1} - y_1\cdot s_1,...,\bb{x}^{\bb{m}_l} - y_l\cdot s_l \right\rangle\]
			from Construction \ref{cons:localincoherentequations} the \emph{universal family} of $\UM$ with \emph{defining ideal} $I_\mon$.
		\end{df}
		
		Remember that this time $I_\mon$ only describes the local ring of the toric Hilbert scheme at $\mon$, \textit{i.e.}
	  \[\IO_{\HA,[\mon]} \cong \field[Z]_{\left\langle Z\right\rangle}/F \cong \field[y_1,...,y_{p_\mon}]_{\left\langle y_1,...,y_{p_\mon}\right\rangle}/I_\mon.\]
	  Now we are interested in the components of $\UM$ that contain $\mon$. 
	  
	  So in our case we have $\UM = \Spec\left(\field[Z]/F\right)$, but we do not know $F$. Instead we have 
	  $\field[Z]_{\left\langle Z\right\rangle}/F\cong \field[\bb{y}]_{\left\langle \bb{y}\right\rangle}$. Recall
	  \[\renewcommand{\arraystretch}{1.35}\begin{array}{rl}
				G := & \left\langle \left.\bb{x}^{\bb{m}} - \xi_{\bb{m}}^{\bb{a}}\cdot s_{\bb{a}} 
												\,\right|\, \bb{a} \in \PD, \deg(\bb{x}^{\bb{m}})=\bb{a}, \bb{x}^{\bb{m}}\neq s_{\bb{a}}\right\rangle\subseteq \field[Z]\otimes_{\field}S \quad \textrm{and}\\
				F := & \sum_{\bb{a} \in \IN\A} \Fitt_0\left(\left( \field[Z][x_1,...,x_n]/G\right)_{\bb{a}}\right)\subseteq \field[Z].
			\end{array}\renewcommand{\arraystretch}{1}\]
		from Lemma \ref{lemma:UMring}. Note that each $y_i$ is one of the variables $\xi^{\bb{a}}_{\bb{m}} \in Z$. We denote the subset of these variables by $Z_{small}\subseteq Z$. 
		If we take some $\bb{x}^{\bb{m}} - \xi_{\bb{m}}^{\bb{a}}\cdot s_{\bb{a}}\in G$ with $\xi_{\bb{m}}^{\bb{a}} \notin Z_{small}$, then there is a reduction of 
		$\bb{x}^{\bb{m}} - \xi_{\bb{m}}^{\bb{a}}\cdot s_{\bb{a}}$ by $\overline{G_{\mon}}$ to
		\[\left(R\left(\bb{x}^{\bb{m}},\overline{G_{\mon}}\right)-\xi_{\bb{m}}^{\bb{a}}\right)\cdot s_{\bb{a}}\]
		as in Construction \ref{cons:localincoherentequations}, where $R\left(\bb{x}^{\bb{m}},\overline{G_{\mon}}\right)$ is a monomial in $Z_{small}$ which might be zero. Set
		\[Z_{red} := \left\{\left. \xi_{\bb{m}}^{\bb{a}} - R\left(\bb{x}^{\bb{m}},\overline{G_{\mon}}\right)\right| \xi_{\bb{m}}^{\bb{a}} \notin Z_{small}\right\}.\]
		Note that $Z_{red}\subseteq F$ and $\field[Z]/Z_{red}\cong \field[Z_{small}]\cong \field[y_1,...,y_{p_{\mon}}]$. Hence, there is an ideal $F'\subseteq \field[\bb{y}]$ 
		such that
		\[\field[Z]/F\cong \field[\bb{y}]/F'.\]
		But this means we get on the one hand
		\[\UM = \Spec\left(\field[\bb{y}]/F'\right)\]
		and on the other hand
		\begin{eqnarray*}
			\field[\bb{y}]_{\left\langle \bb{y} \right\rangle}/F' & \cong & \field[Z]_{\left\langle Z\right\rangle}/F\\
																														& \cong & \field[\bb{y}]_{\left\langle\bb{y}\right\rangle}/I_{\mon},
		\end{eqnarray*}
		where the isomorphism between $\field[\bb{y}]_{\left\langle \bb{y} \right\rangle}/F'$ and $\field[\bb{y}]_{\left\langle\bb{y}\right\rangle}/I_{\mon}$ is the identity. However, recall that
	  \[I_{\mon}=\left\langle r(u,v)\,\left|\, u,v\in \overline{G_\mon}\right.\right\rangle\subseteq \field[\bb{y}]_{\left\langle \bb{y}\right\rangle}\]
	  and that in fact $r(u,v)\in \field[\bb{y}]$ by construction. Thus, set the ideal
	  \[\widetilde{I_{\mon}}:=\left\langle r(u,v)\,\left|\, u,v\in \overline{G_\mon}\right.\right\rangle\subseteq \field[\bb{y}]\]
	  and the multiplicatively closed set $\mathcal{S} = \field[\bb{y}]\setminus \left\langle\bb{y}\right\rangle$. Then we have
	  \[\mathcal{S}^{-1}\widetilde{I_{\mon}}=I_{\mon}=\mathcal{S}^{-1}F'.\]
	  Now let
	  \[\widetilde{I_{\mon}}=\bigcap_{i=1}^k \Iq_i\]
	  be a minimal primary decomposition in $\field[\bb{y}]$ with prime ideals $\Ip_i=\sqrt{\Iq_i}$. Note that, although $\widetilde{I_{\mon}}$ is similar to $I_{\mon}$ in the 
	  coherent case before, the primary decomposition of it does not only give the components containing $\mon$. Therefore, we have to distinguish them further.
	  
	  Assume that $\Iq_i\cap \mathcal{S}=\emptyset$ for $i=1,...,m$ and  $\Iq_i\cap \mathcal{S}\neq \emptyset$ for $i=m+1,...,k$ for some $m$. Then by using 
	  \cite[Proposition 4.8 + 4.9]{AtiyahMacdonald:CommAlg} we have that
	  \[I_\mon = \bigcap_{i=1}^m \mathcal{S}^{-1}\Iq_i\quad \textrm{and}\quad I_{\mon} \cap \field[\bb{y}]=\bigcap_{i=1}^m \Iq_i\]
	  are minimal primary decompositions. On the other hand,
	  \[I_\mon\cap \field[\bb{y}] = \bigcup_{s\in \mathcal{S}} \left(\widetilde{I_{\mon}}:s\right) = \bigcup_{s\in \mathcal{S}} \left(F':s\right)\]
	  is the saturation of $I_\mon$ considered as an ideal in $\field[\bb{y}]$ with respect to $\field[\bb{y}]\setminus \left\langle\bb{y}\right\rangle$ and moreover the saturation of $F'$. 
	  But the latter are the functions that do not vanish on the point $\left(\bb{0}\right)$, the point that corresponds to $\mon$. Hence, 
	  $V\left(I_\mon\cap \field[\bb{y}]\right) \subseteq \UM$ is the intersection of $\UM$ with all components of $\HA$, that contain $M$. 
	  Thus, $V\left(\Iq_i \cap \field[\bb{y}]\right)\subseteq \UM$ is isomorphic to an irreducible 
	  subset of $\UM$ containing $\mon$ and all $(\Iq_i \cap \field[\bb{y}])$ give exactly the reduced components of $\HA$, that contain $\mon$, 
	  intersected with $\UM$. Note that again there might be embedded components.
	  
	  These primary ideals give the following description of $\UM$.
	  
	  \begin{prop}\label{prop:incoherentprimaryideals}
	  	Let $\mon$ be a non-coherent monomial $\A$-graded ideal with universal family $J_\mon'$ and corresponding defining ideal $I_\mon'$, both after removing 
	  	redundant variables. Let $I_\mon' = \Iq_1 \cap ... \cap \Iq_k$ be the primary decomposition in $\field[y_i \,|\, i \in \rv]$. Then 
	  	$\overline{V(\Iq_j)}\subseteq \HA$ is an irreducible component containing $\mon$ if and only if none of the generators of $\Iq_j$ is 
	  	a unit in the localisation $\field[y_i \,|\, i \in \rv]_{\left\langle y_i|i\in\rv\right\rangle}$.
	  \end{prop}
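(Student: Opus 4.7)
The plan is to translate the algebraic unit condition into a geometric vanishing condition, and then invoke the discussion immediately preceding the proposition which already did the heavy lifting of relating primary components of $\widetilde{I_\mon}$ (and hence of $I_\mon'$) to components of $\HA$ containing $\mon$.

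First, I would show that the condition "no generator of $\Iq_j$ is a unit in the localization" is equivalent to $\Iq_j \cap \mathcal{S}' = \emptyset$, where $\mathcal{S}' := \field[y_i \mid i \in \rv] \setminus \langle y_i \mid i \in \rv\rangle$. Indeed, units in the localization are precisely the elements of $\mathcal{S}'$, and since $\langle y_i \mid i \in \rv\rangle$ is an ideal, saying every generator lies in it is equivalent to saying every element of $\Iq_j$ lies in it. Geometrically, this says that $\bb{0} \in V(\Iq_j)$, i.e.\ $\mon \in V(\Iq_j)$ under the identification of $\UM$ with $\Spec(\field[y_i \mid i\in \rv]/F')$ (where the isomorphism from Construction \ref{cons:reduceRedundantVariables} transports the discussion of $\widetilde{I_\mon}$ to $I_\mon'$).

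Next I would invoke the analysis preceding the proposition: via \cite[Propositions 4.8 + 4.9]{AtiyahMacdonald:CommAlg}, $I_\mon' \cap \field[y_i \mid i\in\rv] = \bigcap_{\Iq_j\cap\mathcal{S}'=\emptyset}\Iq_j$ is a minimal primary decomposition, and $V(I_\mon' \cap \field[y_i\mid i\in\rv])$ is exactly the intersection of $\UM$ with the union of all components of $\HA$ containing $\mon$. Hence the primary components $\Iq_j$ with $\Iq_j \cap \mathcal{S}' = \emptyset$ cut out irreducible closed subsets of $\UM$ sitting inside components of $\HA$ through $\mon$. Finally, by Lemma \ref{lem:localChartcontainsTorus} each such component $V$ has its ambient torus contained in $\UM \cap V$, which is therefore dense in $V$; combined with the minimality of the primary decomposition this forces each $V(\Iq_j)$ to be dense in (and match up bijectively with) exactly one such component, so $\overline{V(\Iq_j)} \subseteq \HA$ is that component. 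For the converse, if some generator is a unit then $\mon \notin V(\Iq_j)$, and openness of $\UM$ in $\HA$ gives $\overline{V(\Iq_j)} \cap \UM = V(\Iq_j)$, so $\mon \notin \overline{V(\Iq_j)}$.

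The main obstacle is the last synthesis step: ruling out that an $\Iq_j$ with $\Iq_j\cap\mathcal{S}'=\emptyset$ comes from an embedded primary, in which case $\overline{V(\Iq_j)}$ would sit properly inside another component rather than being one itself. This needs a careful appeal to the minimality of the primary decomposition and to the fact that distinct irreducible components of $\HA$ have disjoint ambient tori (Lemma \ref{lem:diagonaltorusaction}), so that the irreducible pieces meeting $\UM$ are in bijection with the non-embedded $\Iq_j$'s satisfying the vanishing condition, with no room left for embedded contributors among those containing $\mon$.
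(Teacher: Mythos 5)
Your main argument follows essentially the same route as the paper: translate the ``no generator is a unit'' condition to $\Iq_j\cap\mathcal{S}=\emptyset$ (equivalently $\mon\in V(\Iq_j)$), invoke the preceding analysis via \cite[Propositions 4.8 + 4.9]{AtiyahMacdonald:CommAlg} to identify those $\Iq_j$ with the primary ideals surviving localisation at $\left\langle\bb{y}\right\rangle$, and then use Lemma~\ref{lem:localChartcontainsTorus} to promote the affine piece $V(\Iq_j)\subset\UM$ to the full component $\overline{V(\Iq_j)}$. That is exactly the paper's proof.

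One caveat about your closing ``main obstacle'' paragraph: the concern about embedded primaries is a genuine subtlety, but the fix you sketch is incorrect. It is not true that distinct irreducible ``components'' in this sense have disjoint ambient tori, and Lemma~\ref{lem:diagonaltorusaction} does not give that (it only says any torus acting on a reduced component acts diagonally). Indeed, the paper's own continuation of Example~\ref{ex:The2by6example} in Section~4 produces two associated primes $\Ip_1,\Ip_2$ with $V_{\Ip_1}$ an \emph{embedded} component inside $V_{\Ip_2}$; both satisfy the unit condition, so the statement as written is being used with ``irreducible component'' in a loose sense that tolerates embedded components (the paper flags this explicitly with ``Note that again there might be embedded components''). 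So you should drop that proposed repair; the cleanest reading is that the proposition identifies exactly the $\Iq_j$ contributing to $\left(I_\mon'\right)_{\left\langle\bb{y}\right\rangle}$, with no claim of maximality among them.
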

	  
	  \begin{proof}
	  	First of all note that removing redundant variables still maps $I_\mon$ to an isomorphic description in the local ring since
	  	\[y_i - \prod_{j\neq i} y_j^{b_j}\]
	  	is not a unit in $\field[\bb{y}]_{\left\langle\bb{y}\right\rangle}$.
	  	Let $\Iq_i$ be one of the primary ideals in a minimal primary decomposition of $I_\mon$ in $\field[\bb{y}]$ with prime ideal 
	  	$\Ip_j = \sqrt{\Iq_j}$ and denote the multiplicatively closed set $\mathcal{S} := \field[y_i\,|\, i \in \rv]\setminus \left\langle y_i|i\in\rv\right\rangle$. Then $V(\Iq_j)$ 
	  	is the intersection of a reduced irreducible component containing $\mon$ with $\UM$ if and only if $\Ip_j$ is an associated prime of 
	  	$\bigcup_{s\in \mathcal{S}} \left(I_\mon:s\right)$. But by using the above, 
	  	$\Ip_j$ is an associated prime of $\bigcup_{s\in \mathcal{S}} \left(I_\mon:s\right)$ exactly if none of the generators of $\Iq_j$ is a 
	  	unit in $\mathcal{S}$. Again, by Lemma \ref{lem:localChartcontainsTorus} the closure $\overline{V(\Iq_j)}$ is an irreducible component of $\HA$.
	  \end{proof}
	  
	  \begin{remnn}
	  	Note that all components of $\UM$ containing $\mon$ are in fact non-coherent and therefore all primary ideals contain a monomial generator.
	  \end{remnn}
	  
	  From now on the construction is the same for coherent and non-coherent monomial ideals, since the primary ideals in Propositions 
	  \ref{prop:coherentprimaryideals} and \ref{prop:incoherentprimaryideals} giving non-coherent components have exactly the same properties.
	  We want to construct the polytope that defines the reduced underlying structure of a non-coherent component, so we fix one of these primary ideals 
	  $\Iq_i$ and take the radical $\Ip_i := \sqrt{\Iq_i}$. Then $V(\Ip_i)$ is an affine open chart of the underlying reduced scheme of the non-coherent 
	  component given by $\Iq_i$. To be more precise, it is isomorphic to an affine open chart, where the isomorphism is given by the universal 
	  family $J_\mon'$. In other words, if the $\bb{y}$'s in $J_\mon'$ are considered as coefficients, then those coefficients satisfying $\Ip_i$ give exactly the 
	  $\A$-graded ideals that correspond to the points of that component. 
	  
	  \begin{df}\label{df:ComponentofAnAssociatedPrime}
	  	Let $\Ip$ be an associated prime of $I_\mon'$ with corresponding primary ideal $\Iq$ and irreducible component $\overline{V(\Iq)}\subseteq \HA$ 
	  	as in Proposition \ref{prop:coherentprimaryideals} or \ref{prop:incoherentprimaryideals}. Then 
	  	we denote the reduced scheme of the corresponding component by 
	  	\[V_{\Ip}:=\left(\overline{V(\Iq)}\right)_{\textrm{red}}\subset \left(\HA\right)_{\textrm{red}}.\]
	  \end{df}
	  
	  \begin{rem}
	  	Since $\Ip =\sqrt{\Iq}$ we have
	  	\[V_{\Ip} = \overline{V(\Ip)} \subset \left(\HA\right)_{\textrm{red}}.\]
	  \end{rem}
	  
	  We now give a construction via torus invariant isomorphisms to get a universal 
	  family $J_\mon(\Ip)$ that gives an open affine chart of the component for all values of the remaining $\bb{y}$-variables. This means we will perform 
	  a change of coordinates on the $\bb{y}$'s in $J_{\mon}'$ and $\Ip$, that makes $\Ip$ trivial. Since the solutions of $\Ip$ give all $\A$-graded 
	  ideals in the torus of that component we will get a new universal family where every set of values for $\bb{y}$ gives a point on that component.
	  
	  Since $\Ip$ is a binomial prime ideal, a minimal generating set is of the form
	  \[\Ip = \left\langle y_i,\bb{y}^{\bb{b}^+}-\bb{y}^{\bb{b}^-}\right\rangle\]
	  for some $i \in \rv$ and some $\bb{b}^+,\bb{b}^- \in \IN^{\rv}$. Then the first step is to remove the $y_i$'s in $\Ip$ and $J_{\mon}'$ by just setting 
	  them to zero. 
	  
	  \begin{cons}[\textbf{Removing single variables}]\label{cons:removeSingleVariables}
	  	Let $\mon$ be a monomial $\A$-graded ideal and $\Ip$ an associated prime of $I_{\mon}'$. Denote by $\rv'$ the indices of the variables, that are not contained in 
	  	$\Ip$, and by $J_{\Ip}$ the set of all $j\in \{1,...,p_{\mon}\}$ such that $y_i\not|\; p_j(\bb{y})$ for all $y_i \in \Ip$, \textit{i.e.} all indices where $p_j(\bb{y})$ 
	  	remains unchanged and is not set to zero.
	  	Then we remove the single variables by applying the map
	  	\[\Psi : \field[y_i\,|\, i \in \rv] \rightarrow \field[y_i\,|\, i \in \rv'],\, y_ j \mapsto \left\{\begin{array}{cc} 
																					y_j & \textrm{if } j \in \rv'\\ 
																					0 & \textrm{if } j \notin \rv'
																						\end{array}\right.\]
			to $\Ip$, where we get
			\[\Ip' := \Psi(\Ip) = \left\langle \bb{y}^{\bb{b}^+}-\bb{y}^{\bb{b}^-}\right\rangle,\]
	  	and by applying $\Id_{\field[\bb{x}]} \otimes \Psi$ to $J_{\mon}'$, where we get 
	  	\[J_{\mon}'' := \Id_{\field[\bb{x}]} \otimes \Psi(J_{\mon}') = \left\langle \left.\bb{x}^{\bb{m}_j}-\bb{y}^{\bb{b}_j}\cdot s_j\,\right|\, j \in J_{\Ip}\right\rangle + 
	  							\left\langle \left.\bb{x}^{\bb{m}_j}\,\right|\,j\in J_{\Ip}\right\rangle\]
	  	in $\field[\bb{x},y_i\,|\, i\in \rv']$, for some $\bb{b}_j \in \IN^{\rv'}, j \notin J_{\Ip}$.
	  \end{cons}
	  
	  \begin{rem}\label{rem:localaffinechart}
	  	For the open affine chart of $V_{\Ip}$ containing $\mon$ given by $\UM$ we get the isomorphism $V_{\Ip}\cap (\UM)_{\textrm{red}} \cong \Spec\left(\field[y_i\,|\, i \in \rv']/\Ip'\right)$, 
	  	where $(\UM)_{\textrm{red}}$ is the underlying reduced scheme of $\UM$.
	  \end{rem}
	  
	  Because $\Ip'$ is prime, if looking at a generator $\bb{y}^{\bb{b}^+}-\bb{y}^{\bb{b}^-}$, the difference of the exponent vectors $\bb{b}:=\bb{b}^+ - \bb{b}^-$ is coprime. 
	  Hence, there is an isomorphism $A \in \Gl(\rv',\IZ)$ such that $A\cdot\bb{b} = e_1$, the first vector of the canonical basis. This is equivalent to a 
	  torus invariant isomorphism
	  \[\Phi_A : \field\hspace{-3pt}\left[\left.y_i^{\pm 1} \,\right|\, i \in \rv'\right] \rightarrow \field\left[\left.y_i'^{\pm 1} \,\right|\, i \in \rv'\right],\quad y_i \mapsto \bb{y}'^{A_i},\]
	  where $A_i$ denotes the $i$-th column of $A$. This means that on the spectrum of these rings $\Phi_A$ gives an isomorphism on their tori. Using $\Phi_A$, we can 
	  map $\Ip'$ to some prime ideal $\Phi_A(\Ip')$ in $\field[y_i' \,|\, i \in \rv']$ by sending the binomial $\bb{y}^{\bb{b}^+}-\bb{y}^{\bb{b}^-}$ with 
	  $\bb{b}=\bb{b}^+ - \bb{b}^-$ to the binomial
	  \[ \bb{y}'^{A(\bb{b})^+}-\bb{y}'^{A(\bb{b})^-},\]
	  which differs only by a unit from
	  \[ \bb{y}'^{(A\bb{b})^+}-\bb{y}'^{(A\bb{b})^-},\]
	  where $A\bb{b}=(A\bb{b})^+ - (A\bb{b})^-$ is the unique decomposition into two positive vectors. If we extend $\Phi_A$ by the identity on the $x_i$, we can apply 
	  it to the altered universal family $J_\mon''=\left\langle \left.\bb{x}^{\bb{m}_j}- \bb{y}^{\bb{b}_j}\cdot s_j\,\right|\, j \in J_{\Ip}\right\rangle 
	  					+ \left\langle \left.\bb{x}^{\bb{m}_j} \,\right|\, j \notin J_{\Ip}\right\rangle$ to get
	  \[\Phi_A(J_\mon''):=\left\langle \left.\bb{x}^{\bb{m}_j}- \bb{y}'^{A\bb{b}_j}\cdot s_j\,\right|\, j \in J_{\Ip}\right\rangle + 
	  				\left\langle \left.\bb{x}^{\bb{m}_j} \,\right|\, j \notin J_{\Ip}\right\rangle\]
	  in $\field[\bb{x},\bb{y}'^{\pm 1}]$. Here the $\bb{y}'$ terms have become Laurent monomials, as they might have negative exponents.
	  
	  \begin{cons}[\textbf{Change of coordinates}]\label{cons:Isomorphism}
	  	Let $\bb{y}^{\bb{b}^+}-\bb{y}^{\bb{b}^-} \in \Ip'$ be an element of a minimal generating set. Fix a matrix $A \in \Gl(\rv',\IZ)$ with torus invariant morphism 
	  	\[\Phi_A : \field\left[\left.y_i^{\pm 1} \,\right|\, i \in \rv'\right] \rightarrow \field\left[\left.y_i'^{\pm 1} \,\right|\, i \in \rv'\right],\quad y_i \mapsto \bb{y}'^{A_i},\]
	  	such that $\Phi_A(\bb{y}^{\bb{b}^+}-\bb{y}^{\bb{b}^-})=y_1'-1$. Then compute the new universal family
	  	\[\Phi_A(J_\mon''):=\left\langle \left.\bb{x}^{\bb{m}_j}- \bb{y}'^{A\bb{b}_j}\cdot s_j\,\right|\, j \in J_{\Ip}\right\rangle + 
	  				\left\langle\left. \bb{x}^{\bb{m}_j} \,\right|\, j \notin J_{\Ip}\right\rangle\]
	  	where we set $y_1'$ to be $1$ and the new prime ideal
	  	\[\Phi_A(\Ip')=\left\langle\left. \bb{y}'^{(A\bb{b})^+}-\bb{y}'^{(A\bb{b})^-}\,\right|\, \bb{y}^{\bb{b}^+}-\bb{y}^{\bb{b}^-}\in\Ip' \right\rangle\]
	  	from which we also remove $y_1'-1$ since it has become zero.
	  \end{cons}
	  
	  \begin{lem}\label{lem:sameComponent}
	  	Let $J_\mon = \left\langle\left. \bb{x}^{\bb{m}_j}- \bb{y}^{\bb{b}_j}\cdot s_j\,\right|\, j \in J_{\Ip}\right\rangle 
	  					+ \left\langle\left. \bb{x}^{\bb{m}_j} \,\right|\, j \notin J_{\Ip}\right\rangle$ be a universal family of a local chart of $\HA$ and 
	  	$\Ip \subseteq \field[y_i \,|\, i \in \rv]$ be a binomial prime ideal with no monomial generators that gives a reduced irreducible component on this chart. 
	  	For a generator $\bb{y}^{\bb{b}^+}-\bb{y}^{\bb{b}^-}$ of $\Ip$ choose an isomorphism $\Phi_A$ as above and set the universal family 
	  	$\Phi_A(J_\mon)$ in $\field[\bb{x},\bb{y}'^{\pm 1}]$ and the prime ideal $\Phi_A(\Ip) \in \field[y_i' \,|\, i \in \rv]$ as before. Then the prime ideal 
	  	$\Phi_A(\Ip)$ gives the intersection of the same irreducible component with its ambient torus.
	  \end{lem}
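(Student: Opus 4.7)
The plan is to exploit the fact that the matrix $A\in\Gl(\rv',\IZ)$ encodes an isomorphism of algebraic tori, and then verify that this isomorphism intertwines the two parametrisations coming from $J_{\mon}$ and $\Phi_A(J_{\mon})$. So the two universal families produce exactly the same set of $\A$-graded ideals, and hence cut out the same torus-orbit inside $\HA$.

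First I would record the torus side of $\Phi_A$. Since $A\in\Gl(\rv',\IZ)$, the ring homomorphism $\Phi_A:\field[y_i^{\pm 1}]\to\field[y_i'^{\pm 1}]$, $y_i\mapsto \bb{y}'^{A_i}$ is an isomorphism, and passing to spectra gives an isomorphism of tori $\varphi_A:(\field^*)^{\rv'}\to(\field^*)^{\rv'}$ which on closed points sends $\bb{\mu}\mapsto \bb{\lambda}$ with $\lambda_i:=\bb{\mu}^{A_i}$. By construction of $\Phi_A(\Ip)$, $\bb{\mu}$ satisfies $\Phi_A(\Ip)$ if and only if $\bb{\lambda}=\varphi_A(\bb{\mu})$ satisfies $\Ip$, so $\varphi_A$ restricts to an isomorphism between $V(\Ip)\cap(\field^*)^{\rv'}$ and $V(\Phi_A(\Ip))\cap(\field^*)^{\rv'}$.

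Next I would check that the two parametrisations of $\A$-graded ideals agree under $\varphi_A$. For $j\in J_{\Ip}$ the generator of $\Phi_A(J_{\mon})$ evaluated at $\bb{y}'=\bb{\mu}$ is
\[
\bb{x}^{\bb{m}_j}-\bb{\mu}^{A\bb{b}_j}\cdot s_j
\;=\;\bb{x}^{\bb{m}_j}-\prod_i(\bb{\mu}^{A_i})^{(\bb{b}_j)_i}\cdot s_j
\;=\;\bb{x}^{\bb{m}_j}-\bb{\lambda}^{\bb{b}_j}\cdot s_j,
\]
which is exactly the corresponding generator of $J_{\mon}$ evaluated at $\bb{y}=\bb{\lambda}$. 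The generators indexed by $j\notin J_{\Ip}$ are the pure monomials $\bb{x}^{\bb{m}_j}$ in both families, so the two evaluations produce identical $\A$-graded ideals. Therefore, under the natural maps to $\HA$ provided by Remark \ref{rem:IMintoJM}, the image of $V(\Ip)\cap(\field^*)^{\rv'}$ through $J_{\mon}$ and the image of $V(\Phi_A(\Ip))\cap(\field^*)^{\rv'}$ through $\Phi_A(J_{\mon})$ are the same subset of $\HA$.

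Finally I would conclude by combining this with Lemma \ref{lem:localChartcontainsTorus}. By that lemma, this common image is dense in the reduced irreducible component $V_{\Ip}$ associated with $\Ip$, since the original parametrisation already yields an open dense subset of the ambient torus of $V_{\Ip}$. The image under $\Phi_A(J_{\mon})$ of $V(\Phi_A(\Ip))\cap(\field^*)^{\rv'}$ is therefore an open dense subset of the ambient torus of the same component. I do not anticipate a serious obstacle; the only care needed is to track the column-versus-row convention in the identity $\bb{\mu}^{A\bb{b}_j}=\bb{\lambda}^{\bb{b}_j}$ and to note that the extra generators $\bb{x}^{\bb{m}_j}$ for $j\notin J_{\Ip}$ are left untouched by $\Phi_A$, so the construction in Lemma \ref{cons:Isomorphism} indeed changes nothing on the level of the geometric points of the component.
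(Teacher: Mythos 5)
Your proof is correct and takes essentially the same approach as the paper's: use the fact that $\Phi_A$ is a torus isomorphism to identify $V(\Ip)$ with $V(\Phi_A(\Ip))$, and check that the universal-family parametrisations intertwine under this isomorphism. The paper's own proof is terser and leaves the identity $\bb{\mu}^{A\bb{b}_j}=\bb{\lambda}^{\bb{b}_j}$ (where $\lambda_i=\bb{\mu}^{A_i}$) implicit, whereas you verify it explicitly; this is a worthwhile elaboration but not a different argument.
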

	  
	  \begin{proof}
	  	Consider $\Phi_A$ on the Laurent polynomials of both rings:
	  	\[\Phi_A : \field\left[\left.y_i^{\pm 1} \,\right|\, i \in \rv\right] \rightarrow \field\left[\left.y_i'^{\pm 1} \,\right|\, i \in \rv\right],\quad y_i \mapsto \bb{y}'^{A_i}\]
	  	This induces the second isomorphism of 
	  	\[V(\Ip)\cong \field\left[\left.y_i^{\pm 1}\,\right|\, i \in \rv\right]/\Ip \cong \field\left[\left.y_i'^{\pm 1} \,\right|\, i \in \rv\right]/\Phi_A(\Ip).\]
	  	This means that the points $\left(\lambda_i\right)_{i\in\rv}$ in $V\left(\Phi_A(\Ip)\right)$ substituted into the $\bb{y}$ variables in $\Phi_A\left(J_{\mon}\right)$ 
	  	parametrise $V(\Ip)\cap T_{\Ip}$. Hence, also $V\left(\Phi_A(\Ip)\right)$ gives the ambient torus of $V_{\Ip}$.
	  \end{proof}
	  
	  The advantage of Lemma \ref{lem:sameComponent} is that the binomial $\bb{y}^{\bb{b}^+}-\bb{y}^{\bb{b}^-}$, that we have used to get $A$, is sent to 
	  $y_1'-1$ under $\Phi_A$. Hence, we can substitute $1$ for $y_1'$ in $\Phi_A(\Ip)$ and $\Phi_A(J_\mon)$ and by this remove one more variable and one generator 
	  of $\Phi_A(\Ip)$. The resulting prime ideal and universal family again satisfy the conditions for Lemma \ref{lem:sameComponent} and thus we can repeat this 
	  reduction until $\Ip$ has become the zero ideal. Thus, we can remove $\Ip$ with the following construction.
	  
	  \begin{cons}[\textbf{Computing the universal family}]\label{cons:universalFamilyOfComponent}
	  	Let $\mon$ be a monomial $\A$-graded ideal. Compute the universal family $J_\mon$ and the defining ideal $I_\mon$ as in Proposition \ref{cons:localcoherentequations} 
	  	or \ref{cons:localincoherentequations}, if $\mon$ is coherent or non-coherent, respectively. Then reduce the redundant variables in $J_\mon$ and $I_\mon$ according to 
	  	Construction \ref{cons:reduceRedundantVariables} to $J_\mon'$ and $I_\mon'$. If $\mon$ is coherent use Proposition \ref{prop:coherentprimaryideals} and if $\mon$ is 
	  	non-coherent use Proposition \ref{prop:incoherentprimaryideals} to determine the primary ideals $\Iq_1,...,\Iq_m$ that determine the non-coherent components 
	  	containing $\mon$. Let $\Ip =\sqrt{\Iq_i}$ be one of the associated primes. Then use Construction \ref{cons:removeSingleVariables} to remove the single 
	  	variables given by $\Ip$ from $J_\mon'$ and $\Ip$ to get $J_\mon''$ and $\Ip'$, respectively. Pick a minimal generator $\bb{y}^{\bb{b}_1^+}-\bb{y}^{\bb{b}_1^-}$ of $\Ip'$ 
	  	and use the corresponding isomorphism $\Phi_{A_1}$ as in Construction \ref{cons:Isomorphism} to get $\Phi_{A_1}(\Ip')$ and $\Phi_{A_1}(J_\mon'')$. Repeat this until the 
	  	image of the prime ideal under the repeated isomorphisms is $\Phi_{A_k}(...(\Phi_{A_1}(\Ip')))=(0)$. Denote the ideal resulting from applying 
	  	$\Phi_{A_k}\circ ...\circ \Phi_{A_1}$ to $J_\mon''$ by $J_\mon(\Ip)$.
	  \end{cons}
	  
	  \begin{rem}
	  	The $k$ steps in Construction \ref{cons:universalFamilyOfComponent} can also be done in one step by using one isomorphism over $\IZ$ that maps to the torus. We have 
	  	shown the removal of $\Ip$ step by step just for lucidity. Although, when implementing this construction one should use the single isomorphism over $\IZ$ to the torus. 
	  	Furthermore, one can combine algorithmically Construction \ref{cons:removeSingleVariables} and the repeated steps of Construction \ref{cons:Isomorphism} into one single 
	  	morphism to the torus over $\IZ$.
	  \end{rem}
	  
	  \begin{df}\label{df:universalFamilyOfAComponent}
	  	Let $\mon$ be a monomial $\A$-graded ideal, $J_\mon$ the universal family of $\UM$ with defining ideal $I_\mon$, $\Ip$ a prime ideal as in 
	  	Proposition \ref{prop:coherentprimaryideals} or \ref{prop:incoherentprimaryideals} defining the underlying reduced scheme $V_{\Ip}$ of a non-coherent 
	  	irreducible component containing $\mon$. Then we call the ideal resulting from removing the single variables in $\Ip$ as in Construction 
	  	\ref{cons:removeSingleVariables} and the reduction of $J_\mon$ by every generator of $\Ip$ as in Construction \ref{cons:universalFamilyOfComponent}
	  	\[J_\mon(\Ip)=\left\langle\left. \bb{x}^{\bb{m}_j}- \bb{y}^{\bb{b}_j}\cdot s_j\,\right|\, j\in J_{\Ip} \right\rangle +
	  			\left\langle\left. \bb{x}^{\bb{m}_j}\,\right|\, j \notin J_{\Ip}\right\rangle\]
	  	in $\field\left[\left.\bb{x},y_i^{\pm 1} \,\right|\, i \in \rv(\Ip)\right]$ \emph{the universal family of the component $V_{\Ip}$}, where $\rv(\Ip)$ denotes the remaining variables 
	  	and $\bb{b}_j \in \IZ^{\# \rv(\Ip)}$ are the resulting exponents.
	  \end{df}
	  
	  \begin{thm}\label{thm:universalfamilyofcomponent}
	  	Let $J_\mon$ be a universal family with a prime ideal $\Ip$, together giving an affine chart of a reduced irreducible component $V_{\Ip}$ of the toric Hilbert scheme.	Let 
	  	$J_\mon(\Ip)$ be the universal family of this component. Then $(\field^*)^{\# \rv(\Ip)}=\Spec\left(\field\left[\left.y_i^{\pm 1} \,\right|\, i \in \rv(\Ip)\right]\right)$ 
	  	is isomorphic to the reduced irreducible component $V_{\Ip}$ intersected with its ambient torus by substituting these points for the $\bb{y}$ variables in $J_\mon(\Ip)$. 
	  	To be precise, the closed points of this irreducible component of the toric Hilbert scheme intersected with its ambient torus are exactly those $\A$-graded ideals that are 
	  	given by substituting a point $(\lambda_i)_{i\in \rv(\Ip)} \in (\field^*)^{\# \rv(\Ip)}$ for the $\bb{y}$ variables in $J_\mon(\Ip)$.
	  \end{thm}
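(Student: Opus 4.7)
The plan is to verify that the construction of $J_\mon(\Ip)$ preserves at every stage the bijection between the parameter torus and the closed points of $V_\Ip$ intersected with its ambient torus $T_\Ip$. The starting observation is that by Lemma \ref{lem:localChartcontainsTorus}, $T_\Ip$ lies inside $(\UM)_{\textrm{red}}$, so it suffices to describe $V_\Ip \cap (\UM)_{\textrm{red}}$ and then locate the ambient torus inside it. By Propositions \ref{prop:coherentprimaryideals} and \ref{prop:incoherentprimaryideals} combined with Remark \ref{rem:IMintoJM}, this intersection is described precisely by substituting points of $V(\Ip) \subseteq \Spec(\field[y_i \,|\, i \in \rv])$ into the $\bb{y}$-variables of $J_\mon'$.

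The first reduction is Construction \ref{cons:removeSingleVariables}. Every monomial generator $y_i$ of $\Ip$ vanishes on all of $V(\Ip)$, so removing these variables by $\Psi$ induces an isomorphism $V_\Ip \cap (\UM)_{\textrm{red}} \cong \Spec(\field[y_i \,|\, i \in \rv']/\Ip')$ (Remark \ref{rem:localaffinechart}), where $\Ip'$ is now a pure binomial prime. Substituting a point of this spectrum into $J_\mon''$ still yields the corresponding $\A$-graded ideal of $V_\Ip$, and the image of $T_\Ip$ lies inside the open subset where every remaining $y_i$ is nonzero.

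Next I would iterate Construction \ref{cons:Isomorphism}. Pick a minimal binomial generator $\bb{y}^{\bb{b}^+} - \bb{y}^{\bb{b}^-}$ of the current pure binomial prime; since the prime is generated by lattice relations, the exponent vector $\bb{b}=\bb{b}^+ - \bb{b}^-$ is primitive, so an $A \in \Gl(\rv',\IZ)$ sending $\bb{b}$ to $e_1$ exists. The torus automorphism $\Phi_A$ of Lemma \ref{lem:sameComponent} then sends this generator to $y_1'-1$, keeping the parametrisation of $V_\Ip \cap T_\Ip$ intact. Setting $y_1'=1$ now eliminates one variable and one generator simultaneously, yielding a new pure binomial prime in one fewer variable. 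Since $\Ip'$ is finitely generated, this iteration terminates after finitely many steps with the zero ideal, producing $J_\mon(\Ip)$ in the Laurent polynomial ring $\field[\bb{x}, y_i^{\pm 1} \,|\, i \in \rv(\Ip)]$. The composition of all the $\Phi_{A_j}$ is a torus isomorphism, and under it $T_\Ip$ corresponds to $(\field^*)^{\# \rv(\Ip)}$; any point substituted into $J_\mon(\Ip)$ produces, by unwinding the isomorphisms, a well-defined closed point of $V_\Ip \cap T_\Ip$, and conversely every such point arises this way.

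The hardest step is ensuring that the iteration is well-defined: after applying $\Phi_{A_j}$ and setting $y_1'=1$, the resulting ideal must again be a pure binomial prime so that the next step can proceed. Primeness is preserved under the torus automorphism $\Phi_{A_j}$, and quotienting by the linear generator $y_1'-1$ (which trivialises one coordinate in the Laurent setting) again yields a binomial prime in the remaining variables by standard arguments for binomial ideals. A secondary subtlety worth flagging is that the surviving $\bb{y}$-exponents $\bb{b}_j$ in $J_\mon(\Ip)$ may become negative, so one genuinely needs to restrict to the torus $(\field^*)^{\# \rv(\Ip)}$ rather than an affine space; this is exactly why the statement speaks of the ambient torus and not of the whole reduced component.
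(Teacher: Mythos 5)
Your proposal is correct and follows essentially the same route as the paper's own proof: the paper likewise iterates Lemma \ref{lem:sameComponent} over the reduction steps of Constructions \ref{cons:removeSingleVariables} and \ref{cons:Isomorphism} until $\Ip$ becomes zero. Your version is more explicit about the setup (Lemma \ref{lem:localChartcontainsTorus}, Remarks \ref{rem:IMintoJM} and \ref{rem:localaffinechart}) and you helpfully flag the two subtleties that the paper asserts without comment — that the iteration preserves the ``pure binomial prime'' hypothesis of Lemma \ref{lem:sameComponent}, and that the negative $\bb{y}$-exponents force the restriction to $(\field^*)^{\# \rv(\Ip)}$ rather than to affine space — but these remarks refine rather than change the argument.
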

	  
	  \begin{proof}
	  	The theorem follows directly from Lemma \ref{lem:sameComponent}. Denote by $A_i$ the matrix of the $i$-th reduction and let $V_{\Ip}^T$ be the intersection 
	  	of the reduced irreducible component, given by $\Ip$, with its torus. Now we use the lemma at every step of the reduction to get the isomorphism between $V_{\Ip}^T$ and 
	  	$V(\Phi_{A_k}(...(\Phi_{A_1}(\Ip)))) \cap T$ via $\Phi_{A_k}(...(\Phi_{A_1}(J_\mon)))$. A minimal generating set of $\Ip$ is finite, so after a finite number $h$ of 
	  	reduction steps we get $\Phi_{A_h}(...(\Phi_{A_1}(\Ip)))=(0)$ because we have removed all generators, thus $(\field^*)^{\# \rv(\Ip)}=V(0)$ is isomorphic to $V_{\Ip}^T$ via 
	  	$J_\mon(\Ip)=\Phi_{A_h}(...(\Phi_{A_1}(J_\mon)))$.
	  \end{proof}
	  
	  \begin{cor}\label{rem:dimensionofcomponent}
	  	The ambient torus of a non-coherent irreducible component of $\HA$ is given by one universal family 
	  	$J_\mon(\Ip)$ in $\field\left[\left.\bb{x},y_i^{\pm} \,\right|\, i \in \rv(\Ip)\right]$. Hence, the dimension of this component is $\# \rv(\Ip)$. This 
	  	means that the dimension of every non-coherent component is bounded by the number of elements in the Graver basis, since $\# \rv(\Ip)$ is bounded by the 
	  	number of generators of $J_\mon$ and all $\bb{x}^{\bb{m}_j}-s_j$ are Graver.
	  	
	  	Furthermore, since $J_\mon(\Ip)$ gives the ambient torus of the irreducible component, the closure of the torus is the whole component.\qed
	  \end{cor}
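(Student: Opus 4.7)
The statement is essentially a package of three consequences of Theorem \ref{thm:universalfamilyofcomponent}, so my plan is to read each assertion off that theorem together with facts already recorded in the constructions.

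First, for the identification of the ambient torus: Theorem \ref{thm:universalfamilyofcomponent} tells us that substituting points of $(\field^*)^{\# \rv(\Ip)}$ into the $\bb{y}$-variables of $J_\mon(\Ip)$ gives exactly the closed points of $V_{\Ip}$ intersected with its ambient torus, and that the substitution map is an isomorphism onto this intersection. Thus $V_{\Ip}$ restricted to its dense torus (Definition \ref{df:ambientTorus}) is isomorphic to $\Spec(\field[y_i^{\pm 1} \mid i \in \rv(\Ip)])$. Since an irreducible variety has the same dimension as its dense torus, the dimension of $V_{\Ip}$ equals $\# \rv(\Ip)$.

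Next, for the bound on the dimension: the set $\rv(\Ip)$ is obtained from the index set $\{1,\dots,p_\mon\}$ of the minimal monomial generators of $\mon$ by first removing redundant variables (Construction \ref{cons:reduceRedundantVariables}) and then removing the single variables $y_j$ lying in $\Ip$ (Construction \ref{cons:removeSingleVariables}) and finally one more variable per change of coordinates (Construction \ref{cons:Isomorphism}); every such operation only removes indices, never adds them. Hence $\# \rv(\Ip) \leq p_\mon$, the number of generators of $J_\mon$. Now Constructions \ref{cons:localcoherentequations} and \ref{cons:localincoherentequations} explicitly record that each generator $\bb{x}^{\bb{m}_j}-s_j$ is primitive, i.e. a Graver binomial (otherwise $\bb{x}^{\bb{m}_j}$ would not be a minimal generator of $\mon$). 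Therefore $p_\mon$ is at most $\#\G$, and the dimension bound follows.

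Finally, for the closure statement: the ambient torus of $V_{\Ip}$ is dense in $V_{\Ip}$ by Definition \ref{df:ambientTorus}, and $V_{\Ip}$ is irreducible as the reduced structure of an irreducible component of $\HA$; hence the closure of the torus in $\HA$ coincides with the closure of the torus inside $V_{\Ip}$, which is $V_{\Ip}$ itself.

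No step here is a serious obstacle; the entire argument is bookkeeping that assembles earlier results. The only subtlety is being careful that the torus produced by $J_\mon(\Ip)$ really is the ambient torus in the sense of Definition \ref{df:ambientTorus} rather than some smaller subtorus, but this is already established in Theorem \ref{thm:universalfamilyofcomponent} via Lemma \ref{lem:sameComponent}, which tracks at each change of coordinates that the intersection with the ambient torus is preserved.
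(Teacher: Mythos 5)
Your proposal is correct and matches the route the paper clearly intends: the corollary carries a \verb|\qed| with no separate proof, precisely because it is the immediate unpacking of Theorem \ref{thm:universalfamilyofcomponent} plus the bookkeeping you carry out — Constructions \ref{cons:reduceRedundantVariables}, \ref{cons:removeSingleVariables}, and \ref{cons:Isomorphism} only ever shrink the index set so $\# \rv(\Ip)\le p_\mon$, and Constructions \ref{cons:localcoherentequations}/\ref{cons:localincoherentequations} already record that each $\bb{x}^{\bb{m}_j}-s_j$ is a Graver binomial. Your remark at the end, that one should not confuse the torus parametrised by $J_\mon(\Ip)$ with a proper subtorus, is exactly the only subtle point and is handled by the theorem's reliance on Lemma \ref{lem:sameComponent}; so nothing is missing.
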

	  
	  \begin{rem}
	  	The ambient torus of a reduced non-coherent component $V_{\Ip}$ is given by 
	  	$T_{\Ip}:=\Spec\left(\field\left[\left.y_i^{\pm 1}\,\right|\, i \in \rv(\Ip)\right]\right)$, \textit{i.e.} the points of $T_{\Ip}$ 
	  	correspond to the points in the ambient torus of $V_{\Ip}$ via $J_\mon(\Ip)$. We refer to $T_{\Ip}$ as \emph{the ambient torus of the non-coherent component $V_{\Ip}$}. Note that 
	  	the ambient torus of the coherent component is the $n$-torus $T=\Spec(\field[\bb{x}^{\pm 1}])$ to which $T_{\Ip}$ is the analog for a non-coherent component.
	  \end{rem}
	  
	  We now give a slight variation of Theorem \ref{thm:universalfamilyofcomponent} that avoids Laurent monomials.
	  
	  \begin{cor}
	  	Let $J_\mon(\Ip) = \left\langle\left. \bb{x}^{\bb{m}_j}- \bb{y}^{\bb{b}_j}\cdot s_j\,\right|\, j\in J_{\Ip} \right\rangle + 
	  				\left\langle\left. \bb{x}^{\bb{m}_j} \,\right|\, j \notin J_{\Ip} \right\rangle$ be the universal family 
	  	of $V_{\Ip}$ in $\field\left[\left.\bb{x},y_i^{\pm 1} \,\right|\, i \in \rv(\Ip)\right]$.	Then 
	  	\[J_\mon(\Ip)' = \left\langle\left. \bb{y}^{\bb{b}_j^-}\cdot\bb{x}^{\bb{m}_j}- \bb{y}^{\bb{b}_j^+}\cdot s_j\,\right|\, j\in J_{\Ip} \right\rangle + 
	  			\left\langle\left. \bb{x}^{\bb{m}_j}\,\right|\, j\notin J_{\Ip}\right\rangle\]
	  	in $\field\left[\left.\bb{x},y_i^{\pm} \,\right|\, i \in \rv(\Ip)\right]$	is also a universal family for $V_{\Ip}$ giving an isomorphism between $(\field^*)^{\# \rv(\Ip)}$ and $V_{\Ip}$ 
	  	intersected with its ambient torus, where $\bb{b}_j = \bb{b}_j^+ - \bb{b}_j^-$ is the unique decomposition into two positive vectors. 
	  \end{cor}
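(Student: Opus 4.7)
The plan is to reduce the corollary to Theorem \ref{thm:universalfamilyofcomponent} by comparing the ideals obtained when specialising $J_\mon(\Ip)$ and $J_\mon(\Ip)'$ at the same torus point. The new family $J_\mon(\Ip)'$ is obtained from $J_\mon(\Ip)$ merely by multiplying the binomial generator indexed by $j\in J_{\Ip}$ by $\bb{y}^{\bb{b}_j^-}$, while the purely monomial generators indexed by $j\notin J_{\Ip}$ are left unchanged. So the only thing to verify is that this rescaling does not alter the specialised $\A$-graded ideal at any closed point of $(\field^*)^{\# \rv(\Ip)}$.

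First I would fix a point $\bb{\lambda}=(\lambda_i)_{i\in\rv(\Ip)}\in(\field^*)^{\# \rv(\Ip)}$ and observe that $\bb{\lambda}^{\bb{b}_j^-}$ is a nonzero element of $\field$, since each $\lambda_i$ is a unit. The specialisation of the corresponding generator of $J_\mon(\Ip)'$ then reads
\[\bb{\lambda}^{\bb{b}_j^-}\cdot \bb{x}^{\bb{m}_j}-\bb{\lambda}^{\bb{b}_j^+}\cdot s_j \;=\; \bb{\lambda}^{\bb{b}_j^-}\bigl(\bb{x}^{\bb{m}_j}-\bb{\lambda}^{\bb{b}_j}\cdot s_j\bigr),\]
using $\bb{b}_j=\bb{b}_j^+-\bb{b}_j^-$. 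This differs from the specialisation of the analogous generator of $J_\mon(\Ip)$ by the unit scalar $\bb{\lambda}^{\bb{b}_j^-}\in\field^*$, so the two specialisations generate identical ideals in $S$; the purely monomial generators $\bb{x}^{\bb{m}_j}$ for $j\notin J_{\Ip}$ contribute the same in either case.

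Consequently, Theorem \ref{thm:universalfamilyofcomponent}, which asserts that substituting points of $(\field^*)^{\# \rv(\Ip)}$ into $J_\mon(\Ip)$ yields exactly the $\A$-graded ideals on $V_{\Ip}$ intersected with its ambient torus, applies verbatim to $J_\mon(\Ip)'$, giving the same bijection with the torus. I do not foresee any genuine obstacle: the statement is essentially a cosmetic repackaging trading Laurent monomials for honest polynomials, which will pave the way for the homogenising variables $z_i$ introduced in the next construction. A minor point worth noting in the write-up is that the argument relies crucially on being \emph{on} the torus, where $\bb{\lambda}^{\bb{b}_j^-}\neq 0$; on the boundary of the torus the two families need not coincide set-theoretically, but that boundary is not part of the claim.
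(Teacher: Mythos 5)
Your proof is correct and follows the same reasoning as the paper's: the new generator differs from the old one only by multiplication by the unit $\bb{y}^{\bb{b}_j^-}$, so the parametrisation from Theorem \ref{thm:universalfamilyofcomponent} is unchanged. Your pointwise specialisation at $\bb{\lambda}$ merely spells out what the paper states directly at the level of the Laurent ring.
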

	  
	  \begin{proof}
	  	Just note that it is an isomorphism of tori. Thus we do not change the isomorphism by multiplying the $j$-th generator of $J_\mon(\Ip)$ with 
	  	$\bb{y}^{\bb{b}_j^-}$, because this is just multiplication with a unit.
	  \end{proof}
	  
	  The universal family $J_\mon(\Ip)$ has the advantage that it defines the ambient torus of the non-coherent irreducible component on its own, which means we do not need any equations 
	  on the coefficients in $\bb{y}$ anymore. 
	  
	  We now apply Construction \ref{cons:universalFamilyOfComponent} to a monomial $\A$-graded ideal where all steps of the construction of $J_\mon(\Ip)$ have to be done.
	  
	  \begin{ex}\label{ex:The2by6example}
	  	 Let $\A=\left\{\vect{0}{6},\vect{2}{4},\vect{3}{0},\vect{3}{7},\vect{4}{2},\vect{6}{1}\right\}\subset \IZ^2$. Then the toric ideal is
	  	 \begin{eqnarray*}
	  	 	I_{\A} & = & \left\langle bc^2-e^2, ac^2-be, b^2-ae, cd-af, c^8e^3-f^6, c^3e^6-df^5, be^7-d^2f^4, \right.\\
	  	 	       &   & \left.a^2ce^6-d^3f^3, a^3e^6-d^4f^2, a^4bce^4-d^5f, d^6-a^5be^4\right\rangle.
	  	 \end{eqnarray*}
	  	 The Graver basis has $381$ elements and there are $9588$ monomial $\A$-graded ideals, which were found by using the algorithm in 
	  	 \cite[Section 1]{StillmanSturmfelsThomas:Algorithms}. We choose the non-coherent monomial $\A$-graded ideal 
	  	 \begin{eqnarray*}
	  	 		\mon & = & \left\langle bc^2, ae, ac^2, cd, abcf, a^2cf, b^3cf, a^2bf^2, a^3f^2, ab^3f^2, c^8e^3, \right.\\
	  	 	  	&   & \left. b^5f^2, df^5, bf^6, d^2f^4, af^6, b^4ce^4, d^4f^2, b^9c, ad^3f^3, ab^9, d^6e\right\rangle
	  	 	\end{eqnarray*}
	  	 	which has $22$ generators. Therefore, the universal family $J_\mon$ of this monomial ideal is in $\field[a,b,c,d,e,f,y_1,...,y_{22}]$, 
	  	 	and the defining ideal $I_\mon$ lies in $\field[y_1,...,y_{22}]$ and has $40$ generators. The equations give $14$ redundant variables 
	  	 	(all except $y_4,y_{11},y_{12},y_{14},y_{17},y_{20},y_{21}$, and $y_{22}$), so if we remove them we get 
	  	 	\begin{eqnarray*}
	  	 		J_\mon'&=&\left\langle bc^2-y_{11}y_{14}e^2, ae-y_{21}y_{22}b^2, ac^2-y_4^2y_{12}y_{21}^3y_{22}^3be, cd-y_4af,\right.\\
	  	 				& &\left. abcf-y_4y_{12}y_{21}^2y_{22}^2de^2,a^2cf-y_4y_{12}y_{21}^3y_{22}^3bde,b^3cf-y_4y_{12}y_{21}y_{22}de^3,\right.\\
	  	 		    & &\left. a^2bf^2-y_{12}y_{21}^2y_{22}^2d^2e^2, a^3f^2-y_{12}y_{21}^3y_{22}^3bd^2e, ab^3f^2-y_{12}y_{21}y_{22}d^2e^3,\right.\\
	  	 		    & &\left. c^8e^3-y_{11}f^6, b^5f^2-y_{12}d^2e^4, df^5-y_4^3y_{12}y_{14}y_{21}^3y_{22}^3c^3e^6, bf^6-y_{14}c^6e^5,\right.\\
	  	 		    & &\left. d^2f^4-y_{12}^2y_{22}be^7, af^6-y_4^2y_{12}y_{14}y_{21}^3y_{22}^3c^4e^6,b^4ce^4-y_{17}d^3f^3,\right.\\
	  	 		    & &\left. d^4f^2-y_{12}y_{22}b^6e^3, b^9c-y_4y_{21}d^5f, ad^3f^3-y_{20}b^6ce^3, \right.\\
	  	 		    & &\left. ab^9-y_{21}d^6, d^6e-y_{22}b^{11}\right\rangle
	  	 	\end{eqnarray*}
	  	 	and the defining ideal
	  	 	\begin{eqnarray*}
	  	 		I_\mon' & = & \left\langle y_{17}y_{20}-y_{21}y_{22},y_4y_{20}-y_{12}y_{22},y_{12}y_{17}-y_4y_{21},y_4^2y_{12}y_{21}^2y_{22}^2-y_{11}y_{14},\right.\\
	  	 		     &   & \left. y_4y_{11}^2y_{14}^3y_{17}y_{21}y_{22}-y_{12},y_{11}^3y_{14}^4y_{21}y_{22}-y_{12}^3y_{21}y_{12}^2\right\rangle
	  	 	\end{eqnarray*}  	 		     
	  	 	for which we can construct the primary decomposition
	  	 	\begin{eqnarray*}
	  	 		I_\mon' &   =  & \left\langle y_{17}y_{20}-y_{21}y_{22},y_4y_{20}-y_{12}y_{22},y_{12}y_{17}-y_4y_{21},y_4^2y_{12}y_{21}^2y_{22}^2-y_{11}y_{14},\right.\\
	  	 				 &			& \left. y_{11}^2y_{14}^3y_{17}^2y_{22}-1,y_{11}^3y_{14}^4y_{21}y_{22}-y_{12}^3y_{21}y_{22}^2\right\rangle \\
	  	 		     & \cap & \left\langle y_4,y_{11},y_{12},y_{17}y_{20}-y_{21}y_{22}\right\rangle\\
	  	 		     & \cap & \left\langle y_{11},y_{12},y_{20},y_{21}\right\rangle\\
	  	 		     & \cap & \left\langle y_4,y_{12},y_{14},y_{17}y_{20}-y_{21}y_{22}\right\rangle.
	  	 	\end{eqnarray*}
	  	 	The first primary ideal contains an element of $\field[\bb{y}]\setminus \left\langle\bb{y}\right\rangle$, so it 
	  	 	does not define a non-coherent component containing $\mon$. Hence, $\mon$ lies on $3$ non-coherent components 
	  	 	and all three of them are reduced. Therefore, 
	  	 	$\Ip = \left\langle y_4,y_{11},y_{12},y_{17}y_{20}-y_{21}y_{22}\right\rangle$ defines an affine chart of a reduced 
	  	 	irreducible component containing $\mon$. Now we apply Construction \ref{cons:removeSingleVariables} to remove the 
	  	 	single variables in $\Ip$ which gives $\Ip'=\left\langle y_{17}y_{20}-y_{21}y_{22}\right\rangle$ and the new universal family
	  	 	\begin{eqnarray*}
	  	 		J_\mon''&=&\left\langle ae-y_{21}y_{22}b^2, bf^6-y_{14}c^6e^5, b^4ce^4-y_{17}d^3f^3, ad^3f^3-y_{20}b^6ce^3,\right.\\
	  	 		     & &\left. ab^9-y_{21}d^6, d^6e-y_{22}b^{11} \right\rangle + \left\langle bc^2, ac^2, cd, abcf, a^2cf, b^3cf,\right.\\
	  	 		     & &\left. a^2bf^2, a^3f^2, ab^3f^2, c^8e^3, b^5f^2, df^5, d^2f^4, af^6,d^4f^2, b^9c\right\rangle.
	  	 	\end{eqnarray*}
	  	 	There is one more binomial generator $y_{17}y_{20}-y_{21}y_{22}$ in $\Ip'$ left, which has the exponent vector $\left(0,1,1,-1,-1\right)^t$ in the remaining 
	  	 	variables $y_{14},y_{17},y_{20},y_{21},y_{22}$. Hence, we have to apply one isomorphism from Construction \ref{cons:Isomorphism} to remove that binomial. Our choice for this is 
	  	 	\[A := \left(\begin{array}{ccccc}
	  	 								0& 1&0&0&0\\
	  	 								1& 0&0&0&0\\
	  	 								0&-1&1&0&0\\
	  	 								0& 1&0&1&0\\
	  	 								0& 1&0&0&1	
	  	 							\end{array}\right)\in \Gl(5,\IZ).\]
	  	 	This means we get the isomorphism 
	  	 	\[\Phi_A : \field[y_{14}^{\pm 1},y_{17}^{\pm 1},y_{20}^{\pm 1},y_{21}^{\pm 1},y_{22}^{\pm 1}] \rightarrow \field[y_0^{\pm 1},...,y_4^{\pm 1}],\]
	  	 	that maps
	  	 	\[y_{14}\mapsto y_1,\;\; y_{17}\mapsto \frac{y_0y_3y_4}{y_2},\;\; y_{20} \mapsto y_2,\;\; y_{21} \mapsto y_3,\;\; \textrm{ and } y_{22}\mapsto y_4.\]
	  	 	Hence, $\Phi_A(\Ip')=y_0-1$ and the universal family is mapped to
	  	 	\begin{eqnarray*}
	  	 		\Phi_A(J_\mon'')&=&\left\langle ae-y_3y_4b^2, bf^6-y_1c^6e^5, b^4ce^4-\frac{y_0y_3y_4}{y_2}d^3f^3,\right.\\
	  	 								 & &\left. ad^3f^3-y_2b^6ce^3, ab^9-y_3d^6, d^6e-y_4b^{11}\right\rangle + \\
	  	 								 & &\left\langle bc^2, ac^2, cd, abcf, a^2cf, b^3cf, a^2bf^2, a^3f^2,\right.\\
	  	 		    				 & &\left. ab^3f^2, c^8e^3, b^5f^2, df^5, d^2f^4, af^6,d^4f^2, b^9c\right\rangle.
	  	 	\end{eqnarray*}
	  	 	But now we have to set $y_0$ to $1$ so that the universal family of the component $V_{\Ip}$ is 
	  	 	\begin{eqnarray*}
	  	 		J_\mon(\Ip) & = & \left\langle ae-y_3y_4b^2, bf^6-y_1c^6e^5, b^4ce^4-\frac{y_3y_4}{y_2}d^3f^3,\right.\\
	  	 						 &   & \left. ad^3f^3-y_2b^6ce^3, ab^9-y_3d^6, d^6e-y_4b^{11}\right\rangle + \\
	  	 						 &	 & \left\langle bc^2, ac^2, cd, abcf, a^2cf, b^3cf, a^2bf^2, a^3f^2,\right.\\
	  	 		    		 &   & \left. ab^3f^2, c^8e^3, b^5f^2, df^5, d^2f^4, af^6,d^4f^2, b^9c\right\rangle
	  	 	\end{eqnarray*}
	  	 	in $\field[a,b,c,d,e,f,y_1,y_2,y_3,y_4]$.\EX	 	
	  \end{ex}

%%%%%%%%%%%%%%%%%%%%%%%%%%%%%%%%%%%%%%%%%%%%%
\section{Isomorphic Universal Families}     %
%%%%%%%%%%%%%%%%%%%%%%%%%%%%%%%%%%%%%%%%%%%%%

		So far, we have constructed universal families $J_\mon(\Ip)$ for every monomial $\A$-graded ideal $\mon$ which gives
		the ambient torus of some reduced non-coherent component $V_{\Ip}$ containing $\mon$. Thus, if we want to describe all non-coherent components, 
		we have to compute all universal families for each monomial $\A$-graded ideal. But this means that we would construct for one 
		non-coherent irreducible component different universal families, one for each monomial $\A$-graded ideal 
		in that component. Hence, we have to find a method to identify two universal families defining the same non-coherent component.
		
		Consider two monomial $\A$-graded ideals $\mon_1,\mon_2$ with two prime ideals $\Ip_1,\Ip_2$ giving reduced irreducible components $V_{\Ip_1},V_{\Ip_2}$ 
		of $\HA$ that contain $\mon_1$ and $\mon_2$, respectively. Then we have the two universal families
		\begin{equation}\label{eqn:twouniversalfamilies}
			\renewcommand{\arraystretch}{1.15}\begin{array}{rcl}
				J_1 := & J_{\mon_1}(\Ip_1)= &  \left\langle\left. \bb{x}^{\bb{m}_j}- \bb{y}^{\bb{b}_j}\cdot \bb{x}^{\bb{n}_j}\,\right|\, j\in J_{\Ip_1} \right\rangle 
						+ \left\langle\left. \bb{x}^{\bb{m}_j} \,\right|\, j \notin J_{\Ip_1} \right\rangle \textrm{ and}\\
				J_2 := & J_{\mon_2}(\Ip_2)= &  \left\langle\left. \bb{x}^{\bb{u}_j}- \bb{y}'^{\bb{c}_j}\cdot \bb{x}^{\bb{v}_j}\,\right|\, j\in J_{\Ip_2} \right\rangle 
						+ \left\langle\left. \bb{x}^{\bb{u}_j} \,\right|\, j \notin J_{\Ip_2} \right\rangle,
			\end{array}\renewcommand{\arraystretch}{1.15}
		\end{equation}
		where again $\mon_1=\left\langle\left. \bb{x}^{\bb{m}_j} \,\right|\, j=1,...,p_{\mon_1}\right\rangle$ and 
		$\mon_2=\left\langle\left. \bb{x}^{\bb{u}_j} \,\right|\, j=1,...,p_{\mon_2}\right\rangle$. Recall 
		that $J_{\Ip_i}$ are all indices in $\left\{1,...,p_{\mon_i}\right\}$ whose variables were not set to zero by the removal of single 
		variables (Construction \ref{cons:removeSingleVariables}), and the $\bb{b}_j,\bb{c}_j$ are the exponents of $\bb{y}$ and $\bb{y}'$, respectively, of the remaining 
		binomials after Construction \ref{cons:universalFamilyOfComponent}. Note that we have already renumbered the coefficients, such that the new variables are 
		$\bb{y}=(y_1,...,y_r)$ and $\bb{y}'=(y_1',...,y_s')$ with $r=\rv(\Ip_1)$ and $s=\rv(\Ip_2)$. 
		
		\begin{lem}\label{lem:sameidentity}
			Let $J_1$ and $J_2$ be two universal families as in (\ref{eqn:twouniversalfamilies}) which parametrise the ambient torus of the same reduced non-coherent irreducible 
			component of $\HA$, then we have $\left.J_1\right._{\left(\bb{y}=(\bb{1})\right)}=\left.J_2\right._{\left(\bb{y}'=(\bb{1})\right)}$. This means the $\A$-graded 
			ideals given by the identity in $J_1$ and $J_2$ are the same.
		\end{lem}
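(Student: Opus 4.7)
The plan is to evaluate both $J_i|_{\bb{y}=(\bb{1})}$ explicitly and to identify the result with a projective point in $\Pc = \prod \IP^{|\bb{a}|}$ determined intrinsically by the component $V_{\Ip}$. To this end, I would first track the identity $\bb{y}=(\bb{1})$ through each step of Construction \ref{cons:universalFamilyOfComponent}. Every operation there is either a monomial substitution $y_i \mapsto \prod_j y_j^{b_j}$ (from Constructions \ref{cons:reduceRedundantVariables} and \ref{cons:Isomorphism}) or the zero-assignment $y_i \mapsto 0$ (from Construction \ref{cons:removeSingleVariables}). The monomial substitutions evaluate to $1$ at $\bb{y}=(\bb{1})$, whereas the zero-assignments turn the binomial $\bb{x}^{\bb{m}_j^{(i)}} - y_j s_j^{(i)}$ into the monomial $\bb{x}^{\bb{m}_j^{(i)}}$. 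Hence
\[J_i|_{\bb{y}=(\bb{1})} = \left\langle \left. \bb{x}^{\bb{m}_j^{(i)}} - s_j^{(i)} \,\right|\, j \in J_{\Ip_i}\right\rangle + \left\langle \left. \bb{x}^{\bb{m}_j^{(i)}} \,\right|\, j \notin J_{\Ip_i}\right\rangle.\]

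Next, I would read off this ideal in the global Peeva--Stillman coordinates $\xi^{\bb{a}}_{\bb{m}}$ on $\Pc$. In each factor $\IP^{|\bb{a}|}$ the resulting point has every $\xi^{\bb{a}}_{\bb{m}}$ either equal (normalised to $1$ by the affine chart $\UM_i$) or equal to $0$; projectively it is the point whose nonzero homogeneous coordinates are all equal. The decisive observation is that the set of vanishing coordinates is intrinsic to $V_{\Ip}$: the coordinate $\xi^{\bb{a}}_{\bb{m}_j^{(i)}}$ vanishes on $J_i|_{\bb{y}=(\bb{1})}$ exactly when $y_j$ appears as a single-variable generator of $\Ip_i$, and by the definition of $\Ip_i$ as an associated prime of the local equations together with the irreducibility of $V_{\Ip}$, this happens exactly when $\xi^{\bb{a}}_{\bb{m}_j^{(i)}}$ vanishes identically on $V_{\Ip}$. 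Since this last condition refers only to the component and not to any particular chart, both identity evaluations yield the same projective point in $\Pc$ and hence the same $\A$-graded ideal.

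The main obstacle is the rigorous identification of the single-variable generators of $\Ip_i$ with the Peeva--Stillman coordinates vanishing identically on $V_{\Ip}$. One direction is immediate: if $y_j \in \Ip_i$ as a single variable, then $\xi^{\bb{a}}_{\bb{m}_j^{(i)}}$ vanishes on the dense open subset $V(\Ip_i) \cap \UM_i$ of $V_{\Ip}$ and thus identically on $V_{\Ip}$. The reverse direction requires showing that a globally vanishing Peeva--Stillman coordinate must appear as a single variable in the local equations around every $\mon_i$, which I would extract from Lemma \ref{lemma:UMring} combined with the primary decomposition argument of Proposition \ref{prop:incoherentprimaryideals}. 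Once this correspondence is established, the equality $J_1|_{\bb{y}=(\bb{1})} = J_2|_{\bb{y}'=(\bb{1})}$ follows.
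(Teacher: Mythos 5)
Your strategy---show that $J_1|_{\bb{y}=(\bb{1})}$ and $J_2|_{\bb{y}'=(\bb{1})}$ determine the same closed point of $\Pc$ because the set of vanishing Peeva--Stillman coordinates is intrinsic to $V_{\Ip}$---is a genuinely different route from the paper's. The paper argues directly: since $J_1$ and $J_2$ parametrise the same ambient torus, there is $\bb{\lambda}$ with $J_1|_{\bb{y}=(\bb{1})}=J_2|_{\bb{y}'=\bb{\lambda}}$; for each Graver binomial $\bb{x}^{\bb{m}}-\bb{x}^{\bb{n}}$ in this ideal one compares the two presentations to force $\bb{\lambda}^{\bb{w}}=1$ for every exponent $\bb{w}$ that occurs, and the claim then follows because an $\A$-graded ideal is determined by its Graver coefficients.

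There is a gap in your argument, however. Your identification of the vanishing coordinates only covers the $\xi^{\bb{a}}_{\bb{m}_j^{(i)}}$ attached to the minimal generators of $\mon_i$, via the single-variable generators of $\Ip_i$, but the projective point in each factor $\IP^{|\bb{a}|}$ is determined by the values of $\xi^{\bb{a}}_{\bb{m}}$ for \emph{every} monomial $\bb{m}$ of degree $\bb{a}$. Whether a non-generator monomial $\bb{x}^{\bb{m}}$ lies in $J_i|_{(\bb{1})}$ is decided by the reduction process through the binomial generators and is not read off from which $y_j$'s belong to $\Ip_i$; and since $\mon_1$ and $\mon_2$ have different generators, the two reductions are not directly comparable. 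You also flag the reverse implication---that a globally vanishing coordinate must appear as a single variable in each $\Ip_i$---as an unresolved obstacle. Both issues are closed at once by invoking Lemma \ref{lem:diagonaltorusaction} in place of the obstructed bookkeeping: the ambient torus acts diagonally, so the zero set of every coordinate $\xi^{\bb{a}}_{\bb{m}}$ is constant across it, and since $J_1|_{(\bb{1})}$ and $J_2|_{(\bb{1})}$ are both points of that ambient torus (Theorem \ref{thm:universalfamilyofcomponent}), they share the same vanishing set; combined with your (correct) observation that their nonvanishing coordinates are all equal, this yields equality of the two projective points and completes your alternative proof.
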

		
		\begin{proof}
			If $J_1$ and $J_2$ parametrise the same $\A$-graded ideals then there exists some point $\bb{\lambda}=(\lambda_1,...,\lambda_s)$ such that 
			$\left.J_1\right._{\left(\bb{y}=(\bb{1})\right)}=\left.J_2\right._{\left(\bb{y}'=\bb{\lambda}\right)}$. Now let 
			$\bb{x}^{\bb{m}}-\bb{x}^{\bb{n}}$ be a Graver binomial in $\left.J_1\right._{\left(\bb{y}=(\bb{1})\right)}$. Because they are equal, 
			$\bb{x}^{\bb{m}}-\bb{x}^{\bb{n}} \in \left.J_2\right._{\left(\bb{y}'=\bb{\lambda}\right)}$ holds as well. But we have 
			$\bb{x}^{\bb{m}}-\bb{\lambda}^{\bb{w}}\cdot \bb{x}^{\bb{n}} \in \left.J_2\right._{\left(\bb{y}'=\bb{\lambda}\right)}$ for some $\bb{w} \in {\IZ}^{s}$, so we 
			must have $\bb{\lambda}^{\bb{w}}=1$ which is satisfied for $\bb{\lambda}=(\bb{1})$. This holds for every Graver binomial, so that 
			$\left.J_1\right._{\left(\bb{y}=(\bb{1})\right)}=\left.J_2\right._{\left(\bb{y}'=\bb{\lambda}\right)}=\left.J_2\right._{\left(\bb{y}'=(\bb{1})\right)}$ holds, since 
			by \cite{PeevaStillman:ToricHilbert} or \cite{Sturmfels:GeomAGraded} every {$\A$-graded} ideal is determined by its coefficients of the Graver binomials.
		\end{proof}
		
		\begin{rem}
			Note, that in fact $\bb{\lambda}=(\bb{1})$ since the parametrisation by $J_2$ is an isomorphism. Furthermore, if the two families $J_1$ and $J_2$ are isomorphic 
			then $r=s$ as the dimension of the component is the number of remaining variables in the universal family.
		\end{rem}
		
		On the other hand, consider two universal families $J_1$ and $J_2$ with equal ideals for the identity
		$\left.J_1\right._{\left(\bb{y}=(\bb{1})\right)}=\left.J_2\right._{\left(\bb{y}'=(\bb{1})\right)}$ and let 
		$\bb{x}^{\bb{m}_j}- \bb{x}^{\bb{n}_j}$ be in $\left.J_1\right._{\left(\bb{y}=(\bb{1})\right)}$ with $\bb{x}^{\bb{m}_j},\bb{x}^{\bb{n}_j}\notin J_i$. Since 
		the fibers of the universal families in the point $(\bb{1})$ are the same  there is a unique $\bb{b}_j' \in \IZ^s$ such that 
		$\bb{x}^{\bb{m}_j}- \bb{y}'^{\bb{b}_j'} \cdot \bb{x}^{\bb{n}_j} \in J_2$. In fact, this existence is not trivial so that we will construct $\bb{b}_j'$ 
		explicitly. For this we denote by 
		\[\bb{x}^{\bb{m}_j}- \bb{x}^{\bb{n}_j}=\sum_{i\in J_{\Ip_2}} p_i(\bb{x})\cdot \left(\bb{x}^{\bb{u}_i}- \bb{x}^{\bb{v}_i}\right)\]
		a decomposition into the generators given by $\left.J_2\right._{\left(\bb{y}'=(\bb{1})\right)}$ where the $p_i(\bb{x})$ are polynomials in $\bb{x}$. Then 
		we split the $p_i$ into monomials and rearrange the summands to get the telescoping series
		\[\bb{x}^{\bb{m}_j}- \bb{x}^{\bb{n}_j}=\sum_{i_k} m_{i_k}(\bb{x})\cdot \left(\bb{x}^{\bb{u}_{i_k}}- \bb{x}^{\bb{v}_{i_k}}\right),\]
		\textit{i.e.} $m_{i_k}(\bb{x})\bb{x}^{\bb{u}_{i_k}}-m_{i_{k-1}}(\bb{x})\bb{x}^{\bb{v}_{i_{k-1}}}=0$. We may assume that all $m_{i_k}$ are positive, 
		because otherwise we interchange $\bb{x}^{\bb{u}_{i_k}}$ and $\bb{x}^{\bb{v}_{i_k}}$.
		If we insert the appropriate terms from $\field[\bb{y}'^{\pm 1}]$ on the right hand side we get a telescoping series in $\field[\bb{x},\bb{y}'^{\pm 1}]$
		\begin{equation}\label{eq:telescopingSum}
		 \sum_{i_k} \left(\left(\prod_{\nu < k} \bb{y}'^{\bb{c}_{i_\nu}}\right)m_{i_k}(\bb{x})\cdot \left(\bb{x}^{\bb{u}_{i_k}}- \bb{y}'^{\bb{c}_{i_k}}\cdot\bb{x}^{\bb{v}_{i_k}}\right)\right),
		\end{equation}
		where we take the negative $\bb{y}'$ exponent if we interchanged the $\bb{x}$ terms. But then this telescoping sum equals 
		$\bb{x}^{\bb{m}_j}- \bb{y}'^{\bb{b}_j'} \cdot \bb{x}^{\bb{n}_j}$, where 
		\[\bb{b}_j'=\sum_\nu \bb{c}_{i_\nu}.\]
		Hence, the exponent $\bb{b}_j'$ is a linear combination of the $\bb{c}_i$. Because $\bb{x}^{\bb{m}_j},\bb{x}^{\bb{n}_j}\notin J_i$ we have that $\bb{b}_j'$ 
		is unique. This means when rearranging the generators of $J_2$ to get the same $\bb{x}$ binomials as in $J_1$ we get the new $\bb{y}'$ exponents $\bb{b}_j'$ 
		as linear combinations of the $\bb{c}_i$. Thus, we get the following proposition:
		
		\begin{prop}\label{prop:representationofuniversalfamilies}
			Let $J_1$ and $J_2$ be two universal families given by equation (\ref{eqn:twouniversalfamilies}) such that 
			$\left.J_1\right._{\left(\bb{y}=(\bb{1})\right)}=\left.J_2\right._{\left(\bb{y}'=(\bb{1})\right)}$. Set $n_1:=\#(J_{\Ip_1})$ and $n_2:=\#(J_{\Ip_2})$. Then 
			there is an $n_1 \times n_2$ matrix $B_{1,2}$ and an $n_2 \times n_1$ matrix $B_{2,1}$ such that for the binomials 
			$\bb{x}^{\bb{m}_j}- \bb{y}'^{\bb{b}_j'} \cdot \bb{x}^{\bb{n}_j}\in J_2$ and $\bb{x}^{\bb{u}_j}- \bb{y}^{\bb{c}_j'} \cdot \bb{x}^{\bb{v}_j}\in J_1$ we have
			\begin{eqnarray*}
				\left(\bb{b}_j'\right)_{j\in J_{\Ip_1}} & = & \left(\bb{c}_i\right)_{i\in J_{\Ip_2}} \cdot B_{2,1} \\
				\left(\bb{c}_j'\right)_{j\in J_{\Ip_2}} & = & \left(\bb{b}_i\right)_{i\in J_{\Ip_1}} \cdot B_{1,2}.
			\end{eqnarray*}\qed
		\end{prop}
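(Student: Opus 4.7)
The plan is to formalize the telescoping construction sketched in the paragraphs immediately preceding the proposition statement, and then read off the matrices from the resulting $\IZ$-linear combinations. The argument is symmetric in the roles of $J_1$ and $J_2$, so constructing $B_{2,1}$ in detail is enough; swapping the two families yields $B_{1,2}$.

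For each $j\in J_{\Ip_1}$, I would start with the identity-fiber binomial $\bb{x}^{\bb{m}_j}-\bb{x}^{\bb{n}_j}$, obtained from the $j$-th generator of $J_1$ by setting $\bb{y}=(\bb{1})$. By the hypothesis $\left.J_1\right._{\left(\bb{y}=(\bb{1})\right)}=\left.J_2\right._{\left(\bb{y}'=(\bb{1})\right)}$, this binomial admits a polynomial decomposition in the generators $\bb{x}^{\bb{u}_i}-\bb{x}^{\bb{v}_i}$ of the right hand side. Splitting each polynomial coefficient into monomials and rearranging as in the text produces a telescoping sum
\[\bb{x}^{\bb{m}_j}-\bb{x}^{\bb{n}_j}=\sum_{k} m_{i_k}(\bb{x})\left(\bb{x}^{\bb{u}_{i_k}}-\bb{x}^{\bb{v}_{i_k}}\right),\]
with consecutive cancellations $m_{i_k}(\bb{x})\bb{x}^{\bb{u}_{i_k}}-m_{i_{k-1}}(\bb{x})\bb{x}^{\bb{v}_{i_{k-1}}}=0$, after swapping $\bb{u}_{i_k}\leftrightarrow \bb{v}_{i_k}$ when needed to keep the $m_{i_k}$ positive. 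Inserting the $\bb{y}'$ prefactors exactly as in equation (\ref{eq:telescopingSum}) lifts this identity to $\field[\bb{x},\bb{y}'^{\pm 1}]$ and, after the telescoping cancellations, collapses to
\[\bb{x}^{\bb{m}_j}-\bb{y}'^{\bb{b}_j'}\cdot\bb{x}^{\bb{n}_j}\in J_2,\qquad \bb{b}_j'=\sum_\nu \pm\bb{c}_{i_\nu},\]
a $\IZ$-linear combination of the $\bb{c}_i$. Uniqueness of $\bb{b}_j'$ follows because $\bb{x}^{\bb{n}_j}$ is a standard monomial for every $\A$-graded ideal in the ambient torus of $V_{\Ip_2}$ (by Remark \ref{rem:openCoverStandard} applied to $\mon_2$); if two different $\bb{b}_j',\bb{b}_j''$ produced such liftings in $J_2$, then evaluating their difference at a generic torus point would force $\bb{x}^{\bb{n}_j}$ into the corresponding $\A$-graded ideal, a contradiction.

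With $\bb{b}_j'$ well-defined for every $j\in J_{\Ip_1}$, I would set the $(i,j)$-entry of $B_{2,1}$ (indexed by $i\in J_{\Ip_2}$, $j\in J_{\Ip_1}$) equal to the signed multiplicity with which $\bb{c}_i$ enters the expansion of $\bb{b}_j'$. The matrix identity $(\bb{b}_j')_{j\in J_{\Ip_1}}=(\bb{c}_i)_{i\in J_{\Ip_2}}\cdot B_{2,1}$ then holds by construction, and the symmetric argument starting from the generators of $J_2$ yields $B_{1,2}$.

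The main obstacle I expect is justifying the telescoping rearrangement rigorously: one must verify that every polynomial decomposition of $\bb{x}^{\bb{m}_j}-\bb{x}^{\bb{n}_j}$ in the generators of $\left.J_2\right._{(\bb{y}'=(\bb{1}))}$ can be reorganised into a telescope whose consecutive intermediate monomials cancel, and that the insertion of $\bb{y}'$-prefactors preserves this telescoping. This reduces to the observation that in the identity fiber the common $\A$-graded standard monomials form a $\field$-basis of the quotient, so any rewriting chain from $\bb{x}^{\bb{m}_j}$ to $\bb{x}^{\bb{n}_j}$ stays inside the single $\A$-degree component of degree $\A\bb{m}_j=\A\bb{n}_j$ and terminates in finitely many steps; the path-independence of the cumulative $\bb{y}'$-exponent then follows from the uniqueness argument above.
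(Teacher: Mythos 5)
Your proposal takes essentially the same route as the paper: the proposition follows immediately from the telescoping decomposition sketched in the surrounding text, and you reproduce that construction (polynomial decomposition of the identity-fiber binomial, rearrangement into a telescope, insertion of $\bb{y}'$-prefactors, reading off $\bb{b}_j'$ as a $\IZ$-linear combination of the $\bb{c}_i$) together with the uniqueness observation, then define the matrix entries from the expansion coefficients. Your more careful treatment of uniqueness via standard monomials and your explicit flag that the telescoping rearrangement deserves rigorous justification are both reasonable elaborations, but the underlying argument is the one the paper uses.
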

		
		\begin{rem}
			If, on the other hand, we start with the $\bb{x}^{\bb{m}_j}- \bb{y}'^{\bb{b}_j'}\cdot \bb{x}^{\bb{n}_j}\in J_2$, then by the same argument as before we can 
			reconstruct the $\bb{x}^{\bb{u}_j}- \bb{y}'^{\bb{c}_j}\cdot \bb{x}^{\bb{v}_j}$, so that in fact
			\[J_2 = \left\langle\left. \bb{x}^{\bb{m}_j}- \bb{y}'^{\bb{b}_j'}\cdot \bb{x}^{\bb{n}_j}\,\right|\, j\in J_{\Ip_1} \right\rangle 
					+ \left\langle\left. \bb{x}^{\bb{m}_j} \,\right|\, j \notin J_{\Ip_1} \right\rangle.\]
		\end{rem}
		
		Using the above we can give a complete description of when two universal families give the same non-coherent component of the toric Hilbert scheme.
		
		\begin{thm}\label{thm:isomorphicuniversalfamilies}
			Two universal families $J_1$ and $J_2$ parametrise the ambient torus of the same reduced non-coherent component of the toric Hilbert scheme if and only if we have 
			$\left.J_1\right._{\left(\bb{y}=(\bb{1})\right)}=\left.J_2\right._{\left(\bb{y}'=(\bb{1})\right)}$, $r=s$ for $\bb{y}=(y_1,...,y_r)$ and 
			$\bb{y}'=(y_1',...,y_s')$, and there exists an isomorphism $\Phi \in \Gl(r,\IZ)$ such that $\Phi(\bb{b}_j)=\bb{b}_j'$ for $j\in J_{\Ip_1}$ in the notation 
			of Proposition \ref{prop:representationofuniversalfamilies}.
		\end{thm}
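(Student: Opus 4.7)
The plan is to prove both implications by translating between the two parametrisations and a torus isomorphism of their parameter tori, using Proposition \ref{prop:representationofuniversalfamilies} to place both families in a common presentation.

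For the forward implication, suppose $J_1$ and $J_2$ parametrise the ambient torus of the same reduced non-coherent component $V\subseteq\HA$. Lemma \ref{lem:sameidentity} immediately yields condition (1), and Corollary \ref{rem:dimensionofcomponent} forces $r=\dim V=s$, giving condition (2). Apply Proposition \ref{prop:representationofuniversalfamilies} together with the remark following it to rewrite $J_2$ with the same $\bb{x}$-binomial skeleton and index set $J_{\Ip_1}$ as $J_1$, in the form $\langle \bb{x}^{\bb{m}_j}-\bb{y}'^{\bb{b}_j'}\bb{x}^{\bb{n}_j}\mid j\in J_{\Ip_1}\rangle+\langle \bb{x}^{\bb{m}_j}\mid j\notin J_{\Ip_1}\rangle$. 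By Theorem \ref{thm:universalfamilyofcomponent} the two parametrisations $\phi_1,\phi_2$ are isomorphisms onto the ambient torus of $V$, so $\tau:=\phi_2^{-1}\circ\phi_1\colon(\field^*)^r\to(\field^*)^s$ is a torus isomorphism and is given by a matrix $A\in\Gl(r,\IZ)$ via $\tau(\bb{\lambda})^{\bb{v}}=\bb{\lambda}^{A\bb{v}}$. For any $\bb{\lambda}\in(\field^*)^r$ with $\bb{\mu}=\tau(\bb{\lambda})$, the equality $J_1|_{\bb{y}=\bb{\lambda}}=J_2|_{\bb{y}'=\bb{\mu}}$ forces $\bb{\lambda}^{\bb{b}_j}=\bb{\mu}^{\bb{b}_j'}=\bb{\lambda}^{A\bb{b}_j'}$ for every $j\in J_{\Ip_1}$ and every $\bb{\lambda}$; hence $\bb{b}_j=A\bb{b}_j'$, and $\Phi:=A^{-1}\in\Gl(r,\IZ)$ satisfies $\Phi(\bb{b}_j)=\bb{b}_j'$.

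For the converse, assume conditions (1)--(3). As above, condition (1) with Proposition \ref{prop:representationofuniversalfamilies} allows us to present $J_2$ using the same $\bb{x}$-binomials and index set $J_{\Ip_1}$ as $J_1$, with coefficients $\bb{y}'^{\bb{b}_j'}$. Using $\Phi$ from condition (3), define the torus isomorphism $\tau\colon(\field^*)^r\to(\field^*)^s$ by the matrix $A:=\Phi^{-1}$, so that $\tau(\bb{\lambda})^{\bb{v}}=\bb{\lambda}^{\Phi^{-1}\bb{v}}$ for all $\bb{v}\in\IZ^s$. Then $\tau(\bb{\lambda})^{\bb{b}_j'}=\bb{\lambda}^{\Phi^{-1}\bb{b}_j'}=\bb{\lambda}^{\bb{b}_j}$ for each $j\in J_{\Ip_1}$, so $J_1|_{\bb{y}=\bb{\lambda}}=J_2|_{\bb{y}'=\tau(\bb{\lambda})}$ for every $\bb{\lambda}$. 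Consequently the images of $\phi_1$ and $\phi_2$ in $\HA$ coincide, and by Theorem \ref{thm:universalfamilyofcomponent} they are respectively the ambient tori of $V_{\Ip_1}$ and $V_{\Ip_2}$; taking closures in $\HA$ identifies the two components.

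The main technical care required is bookkeeping of matrix conventions, since $\Phi(\bb{b}_j)=\bb{b}_j'$ is a relation between exponent vectors in the character lattice, whereas the torus isomorphism $\tau$ between $(\field^*)^r$ and $(\field^*)^s$ is naturally described via the inverse (dual) matrix. A secondary but essential step is invoking the remark after Proposition \ref{prop:representationofuniversalfamilies} to bring $J_1$ and $J_2$ into a common presentation indexed by $J_{\Ip_1}$ before comparing exponent vectors; once this common presentation is in place, the comparison of coefficients becomes a routine computation.
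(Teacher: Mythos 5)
Your proof is correct and follows essentially the same route as the paper's: conditions (1) and (2) come from Lemma \ref{lem:sameidentity} and Corollary \ref{rem:dimensionofcomponent}, the common $\bb{x}$-binomial presentation from Proposition \ref{prop:representationofuniversalfamilies} and its remark, and condition (3) is shown to be equivalent to the existence of a torus isomorphism carrying one parametrisation to the other. You make the matrix-convention bookkeeping and the two directions of the biconditional explicit, which the paper compresses into a single chain of equivalences, but the substance is the same.
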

		
		\begin{proof}
			If $J_1$ and $J_2$ parametrise the ambient torus of the same non-coherent component, then by Corollary \ref{rem:dimensionofcomponent} their dimensions must be the 
			same, so that $r=s$. We also get $\left.J_1\right._{\left(\bb{y}=(\bb{1})\right)}=\left.J_2\right._{\left(\bb{y}'=(\bb{1})\right)}$ by 
			Lemma \ref{lem:sameidentity} so that we can write
			\begin{eqnarray*}
			J_1 & = &  \left\langle\left. \bb{x}^{\bb{m}_j}- \bb{y}^{\bb{b}_j}\cdot \bb{x}^{\bb{n}_j}\,\right|\, j\in J_{\Ip_1} \right\rangle 
					+ \left\langle\left. \bb{x}^{\bb{m}_j} \,\right|\, j \notin J_{\Ip_1} \right\rangle \textrm{ and} \\
			J_2 & = &  \left\langle\left. \bb{x}^{\bb{m}_j}- \bb{y}'^{\bb{b}_j'}\cdot \bb{x}^{\bb{n}_j}\,\right|\, j\in J_{\Ip_1} \right\rangle 
					+ \left\langle\left. \bb{x}^{\bb{m}_j} \,\right|\, j \notin J_{\Ip_1} \right\rangle.
		\end{eqnarray*}
		It remains to show the equivalence to the $\IZ$-isomorphism $\Phi$. But $J_1$ and $J_2$ parametrise 
		the ambient torus of the same non-coherent component exactly if there is an isomorphism of their parametrising tori $\Spec(\field[\bb{y}^{\pm 1}])$ and 
		$\Spec(\field[\bb{y}'^{\pm 1}])$ which gives exactly the same points on $\HA$ by the two universal families $J_1$ and $J_2$. This means, if and only if there exists some 
		$\Phi': \field[\bb{y}^{\pm 1}] \rightarrow \field[\bb{y}'^{\pm 1}]$ such that 
		\[\Phi'(J_1)=\left\langle\left. \bb{x}^{\bb{m}_j}- \Phi'\left(\bb{y}^{\bb{b}_j}\right)\cdot \bb{x}^{\bb{n}_j}\,\right|\, j\in J_{\Ip_1} \right\rangle 
					+ \left\langle\left. \bb{x}^{\bb{m}_j} \,\right|\, j \notin J_{\Ip_1} \right\rangle=J_2.\]
		This is equivalent to an isomorphism $\Phi \in \Gl(r,\IZ)$ such that $\Phi(\bb{b}_j)=\bb{b}_j'$.
		\end{proof}
		
		\begin{rem}
			Construction \ref{cons:universalFamilyOfComponent} and Theorem \ref{thm:isomorphicuniversalfamilies} allows us to compute all non-coherent components of a toric 
			Hilbert Scheme for a given $\A$. First, one has to compute for each monomial $\A$-graded ideal $\mon$ the universal families of all non-coherent components containing 
			$\mon$. Then one collects all isomorphic universal families into one component. By doing this, one has for each component already the list of all monomial $\A$-graded ideals contained 
			in this component.
			
			Of course these computations are best done by a computer. Thus, all the constructions and algorithms in this work are contained in {\sc ToricHilbertSchemes} \cite{ToricHilbertSchemes},
			a package for {\sc Macaulay2} \cite{M2}, and all examples have been computed with this package.
		\end{rem}
		
		\begin{ex}\label{ex:1347}
			Let $\A=\left\{1,3,4,7\right\}$ with $d=1$. Consider the universal family
			\[J_1:=J_\mon(\Ip)= \left\langle b^2-y_3a^2c,bd^2-y_7ac^4,d^4-y_9c^7,a^3,ab,bc,ad,a^2c^2,ac^5\right\rangle\]
			for the coherent $\mon = \left\langle a^3,ab,b^2,bc,ad,a^2c^2,bd^2,ac^5,d^4\right\rangle$ and the universal family
			\[J_2:=J_{\mon_0}(\Ip_0)=\left\langle b^2-y_3a^2c,ac^4-y_7bd^2,d^4-y_9c^7,a^3,ab,bc,ad,a^2c^2,bd^3\right\rangle\]
			for the non-coherent $\mon_0=\left\langle a^3,ab,b^2,bc,ad,a^2c^2,ac^4,bd^3,d^4\right\rangle$. When substituting $\left(\bb{1}\right)$ we get
			\[\left.J_1\right._{\left(\bb{y}=(\bb{1})\right)}=\left\langle b^2-a^2c,bd^2-ac^4,d^4-c^7,a^3,ab,bc,ad,a^2c^2,ac^5\right\rangle\]
			and
			\[\left.J_2\right._{\left(\bb{y}=(\bb{1})\right)}=\left\langle b^2-a^2c,ac^4-bd^2,d^4-c^7,a^3,ab,bc,ad,a^2c^2,bd^3\right\rangle.\]
			Furthermore, since we have both, $ac^5=c(ac^4-bd^2)+d\cdot bc\in \left.J_1\right._{\left(\bb{y}=(\bb{1})\right)}$ as well as
			$bd^3=d(bd^2-ac^4)+ c^4\cdot ad \in \left.J_2\right._{\left(\bb{y}=(\bb{1})\right)}$, we get 
			$\left.J_1\right._{\left(\bb{y}=(\bb{1})\right)}=\left.J_2\right._{\left(\bb{y}=(\bb{1})\right)}$.\\
			The number of remaining variables in $J_1$ and in $J_2$ are $3$, hence $r=s$. Finally, we have
			\[ \bb{b}_1=\left(\begin{smallmatrix}-1\\0\\0\end{smallmatrix}\right),\, \bb{b}_2=\left(\begin{smallmatrix}0\\-1\\0\end{smallmatrix}\right),\, 
					\bb{b}_3=\left(\begin{smallmatrix}0\\0\\-1\end{smallmatrix}\right),\]
			\[\bb{b}_1'=\left(\begin{smallmatrix}-1\\0\\0\end{smallmatrix}\right),\, \bb{b}_2'=\left(\begin{smallmatrix}0\\1\\0\end{smallmatrix}\right),\, 
					\bb{b}_3'=\left(\begin{smallmatrix}0\\0\\-1\end{smallmatrix}\right),\]
			so that 
			\[\Phi = \left(\begin{array}{ccc} 1&0&0\\0&-1&0\\0&0&1\end{array}\right)\]
			is an isomorphism of the two universal families. Hence, $J_1$ and $J_2$ describe the same non-coherent component and both $\mon$ and $\mon_0$ lie on that component. When applying 
			Construction \ref{cons:universalFamilyOfComponent} to the remaining $51$ $\A$-graded monomial ideals we get that only six of them are also contained in a non-coherent 
			component, which is in fact the same component:
			\begin{eqnarray*}
				\mon_1 & = & \left\langle a^3,ab,b^2,bc,ad,a^2c^2,ac^4,bd^3,c^7\right\rangle,\\
				\mon_2 & = & \left\langle a^3,ab,a^2c,bc,ad,b^3,b^2d,bd^2,ac^5,d^4\right\rangle,\\
      	\mon_3 & = & \left\langle a^3,ab,b^2,bc,ad,a^2c^2,bd^2,ac^5,c^7\right\rangle,\\
      	\mon_4 & = & \left\langle a^3,ab,a^2c,bc,ad,b^3,b^2d,ac^4,bd^3,d^4\right\rangle,\\
      	\mon_5 & = & \left\langle a^3,ab,a^2c,bc,ad,b^3,b^2d,ac^4,bd^3,c^7\right\rangle, \textrm{ and}\\
      	\mon_6 & = & \left\langle a^3,ab,a^2c,bc,ad,b^3,b^2d,bd^2,ac^5,c^7\right\rangle,
			\end{eqnarray*}
			where all of them but $\mon_6$ are coherent. Thus, the component $V_{\Ip}$ given by $J_\mon(\Ip)$ contains $8$ monomial $\A$-graded ideals.\EX
		\end{ex}
		
		In this example it is quite clear from the equality of the two ideals when $(\bb{1})$ had been substituted that the two universal families define the same 
		non-coherent component. One could assume now that having the same $\A$-graded ideal given by the identity may suffice for two universal families to define the same component. 
		This would mean that Lemma \ref{lem:sameidentity} would in fact be an if and only if statement. The following is a counterexample to this.
		
		\begin{ex}[continuing \textbf{\ref{ex:The2by6example}}]
			Consider the $\A$-graded monomial ideal
			\begin{eqnarray*}
				\mon & = & \left\langle e^2,be,ae,cd,a^2c^2,b^3d,ac^{12},b^3c^9,b^2c^{12},ab^3c^8,d^3f^3,ab^2c^{11},d^2ef^4,d^4f^2,\right.\\
				  &   & \left.b^6c^6,d^5f,d^6,a^3d^2f^4,b^9c^5,b^{12}c^3,a^8df^5,b^{15}c^2,b^{18}\right\rangle
			\end{eqnarray*}
			with $23$ generators. The defining ideal of $\UM$ after removing redundant variables is
			\begin{eqnarray*}
				I_\mon' & = & \left\langle y_{15}y_{18}-y_{20}y_{21},y_{12}y_{13}y_{15}-y_2y_{12},y_{11}y_{13}y_{15}-y_2y_{11},\right.\\
						 &	 & \left. y_{20}y_{21}^2y_{23}-y_{11}y_{15},y_{18}y_{20}^2y_{21}-y_{11}y_{15},\right.\\
						 &	 & \left. y_{13}y_{15}y_{20}y_{21}-y_2y_{20}y_{21},y_{11}y_{15}y_{20}y_{21}-y_{12}y_{13}\right\rangle
			\end{eqnarray*}
			in $\field[y_1,...,y_{23}]$ which has a primary decomposition into $12$ primary ideals. Two of them are
			\begin{eqnarray*}
				\Iq_1 & = & \left\langle y_{11},y_{12},y_{18}^2,y_{18}y_{21},y_{21}^2, y_{15}y_{18}-y_{20}y_{21},y_{13}y_{15}-y_2\right\rangle \textrm{ and}\\
				\Iq_2 & = & \left\langle y_{11},y_{12},y_{18},y_{21}\right\rangle,
			\end{eqnarray*}
			where $\Iq_1$ is not reduced. The prime ideal $\Ip_1=\left\langle y_{11},y_{12},y_{18},y_{21},y_2-y_{13}y_{15}\right\rangle$ is the radical of the primary ideal $\Iq_1$. 
			The two reduced components corresponding to these prime ideals are given by the universal families
			\begin{eqnarray*}
				J_1 & = & \left\langle be-y_1y_3ac^2,d^2ef^4-y_1b^5c^8,b^6c^6-y_3ad^2f^4,b^{12}c^3-y_4a^6df^5,\right. \\
						&		& \left. b^{18}-y_5a^{11}f^6\right\rangle + \left\langle e^2,ae,cd,a^2c^2,b^3d,ac^{12},b^3c^9,b^2c^{12},ab^3c^8,\right. \\
						&		& \left. d^3f^3,ab^2c^{11},d^4f^2,d^5f,d^6,a^3d^2f^4,b^9c^5,a^8df^5,b^{15}c^2\right\rangle
			\end{eqnarray*}
			and 
			\begin{eqnarray*}
				J_2 & = & \left\langle be-y_2ac^2,d^2ef^4-y_1b^5c^8,b^6c^6-y_3ad^2f^4,b^{12}c^3-y_4a^6df^5,\right. \\
						&		& \left. b^{18}-y_5a^{11}f^6\right\rangle + \left\langle e^2,ae,cd,a^2c^2,b^3d,ac^{12},b^3c^9,b^2c^{12},ab^3c^8,\right.\\
						&		& \left. d^3f^3,ab^2c^{11},d^4f^2,d^5f,d^6,a^3d^2f^4,b^9c^5,a^8df^5,b^{15}c^2\right\rangle,
			\end{eqnarray*}
			where we have mapped $y_{13},y_{15},y_{20},y_{23}$ to $y_1,y_3,y_4,y_5$ and have replaced $y_2$ by $y_1y_3$ in $J_1$.
			Not only are these two not isomorphic by construction, also the first one is four-dimensional and the second five-dimensional. But substituting 
			$\bb{y} =\left(\bb{1}\right)$ in $J_1$ and $J_2$ gives the same ideal
			\[\renewcommand{\arraystretch}{1.15}\begin{array}{l}\left\langle be-ac^2,d^2ef^4-b^5c^8,b^6c^6-ad^2f^4,b^{12}c^3-a^6df^5,b^{18}-a^{11}f^6\right\rangle + \\
				\left\langle e^2,ae,cd,a^2c^2,b^3d,ac^{12},b^3c^9,b^2c^{12},ab^3c^8,d^3f^3,\right.\\
				\left.ab^2c^{11},d^4f^2,d^5f,d^6,a^3d^2f^4,b^9c^5,a^8df^5,b^{15}c^2\right\rangle.\end{array}\renewcommand{\arraystretch}{1}\]
			Furthermore, as can be seen from $\Ip_1$ and $\Ip_2$, the reduced component $V_{\Ip_1}$ is an embedded component in $V_{\Ip_2}$.\EX		
		\end{ex}
		
%%%%%%%%%%%%%%%%%%%%%%%%%%%%%%%
\section{The Polytope}        %
%%%%%%%%%%%%%%%%%%%%%%%%%%%%%%%

		This section is about the construction of the polytope of a non-coherent component. Recall, that the coherent component is given by the state polytope of the toric 
		ideal $I_{\A}$. For a non-coherent component we will show that the polytope is again a state polytope. Unfortunately, $J_\mon(\Ip)$ and $J_\mon(\Ip)'$ are not 
		necessarily homogeneous with respect to a strictly positive grading. But the non-coherent component given by the universal family is the projective closure, so that 
		we want to homogenise the universal family. Thus, we define a last little modification we will be using to construct the polytope whose normal fan is the normalisation 
		of the reduced non-coherent component $V_{\Ip}$.
	  
	  \begin{df}\label{df:generalisedUniversalFamily}
	  	Let $\mon$ be a monomial $\A$-graded ideal, $\Ip$ a prime ideal as in Proposition \ref{prop:coherentprimaryideals} or 
	  	\ref{prop:incoherentprimaryideals}, and 
	  	\[J_\mon(\Ip)=\left\langle\left. \bb{x}^{\bb{m}_j}- \bb{y}^{\bb{b}_j}\cdot s_j\,\right|\, j\in J_{\Ip} \right\rangle + 
	  			\left\langle\left. \bb{x}^{\bb{m}_j} \,\right|\, j \notin J_{\Ip}\right\rangle\]
	  	in $\field\left[\left.\bb{x},y_i^{\pm 1} \,\right|\, i \in \rv(\Ip)\right]$ the universal family of the component 
	  	$V_{\Ip}$. Then we consider an additional set of variables $\{z_i \,|\, i \in \rv(\Ip)\}$ and define the \emph{generalised universal family of the 
	  	component $V_{\Ip}$} as
	  	\[\JMP=\left\langle\left. \bb{z}^{\bb{b}_j^+}\bb{y}^{\bb{b}_j^-}\cdot\bb{x}^{\bb{m}_j}- \bb{z}^{\bb{b}_j^-}\bb{y}^{\bb{b}_j^+}\cdot s_j
	  				\,\right|\, j\in J_{\Ip} \right\rangle + \left\langle\left. \bb{x}^{\bb{m}_j}\,\right|\, j \notin J_{\Ip}\right\rangle\]
	  	in $\field[\bb{x},y_i,z_i \,|\, i \in \rv(\Ip)]$.
	  \end{df}
	  
	  \begin{remnn}
	  	Note that $\JMP$ is just a homogenisation of $J_\mon(\Ip)'$.
	  \end{remnn}
		
		\begin{ex}[continuing \textrm{\ref{ex:The2by6example}}]
			The universal family was
			\begin{eqnarray*}
	  	 		J_\mon(\Ip) & = & \left\langle ae-y_3y_4b^2, bf^6-y_1c^6e^5, b^4ce^4-\frac{y_3y_4}{y_2}d^3f^3,\right.\\
	  	 						 &   & \left. ad^3f^3-y_2b^6ce^3, ab^9-y_3d^6, d^6e-y_4b^{11}\right\rangle + \\
	  	 						 &	 & \left\langle bc^2, ac^2, cd, abcf, a^2cf, b^3cf, a^2bf^2, a^3f^2,\right.\\
	  	 		    		 &   & \left. ab^3f^2, c^8e^3, b^5f^2, df^5, d^2f^4, af^6,d^4f^2, b^9c\right\rangle
	  	\end{eqnarray*}
	  	in $\field[a,b,c,d,e,f,y_1,y_2,y_3,y_4]$ so that the generalised universal family is
	  	 \begin{eqnarray*}
	  	 		\JMP & = & \left\langle z_3z_4ae-y_3y_4b^2, z_1bf^6-y_1c^6e^5, y_2z_3z_4b^4ce^4-y_3y_4z_2d^3f^3,\right.\\
	  	 												 &   & \left. z_2ad^3f^3-y_2b^6ce^3, z_3ab^9-y_3d^6, z_4d^6e-y_4b^{11} \right\rangle +\\
	  	 												 &	 & \left\langle bc^2, ac^2, cd, abcf, a^2cf, b^3cf, a^2bf^2, a^3f^2,\right.\\
	  	 												 &   & \left. ab^3f^2, c^8e^3, b^5f^2, df^5, d^2f^4, af^6,d^4f^2, b^9c\right\rangle
	  	 \end{eqnarray*}
	  	 in $\field[a,...,f,y_1,...,y_4,z_1,...,z_4]$.
	  \end{ex}
		
		We have done all the preliminary work now, so that we can start directly with the theorem and the rest of the section will construct the proof of it in four steps.	  
	  
	  \begin{thm}\label{thm:ThePolytope}
	  	Let $\mon$ be a monomial $\A$-graded ideal and consider a generalised universal family $\JMP \subseteq \field[\bb{x},y_i,z_i \,|\, i \in \rv(\Ip)]$ of 
	  	a reduced component $V_{\Ip}$ containing $\mon$. Then $\JMP$ is homogeneous with respect to a strictly positive grading and the 
	  	normalisation of the component $V_{\Ip}$ is the toric variety defined by the normal fan of the state polytope $\state(\JMP)$, \textit{i.e.} the Gröbner fan of $\JMP$. 
	  \end{thm}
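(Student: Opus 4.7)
The plan is to split the statement into the homogeneity claim and the identification of the normalisation, reducing the latter to an analogue of Theorem \ref{thm:theCoherentComponent} with $I_{\A}$ replaced by $\JMP$. For homogeneity: since $0 \notin \pH(\A)$ by assumption, there is $\omega \in N_{\IQ}$ with $\omega(a_i) > 0$ for all $i$. Extend the $M$-grading to the lattice $\widetilde M := M \oplus \IZ^{\rv(\Ip)}$ on $\field[\bb{x}, y_i, z_i \mid i \in \rv(\Ip)]$ by setting $\Deg(x_j) := a_j$ and $\Deg(y_i) = \Deg(z_i) := e_i$, the $i$-th standard basis vector of $\IZ^{\rv(\Ip)}$; taking $\omega$ on the $M$-summand and any strictly positive values on the $\IZ^{\rv(\Ip)}$-summand yields a strictly positive functional. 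For a binomial generator $\bb{z}^{\bb{b}_j^+}\bb{y}^{\bb{b}_j^-}\bb{x}^{\bb{m}_j} - \bb{z}^{\bb{b}_j^-}\bb{y}^{\bb{b}_j^+} s_j$ of $\JMP$, the two $\bb{x}$-parts share the same $\A$-degree because $s_j$ is the $\mon$-standard monomial of degree $\Deg(\bb{x}^{\bb{m}_j})$ and $\mon$ is $\A$-graded, while the two $\bb{y}\bb{z}$-parts both have $\widetilde M$-degree $\bb{b}_j^+ + \bb{b}_j^- \in \IZ^{\rv(\Ip)}$; the pure monomial generators $\bb{x}^{\bb{m}_j}$ for $j \notin J_{\Ip}$ are trivially homogeneous.

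For the second half I would exhibit a bijective correspondence between Gröbner cones of $\JMP$ and torus-orbit closures in the normalisation of $V_{\Ip}$. The state polytope $\state(\JMP)$ is well-defined once we have a strictly positive grading, and its normal fan is by definition the Gröbner fan of $\JMP$. The open cone (no initial term taken) should correspond to the ambient torus $T_{\Ip}$, parametrised by Theorem \ref{thm:universalfamilyofcomponent} via specialisations of $\JMP$ at points of $(\field^*)^{\rv(\Ip)}$ with $\bb{z} = \bb{1}$. A general weight vector $\omega$ on $\bb{y}, \bb{z}$ degenerates $\JMP$ to $\init_\omega(\JMP)$, which, after being rewritten in universal-family form analogous to Construction \ref{cons:universalFamilyOfComponent}, should parametrise a boundary torus orbit of $V_{\Ip}$. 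By Corollary \ref{cor:EveryComponentHasPolytope} the normalisation is the toric variety of some polytope, and matching these cones with the orbit structure forces that polytope to be $\state(\JMP)$.

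The hardest step will be establishing that this correspondence is well-defined and bijective. In one direction I need to verify that every $\init_\omega(\JMP)$, after reduction, yields a universal family whose parametrised orbit actually lies on $V_{\Ip}$; this should follow from the flatness of the universal family over $T_{\Ip}$ together with the projective completeness of $V_{\Ip}$. In the other direction I need to check that two weight vectors produce the same initial ideal exactly when their degenerated families are equivalent via the $\Gl(\rv(\Ip), \IZ)$-classification of Theorem \ref{thm:isomorphicuniversalfamilies}. For this I would combine Lemma \ref{lem:sameidentity}, which pins an ideal down by its fibre at $\bb{y} = \bb{1}$, with the exponent-matrix control of Proposition \ref{prop:representationofuniversalfamilies}, reducing the question to an elementary equivalence of weight data on the lattice $\IZ^{\rv(\Ip)}$.
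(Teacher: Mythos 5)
Your homogeneity argument is correct and essentially mirrors the paper: assign both $y_i$ and $z_i$ the same degree so that the $\bb{z}^{\bb{b}_j^+}\bb{y}^{\bb{b}_j^-}$-parts on both sides of a binomial generator have equal degree, combine this with a strictly positive functional on the row span of $\A$ to handle the $\bb{x}$-parts, and conclude. The paper formulates this directly as the weight vector $(\bb{a},\bb{1},\bb{1})$ rather than as a functional on a multigrading lattice, but this is a cosmetic difference.

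For the identification of the normalisation, however, your proposal has a genuine gap. You reduce the problem to "matching these cones with the orbit structure," and then assert that this "forces the polytope to be $\state(\JMP)$" via Corollary \ref{cor:EveryComponentHasPolytope}. But a bijective correspondence between Gr\"obner cones of $\JMP$ and torus-orbit closures of $V_{\Ip}$ determines the \emph{combinatorics} of the fan, not the fan itself. Two fans can be combinatorially equivalent (same poset of cones, same torus-orbit stratification) while being non-isomorphic as fans, hence giving non-isomorphic toric varieties. Knowing by Corollary \ref{cor:EveryComponentHasPolytope} that the normalisation of $V_{\Ip}$ is the toric variety of \emph{some} polytope $P_V$ does not help, since you have no independent access to the cones of $P_V$ to compare with those of $\state(\JMP)$ as rational cones inside the same $\IQ$-vector space; the argument is circular unless the comparison is made explicit. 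Your two directions of verification (flatness/completeness to show each $\init_\omega(\JMP)$ stays on $V_{\Ip}$, and the $\Gl(\rv(\Ip),\IZ)$-classification of Theorem \ref{thm:isomorphicuniversalfamilies} for the equivalence of degenerated families) are both at the level of orbit combinatorics and never touch the affine coordinate rings.

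What is missing is the cone-to-chart computation that the paper packages as Lemma~\ref{lem:localAffineChart} (together with Proposition~\ref{prop:JMPcong}): the maximal Gr\"obner cone $\sigma$ corresponding to $\mon_1 = \mon'_{(\bb{y}=\bb{z}=(\bb{1}))}$ has dual cone $\sigma^{\vee}$ generated by the explicit vectors $v_i = \bigl(\begin{smallmatrix}\bb{m}_i - \bb{n}_i \\ -\bb{b}_i \\ \bb{b}_i\end{smallmatrix}\bigr)$ from Proposition~\ref{lem:coneForInitial}, and the affine chart $(\UM)_{\textrm{red}} \cap V_{\Ip}$ is $\Spec$ of precisely the semigroup algebra generated by the Laurent monomials with those exponent vectors. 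Only this identification lets you conclude $\Spec(\field[\sigma^{\vee}\cap M_\sigma])$ is the normalisation of that affine chart. The final gluing into a global normalisation morphism is then done by torus-equivariance: all the chart-level maps send the identity point to the same ideal $\left. J_\mon\right._{(\bb{y}=(\bb{1}))}$, so they patch uniquely. Without the explicit monoid-algebra identification your approach cannot nail down the polytope, so the proof as sketched does not go through.
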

	  
	  For more details on state polytopes see \cite[Chapters 1-3]{Sturmfels:Groebner}. Before we prove the theorem we have to show four steps we will use in the proof.
	  
	  \begin{lem}\label{lem:initialIdealSub}
	  	Let $\mon'\subset \field[\bb{x},\bb{y},\bb{z}]$ be an initial monomial ideal of $\JMP$ with respect to a term order on $\field[\bb{x},\bb{y},\bb{z}]$. 
	  	Then $\mon_1 := \mon'_{\left(\bb{y}=\bb{z}=(\bb{1})\right)}$ is a monomial $\A$-graded ideal in the component $V_{\Ip}$. Furthermore, $\mon'$ is the only initial monomial 
	  	ideal of $\JMP$ with $\mon_1 = \mon'_{\left(\bb{y}=\bb{z}=(\bb{1})\right)}$.
	  \end{lem}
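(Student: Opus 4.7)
The plan is to reduce the statement to facts about initial ideals of the $\A$-graded ideal $I_{1} := \JMP\big|_{\bb{y}=\bb{z}=(\bb{1})}$, which by construction of the generalised universal family and Theorem \ref{thm:universalfamilyofcomponent} is the $\A$-graded ideal on $V_{\Ip}$ corresponding to the torus point $(\bb{1})\in T_{\Ip}$ via $J_{\mon}(\Ip)$.

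First I would analyse the defining binomials $g_{j} = \bb{z}^{\bb{b}_j^+}\bb{y}^{\bb{b}_j^-}\bb{x}^{\bb{m}_j} - \bb{z}^{\bb{b}_j^-}\bb{y}^{\bb{b}_j^+}s_{j}$ of $\JMP$ for $j\in J_{\Ip}$: under the term order $\prec$ exactly one of the two monomials is the leading term, and upon setting $\bb{y}=\bb{z}=(\bb{1})$ these two monomials collapse to $\bb{x}^{\bb{m}_j}$ and $s_{j}$ respectively. Hence the choice $\prec$ makes for each $g_j$ transfers to a choice of leading monomial for the binomial $\bb{x}^{\bb{m}_j}-s_{j}$ of $I_{1}$, and refines to a term order $\prec_{\bb{x}}$ on $\field[\bb{x}]$. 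My claim is that $\mon_{1}$ coincides with $\init_{\prec_{\bb{x}}}(I_{1})$; granting this, $\mon_{1}$ is $\A$-graded because initial ideals preserve the multigraded Hilbert function, and it lies on $V_{\Ip}$ because it arises from $I_{1}\in V_{\Ip}$ as the limit under a one-parameter subgroup of the $n$-torus $T$ acting on $S$, and $V_{\Ip}$ is a closed, $T$-invariant subscheme of $\HA$.

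For the uniqueness, I would run the correspondence in reverse. Since $\mon_{1}$ is $\A$-graded and the monomials $\bb{x}^{\bb{m}_j}$ and $s_{j}$ share the $M$-degree $\Deg(\bb{m}_{j})\in \IN\A$, exactly one of them belongs to $\mon_{1}$. This uniquely determines which summand of $g_{j}$ is the initial term under $\prec$, and so determines $\mon'$ provided one knows that its generators are the initial terms of the $g_{j}$ ($j\in J_{\Ip}$) together with the pure monomials $\bb{x}^{\bb{m}_j}$ for $j\notin J_{\Ip}$.

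The main obstacle is exactly this last point: verifying that these leading terms generate all of $\mon'$, i.e.\ that the listed generators of $\JMP$ already form a Gr\"obner basis with respect to $\prec$, so that no extra monomials enter $\mon'$ via $S$-pair reductions. I expect this to follow from flatness of $\JMP$ viewed as a family over the parameter torus $\Spec\field[\bb{y}^{\pm 1},\bb{z}^{\pm 1}]$, which is implicit in the fact that every closed fibre inside that torus is an $\A$-graded ideal on $V_{\Ip}$ with the prescribed Hilbert function; this forces the Hilbert function of $\mon'$ to agree with that of $\JMP$ and, after specialisation at $\bb{y}=\bb{z}=(\bb{1})$, with that of $I_{1}$, giving the needed equality $\mon_{1}=\init_{\prec_{\bb{x}}}(I_{1})$.
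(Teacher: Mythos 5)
Your first-claim argument takes a different route from the paper, and it contains an unproven step that is not obviously repairable: you assert that the choice of leading terms under $\prec$ ``refines to a term order $\prec_{\bb{x}}$ on $\field[\bb{x}]$'' and that $\mon_1 = \init_{\prec_{\bb{x}}}(I_1)$. Neither point is automatic. The comparison $\prec$ imposes on a pair $\bb{z}^{\bb{b}_j^+}\bb{y}^{\bb{b}_j^-}\bb{x}^{\bb{m}_j}$ vs.\ $\bb{z}^{\bb{b}_j^-}\bb{y}^{\bb{b}_j^+}s_j$ does not descend to a comparison of the bare $\bb{x}$-monomials, so there is no canonical $\prec_{\bb{x}}$, and even granting a suitable $\prec_{\bb{x}}$, equality with $\mon_1$ would already require the very Gr\"obner-basis analysis you defer. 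The paper sidesteps this completely: it observes that the action of the full torus $\mathcal{T}=(\field^*)^{n+n_{\bb{y}}+n_{\bb{z}}}$ commutes with the specialisation $\bb{y}=\bb{z}=(\bb{1})$ up to an $\bb{x}$-torus twist, namely $\left(\lambda.\JMP\right)_{(\bb{y}=\bb{z}=(\bb{1}))}=(\lambda_1,\dots,\lambda_n).\bigl(\JMP\bigr)_{(\bb{y}=\lambda_{\bb{y}},\bb{z}=\lambda_{\bb{z}})}$, so every specialised $\mathcal{T}$-translate lies in $V_{\Ip}$, and then uses \cite[Theorem 15.17]{Eisenbud:CommAlg} directly on $\init_{\omega}(\JMP)$ and the closedness of $V_{\Ip}$. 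No intermediate $\field[\bb{x}]$-term order is needed.

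For the uniqueness you have correctly located the main obstacle (that the binomial generators plus some monomials form a Gr\"obner basis for $\prec$), but the proposed fix via flatness is not a proof; flatness over $\Spec\field[\bb{y}^{\pm1},\bb{z}^{\pm1}]$ controls Hilbert functions along the base, not the combinatorics of $S$-pair reductions under a term order on $\field[\bb{x},\bb{y},\bb{z}]$, and agreement of Hilbert functions alone cannot distinguish between two monomial $\A$-graded ideals that share a few generators. More seriously, you work throughout with the presentation of $\JMP$ in terms of $\mon$'s generators $\bb{x}^{\bb{m}_j}$ and $\mon$-standard monomials $s_j$, but this is the wrong presentation: when the initial ideal corresponds to a different monomial $\A$-graded ideal $\mon_1$ on $V_{\Ip}$, those generators are not aligned with $\prec$ and need not form a Gr\"obner basis. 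The paper's essential move is to re-present $\JMP$ in terms of $\mon_1$'s data: it builds the generalised universal family $\widetilde{J_{\mon_1}(\Ip_1)}$ from $\mon_1$, invokes Theorem~\ref{thm:isomorphicuniversalfamilies} to identify it with $\JMP$ after a $\Gl(\IZ)$-change of $\bb{y},\bb{z}$-coordinates, and only then proves (via the telescoping-sum reduction of equation~(\ref{eq:telescopingSum})) that the new generators together with the monomials already in $\JMP$ are a reduced Gr\"obner basis. Since all the data entering that basis is determined by $\mon_1$ alone, $\mon'$ is too. This re-presentation step, resting on Theorem~\ref{thm:isomorphicuniversalfamilies}, is the missing idea in your proposal.
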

	  
	  \begin{proof}
	  	Recall that the number of $\bb{x}$ variables is $n$. We denote by $n_{\bb{y}}$ and $n_{\bb{z}}$ the number of $\bb{y}$ and $\bb{z}$ variables, respectively. 
	  	Then the torus
	  	\[\mathcal{T} := \left(\field^*\right)^{n+n_{\bb{y}}+n_{\bb{z}}}\]
	  	acts on $\field[\bb{x},\bb{y},\bb{z}]$ coordinate-wise. Let $\lambda=\left(\lambda_1,...,\lambda_n,\lambda_{\bb{y}},\lambda_{\bb{z}}\right)\in \mathcal{T}$ 
	  	be arbitrary, where  $\lambda_{\bb{y}}$ and $\lambda_{\bb{z}}$ denote the coordinates acting respectively on $\bb{y}$ and $\bb{z}$. Note that $(\lambda_1,...,\lambda_n)$ 
	  	is in fact an element of the torus $T$ that acts coordinate-wise on $S=\field[x_1,...,x_n]$. By trivial extension to $\bb{y}$ and $\bb{z}$ such an element 
	  	$(\lambda_1,...,\lambda_n)$ also acts on $\field[\bb{x},\bb{y},\bb{z}]$. Then one can easily 
	  	check that
	  	\[\begin{array}{rcl}
	  				\left(\lambda . \JMP\right)_{\left(\bb{y}=\bb{z}=(\bb{1})\right)} & = & 
	  						\left(\left(\lambda_1,...,\lambda_n\right) . \JMP\right)_{\left(\bb{y}=\lambda_{\bb{y}},\bb{z}=\lambda_{\bb{z}}\right)}\\
	  																													& = & \left(\lambda_1,...,\lambda_n\right) . \left(\left(\JMP\right)_{\left(\bb{y}=\lambda_{\bb{y}},\bb{z}=\lambda_{\bb{z}}\right)}\right)
	  		\end{array}\]
	  	holds. Since $\left(\JMP\right)_{\left(\bb{y}=\lambda_{\bb{y}},\bb{z}=\lambda_{\bb{z}}\right)}$ is $\A$-graded and in $V_{\Ip}$ for every $\lambda\in \mathcal{T}$ 
	  	and the $n$-torus orbit of an $\A$-graded ideal in $V_{\Ip}$ is an $\A$-graded ideal in $V_{\Ip}$, we also have that
	  	\[\left(\lambda . \JMP\right)_{\left(\bb{y}=\bb{z}=(\bb{1})\right)}\]
	  	is an $\A$-graded ideal in $V_{\Ip}$ for every $\lambda\in\mathcal{T}$. Moreover, the monomial ideal $\mon'$ is given as the initial monomial ideal with respect to 
	  	a weight vector $\omega \in \IN^{n+n_{\bb{y}}+n_{\bb{z}}}$. Hence, by using \cite[Theorem 15.17]{Eisenbud:CommAlg} we get that
	  	\[\mon_1=\mon'_{\left(\bb{y}=\bb{z}=(\bb{1})\right)} = \left(\init_{\omega}\left(\JMP\right)\right)_{\left(\bb{y}=\bb{z}=(\bb{1})\right)}\]
	  	is an $\A$-graded ideal in $V_{\Ip}$ and since $\mon'$ is monomial, $\mon_1$ is too.
	  	
	  	For the second part, we fix the minimal set of generators $\left\{f_1,...,f_l\right\}$ of $\mon_1$ and 
	  	denote by $s_i$ the standard monomial in the degree of $f_i$. Since $\mon_1$ is a monomial $\A$-graded ideal in $V_{\Ip}$, we get another generalised 
	  	universal family 
	  	\[\widetilde{J_{\mon_1}(\Ip_1)} = \left\langle\left. \bb{z}^{\bb{c}_i^+}\bb{y}^{\bb{c}_i^-}f_i - \bb{z}^{\bb{c}_i^-}\bb{y}^{\bb{c}_i^+}s_i \,\right|\, i \in J_{\Ip_1}\right\rangle + 
	  						\left\langle\left. f_i \,\right|\, i \notin J_{\Ip_1}\right\rangle, \]
	  	which also gives $V_{\Ip}$. Thus $\widetilde{J_{\mon_1}(\Ip_1)}$ is isomorphic to $\JMP$, so that in fact
	  	\[\JMP = \left\langle\left. \bb{z}^{\bb{b}_i^+}\bb{y}^{\bb{b}_i^-}f_i - \bb{z}^{\bb{b}_i^-}\bb{y}^{\bb{b}_i^+}s_i \,\right|\, i \in J_{\Ip_1}\right\rangle + 
	  						\left\langle\left. f_i \,\right|\, i \notin J_{\Ip_1}\right\rangle \]
	  	holds after a suitable change of $\bb{y}$ and $\bb{z}$ coordinates in $\widetilde{J_{\mon_1}(\Ip_1)}$ as in Theorem \ref{thm:isomorphicuniversalfamilies}. Because 
	  	all $s_i$ are not in $\mon_1$ the 
	  	$\bb{z}^{\bb{b}_i^-}\bb{y}^{\bb{b}_i^+}s_i$ are also not in $\mon'$, so that we can choose a term order $\prec$ on $\field[\bb{x},\bb{y},\bb{z}]$ such that
	  	\[\bb{z}^{\bb{b}_i^-}\bb{y}^{\bb{b}_i^+}s_i \prec \bb{z}^{\bb{b}_i^+}\bb{y}^{\bb{b}_i^-}f_i.\]
	  	Then we claim that
	  	\[G_{\mon_1} := \left\{\left.\bb{z}^{\bb{b}_i^+}\bb{y}^{\bb{b}_i^-}f_i - \bb{z}^{\bb{b}_i^-}\bb{y}^{\bb{b}_i^+}s_i \,\right|\, i \in J_{\Ip_1}\right\} \cup 
	  									\left\{ f_i \,\left|\, i \notin J_{\Ip_1} \right.\right\}\]
	  	together with a possibly empty set of monomials in $\field[\bb{x},\bb{y},\bb{z}]$ is the reduced Gröbner basis of $\JMP$ with respect to the term order 
	  	$\prec$. To show this, we start with a pair of binomials 
	  	\[\bb{z}^{\bb{b}_i^+}\bb{y}^{\bb{b}_i^-}f_i - \bb{z}^{\bb{b}_i^-}\bb{y}^{\bb{b}_i^+}s_i,\;\bb{z}^{\bb{b}_j^+}\bb{y}^{\bb{b}_j^-}f_j - \bb{z}^{\bb{b}_j^-}\bb{y}^{\bb{b}_j^+}s_j 
	  			\;\textrm{ in }\, G_{\mon_1}\] 
	  	and compute their S-polynomial $\bb{z}^{\bb{b}^+}\bb{y}^{\bb{b}^-}\bb{x}^{\bb{m}} - \bb{z}^{\bb{b}^-}\bb{y}^{\bb{b}^+}\bb{x}^{\bb{n}}$. Then there are two 
	  	cases, either we have that $\bb{z}^{\bb{b}^+}\bb{y}^{\bb{b}^-}\bb{x}^{\bb{m}}$ and $\bb{z}^{\bb{b}^-}\bb{y}^{\bb{b}^+}\bb{x}^{\bb{n}}$ are not in $\JMP$ or 
	  	they both are. First assume they are not. Then by using Construction \ref{cons:localcoherentequations} if $\mon_1$ is coherent or Construction 
	  	\ref{cons:localincoherentequations} if $\mon_1$ is non-coherent, $\bb{x}^{\bb{m}}-\bb{x}^{\bb{n}}$ reduces to zero via the binomials $f_i-s_i$ for 
	  	$i \in J_{\Ip_1}$ because both monomials reduce to the standard monomial in their common degree. But then we can use the telescoping sum 
	  	(\ref{eq:telescopingSum}) again to reduce $\bb{z}^{\bb{b}^+}\bb{y}^{\bb{b}^-}\bb{x}^{\bb{m}} - \bb{z}^{\bb{b}^-}\bb{y}^{\bb{b}^+}\bb{x}^{\bb{n}}$ via the 
	  	$\bb{z}^{\bb{b}_i^+}\bb{y}^{\bb{b}_i^-}f_i - \bb{z}^{\bb{b}_i^-}\bb{y}^{\bb{b}_i^+}s_i$ in $G_{\mon_1}$ to zero because the exponents $\bb{b}_i$ satisfy 
	  	the local equations. 
	  	
	  	On the other hand, if $\bb{z}^{\bb{b}^+}\bb{y}^{\bb{b}^-}\bb{x}^{\bb{m}}$ and $\bb{z}^{\bb{b}^-}\bb{y}^{\bb{b}^+}\bb{x}^{\bb{n}}$ are both in $\JMP$ then 
	  	their difference reduces either to a monomial in $\field[\bb{x},\bb{y},\bb{z}]$ or to zero. Therefore, $G_{\mon_1}$ together with a possibly empty set of 
	  	monomials is a Gröbner basis for $\prec$. Furthermore, the set $G_{\mon_1}$ itself is reduced, because no $f_i$ divides any $s_j$ and hence no 
	  	$\bb{z}^{\bb{b}_i^+}\bb{y}^{\bb{b}_i^-}f_i$ divides any $\bb{z}^{\bb{b}_j^-}\bb{y}^{\bb{b}_j^+}s_j$. The additional monomials do also not reduce any 
	  	element of $G_{\mon_1}$, because on the one side reducing one of the binomials would result in $\bb{z}^{\bb{b}_i^-}\bb{y}^{\bb{b}_i^+}s_i$ being in 
	  	$\JMP$ which is a contradiction, and on the other side the remaining $f_i$ are minimal generators of $\mon_1$ so that there cannot be any 
	  	$\bb{x}^{\bb{u}}\in \JMP$ dividing $f_i$. Thus, $G_{\mon_1}\cup\left\{\textrm{monomials in }\field[\bb{x},\bb{y},\bb{z}]\right\}$ is the reduced Gröbner 
	  	basis of $\JMP$ with respect to the term order $\prec$. Therefore, $\mon'$ is the unique initial monomial ideal of $\JMP$ that gives $\mon_1$ when 
	  	substituting $1$, because the reduced Gröbner basis is given by $\mon_1$.
	  \end{proof}
	  
	  Now we know the correspondence between initial monomial ideals of $\JMP$ and monomial ideals in $V_{\Ip}$. Next, we are interested in the Gröbner cone of these initial 
	  ideals.
	  
	  \begin{prop}\label{lem:coneForInitial}
	  	Let $\mon'$ be an initial monomial ideal of $\JMP$ with monomial $\A$-graded ideal 
	  	$\mon_1=\mon'_{\left(\bb{y}=\bb{z}=(\bb{1})\right)}=\left\langle \bb{x}^{\bb{m}_1},...,\bb{x}^{\bb{m}_l}\right\rangle$. Then the cone of maximal 
	  	dimension in the Gröbner fan of $\JMP$ corresponding to $\mon'$ is 
	  	\[\sigma = \left\{ \omega\,\left|\, \langle \omega, v_i\rangle\geq 0,\, \bb{x}^{\bb{m}_i}\notin \JMP\right.\right\}\quad\quad \textrm{for}\]
	  	\[v_i = \left(\begin{smallmatrix} \bb{m}_i-\bb{n}_i\\ -\bb{b}_i\\ \bb{b}_i \end{smallmatrix}\right),\]
	  	where $\bb{x}^{\bb{n}_i}$ is the standard monomial in the degree of $\bb{x}^{\bb{m}_i}$ and $\bb{b}_i\in \IZ^{\# \rv(\Ip)}$ unique such that 
	  	$\bb{z}^{\bb{b}_i^+}\cdot\bb{y}^{\bb{b}_i^-}\cdot \bb{x}^{\bb{m}_i}-\bb{z}^{\bb{b}_i^-}\cdot\bb{y}^{\bb{b}_i^+}\cdot \bb{x}^{\bb{n}_i} \in \JMP$.
	  \end{prop}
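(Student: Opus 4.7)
The plan is to leverage the explicit reduced Gröbner basis of $\JMP$ established inside the proof of Lemma \ref{lem:initialIdealSub}. Fix a term order $\prec$ on $\field[\bb{x},\bb{y},\bb{z}]$ realising $\mon' = \init_{\prec}(\JMP)$; such an order exists by hypothesis on $\mon'$. The proof of that lemma shows that the associated reduced Gröbner basis is
\[G_{\mon_1} = \bigl\{ \bb{z}^{\bb{b}_i^+}\bb{y}^{\bb{b}_i^-}\bb{x}^{\bb{m}_i} - \bb{z}^{\bb{b}_i^-}\bb{y}^{\bb{b}_i^+}\bb{x}^{\bb{n}_i} \,\bigm|\, i \in J_{\Ip_1}\bigr\} \cup \bigl\{ \bb{x}^{\bb{m}_i} \,\bigm|\, i \notin J_{\Ip_1}\bigr\},\]
possibly augmented by further monomials in $\field[\bb{x},\bb{y},\bb{z}]$. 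By Definition \ref{df:generalisedUniversalFamily}, the indices $i \in J_{\Ip_1}$ are precisely those with $\bb{x}^{\bb{m}_i} \notin \JMP$, and the exponent $\bb{b}_i$ attached to each such binomial is uniquely determined by $\JMP$ once the $\mon_1$-standard monomial $\bb{x}^{\bb{n}_i}$ in degree $\bb{m}_i$ is fixed.

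Next I would invoke the standard description of maximal Gröbner cones (see \cite[Chapter 2]{Sturmfels:Groebner}): the cone $\sigma$ of maximal dimension in the Gröbner fan of $\JMP$ corresponding to $\mon'$ equals the closure of the set of weight vectors $\omega \in \IR^{n+2\#\rv(\Ip)}$ selecting, on every element of $G_{\mon_1}$, the same initial term as $\prec$ does. Purely monomial elements impose no constraint, since any weight vector selects a monomial as its own leading term. Writing $\omega = (\omega_x, \omega_y, \omega_z)$, the inequality forcing $\bb{z}^{\bb{b}_i^+}\bb{y}^{\bb{b}_i^-}\bb{x}^{\bb{m}_i}$ to dominate $\bb{z}^{\bb{b}_i^-}\bb{y}^{\bb{b}_i^+}\bb{x}^{\bb{n}_i}$ simplifies, using $\bb{b}_i = \bb{b}_i^+ - \bb{b}_i^-$, to
\[\langle \omega_x, \bb{m}_i - \bb{n}_i\rangle - \langle \omega_y, \bb{b}_i\rangle + \langle \omega_z, \bb{b}_i\rangle \geq 0,\]
which is exactly $\langle \omega, v_i\rangle \geq 0$ with $v_i$ as claimed. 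The cone $\sigma$ is full-dimensional, as the open $\prec$-cone provides an interior point.

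The main difficulty is essentially bookkeeping: I need to confirm that no additional monomials appearing in the reduced Gröbner basis contribute further inequalities (they cannot, as noted above) and that the exponents $\bb{b}_i$ read off from $G_{\mon_1}$ coincide with those prescribed by $\JMP$ in the proposition (rather than some equivalent representative). Both points are immediate from Lemma \ref{lem:initialIdealSub} together with Definition \ref{df:generalisedUniversalFamily}, so once the reduced Gröbner basis is in hand the remainder of the proof is just the short computation above.
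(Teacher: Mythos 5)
Your proof is correct and follows essentially the same route as the paper's: fix a term order $\prec$ realising $\mon'$, appeal to the reduced Gr\"obner basis of $\JMP$ identified in the proof of Lemma \ref{lem:initialIdealSub}, note that the monomial elements impose no constraints, and read off the inequalities $\langle\omega,v_i\rangle\geq 0$ from the binomial elements. Your treatment is if anything slightly more careful than the paper's (you explicitly pass to the closure and remark on full-dimensionality via the $\prec$-cone, where the paper only describes the relative interior and then silently closes up), but there is no substantive difference in method.
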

	  
	  \begin{proof}
	  	Let $\prec$ be a term order on $\field[\bb{x},\bb{y},\bb{z}]$ such that $\mon'=\init_{\prec}\left(\JMP\right)$. Then as shown in the proof of Lemma \ref{lem:initialIdealSub} 
	  	\[\left\{\bb{z}^{\bb{b}_i^+}\cdot\bb{y}^{\bb{b}_i^-}\cdot \bb{x}^{\bb{m}_i}-\bb{z}^{\bb{b}_i^-}\cdot\bb{y}^{\bb{b}_i^+}\cdot \bb{x}^{\bb{n}_i}
	  			\,\left|\, \bb{x}^{\bb{m}_i}\notin \JMP\right.\right\} \cup \left\{\bb{x}^{\bb{m}_i} \in \JMP\right\}\]
	  	is a subset of the reduced Gröbner basis of $\JMP$ with respect to $\prec$ that contains all binomials of the reduced Gröbner basis. Thus, the relative interior of the 
	  	corresponding Gröbner cone $\sigma$ is given by all 
	  	$\omega \in \IZ^{n+n_{\bb{y}}+n_{\bb{z}}}$ such that
	  	\[\init_{\omega}\left(\bb{z}^{\bb{b}_i^+}\cdot\bb{y}^{\bb{b}_i^-}\cdot \bb{x}^{\bb{m}_i}-\bb{z}^{\bb{b}_i^-}\cdot\bb{y}^{\bb{b}_i^+}\cdot \bb{x}^{\bb{n}_i}\right) = 
	  			\bb{z}^{\bb{b}_i^+}\cdot\bb{y}^{\bb{b}_i^-}\cdot \bb{x}^{\bb{m}_i}\]
	  	for all $\bb{x}^{\bb{m}_i}\notin \JMP$. But this holds exactly if
	  	\[\sigma = \left\{ \omega\,\left|\, \langle \omega, v_i\rangle\geq 0,\, \bb{x}^{\bb{m}_i}\notin \JMP\right.\right\}\quad\quad \textrm{for}\]
	  	\[v_i = \left(\begin{smallmatrix} \bb{m}_i-\bb{n}_i\\ -\bb{b}_i\\ \bb{b}_i \end{smallmatrix}\right).\]
	  \end{proof}
	  
	  Before we give the last lemma needed for the proof of the theorem we state a short proposition we need for this lemma.
	  
	  \begin{prop}\label{prop:JMPcong}
	  	Let 
	  	\[\JMP=\left\langle\left. \bb{z}^{\bb{b}_i^+}\bb{y}^{\bb{b}_i^-}\cdot\bb{x}^{\bb{m}_i}- \bb{z}^{\bb{b}_i^-}\bb{y}^{\bb{b}_i^+}\cdot \bb{x}^{\bb{n}_i}
	  				\,\right|\, i\in J_{\Ip} \right\rangle + \left\langle\left. \bb{x}^{\bb{m}_i}\,\right|\, i \notin J_{\Ip}\right\rangle\]
	  	be the generalised universal family of an irreducible component $V_{\Ip}$ containing $\mon$. Then we have
	  	\[\field[\bb{y}^{\bb{b}_i}\,|\, i\in J_{\Ip}, \bb{y}=(y_j)_{j\in \rv(\Ip)}]\cong 
	  			\field[\bb{z}^{\bb{b}_i}\bb{y}^{-\bb{b}_i}\bb{x}^{\bb{m}_i-\bb{n}_i}\,|\, i\in J_{\Ip}, \bb{y}=(y_j)_{j\in \rv(\Ip)}].\] 
	  \end{prop}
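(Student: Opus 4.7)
My plan is to realise both rings as semigroup algebras and to exhibit an explicit isomorphism between them. Setting $R_1 := \field[\bb{y}^{\bb{b}_i}\,|\, i\in J_{\Ip}]$ and $R_2 := \field[\bb{z}^{\bb{b}_i}\bb{y}^{-\bb{b}_i}\bb{x}^{\bb{m}_i-\bb{n}_i}\,|\, i\in J_{\Ip}]$, the former is the semigroup algebra of the subsemigroup of $\IZ^{\#\rv(\Ip)}$ generated by the $\bb{b}_i$, while the latter is the semigroup algebra of the subsemigroup of $\IZ^n\oplus\IZ^{\#\rv(\Ip)}\oplus\IZ^{\#\rv(\Ip)}$ generated by the vectors $(\bb{m}_i-\bb{n}_i,-\bb{b}_i,\bb{b}_i)$. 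The natural candidate for the isomorphism is the map $\phi\colon R_1 \to R_2$ defined by $\bb{y}^{\bb{b}_i} \mapsto \bb{z}^{\bb{b}_i}\bb{y}^{-\bb{b}_i}\bb{x}^{\bb{m}_i-\bb{n}_i}$ on generators.

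The well-definedness of $\phi$ is the main obstacle. It reduces to the implication: if $\sum_i\alpha_i\bb{b}_i = \sum_j\beta_j\bb{b}_j$ with $\alpha_i,\beta_j \in \IN$, then $\sum_i\alpha_i(\bb{m}_i-\bb{n}_i) = \sum_j\beta_j(\bb{m}_j-\bb{n}_j)$, since the $\bb{y}$ and $\bb{z}$ blocks of the corresponding target relation match the source tautologically. To establish this, I would invoke the action of the $n$-torus $T=(\field^*)^n$ on $\HA$ by coordinate-wise scaling of the $x_i$. Because $T$ is connected, it preserves every irreducible component $V_{\Ip}$, and by Lemma \ref{lem:diagonaltorusaction} it restricts to translations on the ambient torus $T_{\Ip}$. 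The resulting group homomorphism $T \to T_{\Ip}$ induces a $\IZ$-linear map of character lattices $X^*(T_{\Ip}) \to X^*(T)$. A direct computation of the $T$-action on the binomial $\bb{x}^{\bb{m}_i}-\lambda^{\bb{b}_i}\bb{x}^{\bb{n}_i}$, renormalising so the leading coefficient remains $1$, shows that the coefficient $\lambda^{\bb{b}_i}$ transforms by the factor $t^{-(\bb{m}_i-\bb{n}_i)}$, so the character $\bb{b}_i \in X^*(T_{\Ip})$ pulls back to $-(\bb{m}_i-\bb{n}_i) \in X^*(T)$. The mere existence of this $\IZ$-linear map is what yields the desired lift of every $\IN$-relation.

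The remaining properties are then formal. Surjectivity of $\phi$ is immediate, as every generator of $R_2$ lies in the image. For injectivity, I would compose $\phi$ with the ring homomorphism $R_2 \hookrightarrow \field[\bb{x}^{\pm 1},\bb{y}^{\pm 1},\bb{z}^{\pm 1}] \to \field[\bb{z}^{\pm 1}]$ obtained by specialising $\bb{x}=\bb{y}=1$; the composite is the tautological semigroup map $\bb{y}^{\bb{b}_i} \mapsto \bb{z}^{\bb{b}_i}$, which is clearly an isomorphism onto $\field[\bb{z}^{\bb{b}_i}]$, hence injective. Consequently $\phi$ itself is injective, completing the proof that $\phi$ is a ring isomorphism between $R_1$ and $R_2$.
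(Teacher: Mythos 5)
Your proof is correct and takes a genuinely different route from the paper's. The paper establishes the isomorphism in two steps: first a diagonal embedding identifies $\field[\bb{y}^{\bb{b}_i}]$ with $\field[\bb{z}^{\bb{b}_i}\bb{y}^{-\bb{b}_i}]$, and then it argues that the relations among the $\bb{x}^{\bb{m}_i-\bb{n}_i}$ agree with those among the $\bb{b}_i$ because both arise from S-polynomial reductions in the construction of the local equations $I_\mon$ and the subsequent base change. You instead observe that the connected torus $T=(\field^*)^n$ preserves $V_{\Ip}$, acts diagonally by Lemma \ref{lem:diagonaltorusaction}, hence restricts to translations on $T_{\Ip}$, and the resulting map of character lattices sends $\bb{b}_i$ to $-(\bb{m}_i-\bb{n}_i)$; the mere existence of this $\IZ$-linear map forces every additive relation among the $\bb{b}_i$ to descend to the $\bb{m}_i-\bb{n}_i$, which is exactly the well-definedness needed, and injectivity falls out of the tautological $\bb{z}$-block via specialisation. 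What your approach buys is conceptual transparency and a cleanly isolated direction of implication: the paper's justification is terse and, read literally, argues that relations among the $\bb{x}^{\bb{m}_i-\bb{n}_i}$ transfer to relations among the $\bb{y}^{\bb{b}_i}$, whereas the well-definedness of the semigroup map actually requires the reverse containment; your torus-action argument supplies that reverse containment directly and makes no appeal to the details of Mora reduction or the specific form of the $\Phi_A$ base changes. What the paper's approach buys is self-containedness within its Gr\"obner-theoretic framework and a tighter connection to the explicit constructions of the preceding sections.
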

	  
	  \begin{proof}
	  	The first step is that for $\bb{y}=\left(y_j\right)_{j\in \rv(\Ip)}$ and $\bb{z}=\left(z_j\right)_{j\in \rv(\Ip)}$
	  	\[\field[\bb{y}^{\bb{b}_i}\,|\, i\in J_{\Ip}]\cong \field[\bb{z}^{\bb{b}_i}\bb{y}^{-\bb{b}_i}\,|\, i\in J_{\Ip}]\]
	  	holds, since this is just the diagonal embedding. Secondly, 
	  	\[\field[\bb{z}^{\bb{b}_i}\bb{y}^{-\bb{b}_i}\,|\, i\in J_{\Ip}] \cong \field[\bb{z}^{\bb{b}_i}\bb{y}^{-\bb{b}_i}\bb{x}^{\bb{m}_i-\bb{n}_i}\,|\, i\in J_{\Ip}]\]
	  	holds, because the relations on the $\bb{x}^{\bb{m}_i-\bb{n}_i}$ are the same as the relations on the $\bb{y}^{\bb{b}_i}$. In fact, any relation between the 
	  	$\bb{x}^{\bb{m}_i-\bb{n}_i}$ can be interpreted as the reduction of an S-polynomial in two of them. But this is exactly the construction of the local equations 
	  	which have been used for the base change to the $\bb{y}^{\bb{b}_i}$, so that these also satisfy these relations.
	 	\end{proof}
	 	
	 	\begin{lem}\label{lem:localAffineChart}
	 		Let $V_{\Ip}$ be a reduced irreducible component containing an $\A$-graded monomial ideal $\mon$ and let $\left\{\bb{b}_i\,|\,i\in J_{\Ip}\right\}$ be the exponent vectors in the 
	 		generalised universal family obtained from $\mon$ for this component. Then
	 		\[\left(\UM\right)_{\textrm{red}} \cap V_{\Ip} = \Spec(\field[\bb{z}^{\bb{b}_i}\bb{y}^{-\bb{b}_i}\bb{x}^{\bb{m}_i-\bb{n}_i}\,|\, i\in J_{\Ip}, \bb{y}=(y_j)_{j\in \rv(\Ip)}])\]
	 		holds.
	 	\end{lem}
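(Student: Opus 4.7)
The plan is to unwind the chain of transformations in Construction \ref{cons:universalFamilyOfComponent} on the level of coordinate rings, track where the various $y$-variables go, and then invoke Proposition \ref{prop:JMPcong}. By Remark \ref{rem:localaffinechart}, we already have the identification
\[V_{\Ip}\cap (\UM)_{\textrm{red}} \cong \Spec\bigl(\field[y_i \,|\, i \in \rv']/\Ip'\bigr),\]
where $\Ip'$ is the pure binomial prime ideal obtained after removing the single variables in $\Ip$ from $I_{\mon}'$. So it suffices to identify this coordinate ring with $\field[\bb{z}^{\bb{b}_i}\bb{y}^{-\bb{b}_i}\bb{x}^{\bb{m}_i-\bb{n}_i}\,|\, i\in J_{\Ip}]$.

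Next I would apply the composition $\Phi := \Phi_{A_h}\circ\cdots\circ\Phi_{A_1}$ of torus isomorphisms from Construction \ref{cons:Isomorphism} that reduce $\Ip'$ to the zero ideal. As a map of Laurent polynomial rings $\field[y_i^{\pm 1}\,|\, i \in \rv']\to \field[y_j^{\pm 1}\,|\, j \in \rv(\Ip)]$, $\Phi$ is an isomorphism, and it sends every generator of $\Ip'$ to zero. Restricting $\Phi$ to the non-inverted subring $\field[y_i\,|\, i\in \rv']/\Ip'$ therefore embeds it as the $\field$-subalgebra of $\field[y_j^{\pm 1}\,|\, j \in \rv(\Ip)]$ generated by the images $\Phi(y_i)$ for $i\in \rv'$. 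By construction of $J_{\mon}(\Ip)$, each such image is precisely the Laurent monomial $\bb{y}^{\bb{b}_i}$ appearing as the coefficient of $s_i$ in the $i$-th generator of the universal family.

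I then need to upgrade the indexing set from $\rv'$ to $J_{\Ip}$. Since $\rv'\subseteq J_{\Ip}$ (for $k\in \rv'$, one has $p_k(\bb{y})=y_k\notin \Ip$), one inclusion is immediate. For the reverse inclusion, take $j\in J_{\Ip}\setminus \rv'$: then $y_j$ was redundant and is replaced by $p_j(\bb{y})=\prod_{k\in \rv} y_k^{c_{jk}}$, and the condition $j\in J_{\Ip}$ forces $c_{jk}=0$ whenever $y_k\in \Ip$. Hence $p_j(\bb{y})\in \field[y_k\,|\, k\in \rv']$, and applying $\Phi$ gives $\bb{y}^{\bb{b}_j}=\prod_{k\in \rv'}(\bb{y}^{\bb{b}_k})^{c_{jk}}$, which already lies in $\field[\bb{y}^{\bb{b}_k}\,|\, k\in \rv']$. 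Thus the subring is exactly $\field[\bb{y}^{\bb{b}_j}\,|\, j\in J_{\Ip}]$, and applying Proposition \ref{prop:JMPcong} yields the claimed identification.

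The main obstacle is the bookkeeping in the last step: one must be sure that the coordinate ring picked up by the embedding $\Phi$ really is generated by the $\bb{y}^{\bb{b}_i}$ indexed by $J_{\Ip}$, and that no superfluous relations appear beyond those already encoded in Proposition \ref{prop:JMPcong}. Once it is verified that passing from the generators indexed by $\rv'$ to those indexed by $J_{\Ip}$ only adds nonnegative integer combinations of the old generators, the rest of the argument is essentially formal.
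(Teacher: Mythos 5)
Your proposal is correct and mirrors the paper's proof: Remark \ref{rem:localaffinechart} to land in $\Spec\left(\field[y_i\,|\,i\in\rv']/\Ip'\right)$, the composed change of coordinates from Construction \ref{cons:Isomorphism} to realise this ring as $\field[\bb{y}^{\bb{b}_i}\,|\,i\in\rv']$ inside $\field[y_j^{\pm 1}\,|\,j\in\rv(\Ip)]$, the observation that the indices $j\in J_{\Ip}\setminus\rv'$ contribute only nonnegative integer combinations of exponents already present, and Proposition \ref{prop:JMPcong} to finish. One small imprecision: the composite $\Phi:\field[y_i^{\pm 1}\,|\,i\in\rv']\to\field[y_j^{\pm 1}\,|\,j\in\rv(\Ip)]$ is not an isomorphism of Laurent rings (each $\Phi_{A_\nu}$ is, but setting $y_1'=1$ is a quotient); what you actually need, namely that the induced map on $\field[y_i\,|\,i\in\rv']/\Ip'$ is injective, holds because the kernel of $\Phi$ on the Laurent ring is the extension of $\Ip'$, which contracts back to $\Ip'$ since $\Ip'$ is prime and contains no $y_i$, hence is saturated with respect to the coordinate monomials.
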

	 	
	 	\begin{proof}
	 		To prove this we will go through the construction of the $\bb{b}_i$ exponents of $\bb{y}$ and $\bb{z}$ again. The affine chart of the 
	 		reduced structure of the irreducible component determined by $\Ip$ that contains $\mon$ is given by 
			\[\left(\UM\right)_{\textrm{red}}\cap V_{\Ip} \cong \Spec\left(\field[y_i\,|\, i \in \rv']/\Ip'\right),\]
			by Remark \ref{rem:localaffinechart}. Now let $A_1,...,A_h$ be the matrices used for the reduction as in Construction \ref{cons:Isomorphism} to remove the binomials 
			in $\Ip'$, let $A:=A_h\cdot ... \cdot A_1$ be their product, and we set $\tilde{\bb{b}_i}$ to be the $i$-th column of $A$. Thus, we get the surjective 
			morphism
			\[\begin{array}{ccccc}
					\Phi_A : &  \field[y_i\,|\, i \in \rv'] & \rightarrow & \field[\bb{y}^{\tilde{\bb{b}_i}} \,|\, i \in \rv', \bb{y}=(y_j)_{j\in \rv'}]/_{\left\langle y_k-1\right\rangle}&\\
									 &  y_i													& \mapsto & \bb{y}^{\tilde{\bb{b}_i}}&
				\end{array}\]
			for $k\in \rv'\setminus \rv(\Ip)$. By construction of $A$ the kernel of $\Phi_A$ is exactly $\Ip'$ so that we have
			\[\field[y_i\,|\, i \in \rv']/\Ip' \cong \field[\bb{y}^{\tilde{\bb{b}_i}} \,|\, i \in \rv', \bb{y}=(y_j)_{j\in \rv'}]/_{\left\langle y_k-1\right\rangle}.\]
			Now we set $y_k$ to $1$ for $k\in \rv'\setminus \rv(\Ip)$ by projecting the $\tilde{\bb{b}_i}$ to the $\rv(\Ip)$ variables, \textit{i.e.} if $\pi$ is the 
			projection from the $\rv'$ variables to the $\rv(\Ip)$ variables, then with	$\bb{b}_i:=\pi(\tilde{\bb{b}_i})$ we have
			\[\field[y_i\,|\, i \in \rv']/\Ip' \cong \field[\bb{y}^{\bb{b}_i} \,|\, i \in \rv', \bb{y}=(y_j)_{j\in \rv(\Ip)}],\]
			where the $\bb{b}_i$ are precisely as defined before. Note that the indices of the removed redundant variables are $J_{\Ip}\setminus \rv'$. But 
			for $i\in J_{\Ip}\setminus \rv'$ the resulting exponent of $J_\mon$ is $\bb{b}_i=\sum_{j\in \rv'} \lambda_j \bb{b}_j$ for $\lambda_j \in \IN$, 
			because $y_i$ was a redundant variable. Thus, adding $\bb{y}^{\bb{b}_i}$ for $i\in J_{\Ip}\setminus \rv'$ to the generators of the ring does not 
			change the ring, so that
			\[\field[y_i\,|\, i \in \rv']/\Ip' \cong \field[\bb{y}^{\bb{b}_i} \,|\, i \in J_{\Ip}, \bb{y}=(y_j)_{j\in \rv(\Ip)}].\]
			Finally, by using Proposition \ref{prop:JMPcong} we get
			\[\field[\bb{y}_i\,|\, i \in \rv']/\Ip' \cong \field[\bb{z}^{\bb{b}_i}\cdot\bb{y}^{-\bb{b}_i}\cdot\bb{x}^{\bb{m}_i-\bb{n}_i} \,|\, i \in J_{\Ip}, \bb{y}=(y_j)_{j\in \rv(\Ip)}].\]
	 	\end{proof}
	 	
	 	Note that for the coherent component this is similar to the construction in the proof of \cite[Theorem 4.1]{StillmanSturmfelsThomas:Algorithms}.
	 	
	 	Now we have collected all steps and can prove the theorem.
	  
	  \begin{proof}[Proof of Theorem \ref{thm:ThePolytope}]
	  	First of all, note that 
	  	\[\JMP=\left\langle\left. \bb{z}^{\bb{b}_j^+}\cdot\bb{y}^{\bb{b}_j^-}\cdot \bb{x}^{\bb{m}_j}-
	  		 \bb{z}^{\bb{b}_j^-}\cdot\bb{y}^{\bb{b}_j^+}\cdot \bb{x}^{\bb{n}_j}\,\right|\, j\in J_{\Ip} \right\rangle 
					+ \left\langle\left. \bb{x}^{\bb{m}_j} \,\right|\, j \notin J_{\Ip} \right\rangle\]
			is homogeneous with respect to a strictly positive grading for the degree vector $(\bb{a},\bb{1},\bb{1})$, where $\bb{a}$ is a strictly positive vector in 
			the row span of $\A$ and $\bb{1}$ is the classical degree vector on $\bb{y}$ and $\bb{z}$. Thus, by \cite[Theorem 2.5]{Sturmfels:Groebner} there exists a state polytope 
			\[P := \state(\JMP).\] 
			Now let $\sigma$ be a maximal cone in the normal fan of $P$, \textit{i.e.} in the Gröbner fan. This gives an initial monomial ideal 
			$\mon'\subseteq \field[\bb{x},\bb{y},\bb{z}]$ of $\JMP$, which in turn by Lemma \ref{lem:initialIdealSub} when substituting $\bb{y}=\bb{z}=(\bb{1})$ gives a 
			monomial $\A$-graded ideal $\mon_1$ in $V_{\Ip}$. There are two cases, either $\mon_1$ is the original monomial ideal $\mon$ we used to construct $\JMP$ or 
			$\mon_1$ is some other monomial ideal on that component.
			
			For the first case, assume this monomial is $\mon$. Then by Proposition \ref{lem:coneForInitial} the cone $\sigma$ is given by 
			$\{\omega\,|\, \langle \omega,v_i\rangle \geq 0\}$ for 
			\[v_i:= \left(\begin{smallmatrix}\bb{m}_i-\bb{n}_i\\ -\bb{b}_i\\ \bb{b}_i \end{smallmatrix}\right), i\in J_{\Ip}.\]
			Hence, we have that $\sigma^{\vee}$ is the positive hull over $\IQ$ of $\left\{ v_i \,|\, i \in J_{\Ip}\right\}$. The affine chart of $V_{\Ip}$, that contains $\mon$, is given by 
			\[\left(\UM\right)_{\textrm{red}} \cap V_{\Ip} \cong \Spec\left(\field[y_i\,|\, i \in \rv']/\Ip'\right),\]
			see Remark \ref{rem:localaffinechart}. But by Lemma \ref{lem:localAffineChart} we have 
			\[\left(\UM\right)_{\textrm{red}} \cap V_{\Ip} \cong \Spec(\field[\bb{z}^{\bb{b}_i}\bb{y}^{-\bb{b}_i}\bb{x}^{\bb{m}_i-\bb{n}_i}\,|\, i\in J_{\Ip}, \bb{y}=(y_j)_{j\in \rv(\Ip)}])\]
			and the exponent vectors on the right hand side are the $v_i$, the generators of the cone $\sigma^{\vee}$. Thus, if we denote by 
			$M_{\sigma}:=\left(\IQ\cdot\sigma^{\vee}\right) \cap \IZ^{n+n_{\bb{y}}+n_{\bb{z}}}$ the 
			lattice of $\sigma^{\vee}$, we conclude that
			\[\Spec\left(\field[\sigma^{\vee}\cap M_{\sigma}]\right)\]
			is the normalisation of 
			\[\Spec\left(\field[y_i\,|\, i \in \rv']/\Ip'\right),\]
			the affine chart of $V_{\Ip}$ containing $\mon$.
			
			Secondly, let $\mon_1\neq \mon$. Then for the monomial ideal $\mon_1$ we 
			get another universal family 
			\[J_{\mon_1}=\left\langle\left. \bb{x}^{\bb{u}_j}- \bb{y}'^{\bb{c}_j}\cdot \bb{x}^{\bb{v}_j}\,\right|\, j\in J_{\Ip_1} \right\rangle 
					+ \left\langle\left. \bb{x}^{\bb{u}_j} \,\right|\, j \notin J_{\Ip_1} \right\rangle\]
			where $\Ip_1$ is the prime ideal that gives $V_{\Ip}$ for $\mon_1$. By Theorem \ref{thm:isomorphicuniversalfamilies} there is an isomorphism 
			$\Phi : \field[\bb{y}'^{\pm 1}] \rightarrow \field[\bb{y}^{\pm 1}]$, such that $\Phi(J_{\mon_1})=J_\mon$. Thus, if we apply $\Phi$ (extended to $\bb{z}'$) to the general universal 
			family we get
			\begin{eqnarray*}
				\Phi(\widetilde{J_{\mon_1}(\Ip_1)}) & = & \left\langle\left. \bb{z}^{\Phi(\bb{c}_j)^+}\cdot\bb{y}^{\Phi(\bb{c}_j)^-}\cdot \bb{x}^{\bb{u}_j}-
	  		 \bb{z}^{\Phi(\bb{c}_j)^-}\cdot\bb{y}^{\Phi(\bb{c}_j)^+}\cdot \bb{x}^{\bb{v}_j}\,\right|\, j\in J_{\Ip_1} \right\rangle \\
					& & + \left\langle\left. \bb{x}^{\bb{u}_j} \,\right|\, j \notin J_{\Ip_1} \right\rangle\\
					& = & \JMP.
			\end{eqnarray*}
			Then Proposition \ref{lem:coneForInitial} implies, as it did for $\mon$, that $\sigma^{\vee}$ is generated by
			\[v_i':= \left(\begin{smallmatrix} \bb{u}_i-\bb{v}_i\\ -\Phi(\bb{c}_i)\\ \Phi(\bb{c}_i)\end{smallmatrix}\right), i\in J_{\Ip_1}.\]
			On the other hand we have $\left(\mathcal{U}_{\mon_1}\right)_{\textrm{red}}\cap V_{\Ip} \cong \Spec\left(\field[\bb{y}']/\Ip_1'\right)$ by Remark 
			\ref{rem:localaffinechart} and by the same argumentation as above we get
			\[\field[\bb{y}']/\Ip_1' \cong \field[\bb{z}'^{\bb{c}_i}\cdot\bb{y}'^{-\bb{c}_i}\cdot\bb{x}^{\bb{u}_i-\bb{v}_i} \,|\, i \in J_{\Ip_1}, \bb{y}=(y_j)_{j\in \rv(\Ip_1)}].\]
			But $\Phi$ is an isomorphism over $\IZ$ so if we apply $\Id_{\field[\bb{x}]}\otimes \Phi$ (again extended to $\bb{z}'$) to the right hand side we get
			\[\field[\bb{z}'^{\Phi(\bb{c}_i)}\cdot\bb{y}'^{-\Phi(\bb{c}_i)}\cdot\bb{x}^{\bb{u}_i-\bb{v}_i} \,|\, i \in J_{\Ip_1}, \bb{y}=(y_j)_{j\in \rv(\Ip_1)}],\]
			which implies that $\Spec\left(\field[\sigma^{\vee}\cap M_{\sigma}]\right)$ is the normalisation of the affine chart of $V_{\Ip}$ containing $\mon_1$. Note that for every 
			maximal cone $\sigma$ we get the same lattice $M_{\sigma}=:M_{\Ip}$.
			
			The normalisation maps for all maximal cones which we have just constructed each sends the identity point to the ideal 
			\[\left.J_\mon\right._{\left(\bb{y}=(\bb{1})\right)}=\left\langle\left. \bb{x}^{\bb{m}_j}- \bb{x}^{\bb{n}_j}\,\right|\, j\in J_{\Ip} \right\rangle 
					+ \left\langle\left. \bb{x}^{\bb{m}_j} \,\right|\, j \notin J_{\Ip} \right\rangle \subset V_{\Ip}.\] 
			Clearly, all these normalisation maps are equivariant under the 
			action of the torus $\Hom(M_{\Ip},\field^*)$. Hence, there exists a unique $\Hom(M_{\Ip},\field^*)$-equivariant morphism $\Psi$ from the projective toric variety given by the 
			Gröbner fan of $\JMP$ onto the non-coherent component $V_{\Ip}\subseteq \left(\HA\right)_{\textrm{red}}$, that restricts to the normalisation 
			maps constructed above on each affine open chart. Hence, $\Psi$ is the normalisation morphism from the projective toric variety, given by the normal fan of 
			the polytope $\state(\JMP)$, to $V_{\Ip}$.
		\end{proof}
		
		\begin{df}
			We call a state polytope of a generalised universal family, $\state\left(\JMP\right)$, a \emph{generalised state polytope of $\A$}.
		\end{df}
		
		\begin{ex}[continuing \textbf{\ref{ex:1347}}]
			For $\A=\left\{1,3,4,7\right\}$ and the monomial $\A$-graded ideal $\mon = \left\langle a^3,ab,b^2,bc,ad,a^2c^2,bd^2,ac^5,d^4\right\rangle$ we had the 
			universal family $J_\mon(\Ip)$ which is in fact the only one for $\mon$. Thus, $\mon$ is contained in one non-coherent component $V$ and 
			\[\JMP = \left\langle z_1b^2-y_1a^2c,z_2bd^2-y_2ac^4,z_3d^4-y_3c^7,a^3,ab,bc,ad,a^2c^2,ac^5\right\rangle\]
			is the generalised universal family of this reduced non-coherent irreducible component $V$. This component $V$ contains the eight monomial ideals 
			$\mon,\mon_0,\mon_1,...,\mon_6$. A state polytope for $\JMP$ is a cube in $\IQ^{10}$ (there are $10$ variables in total) with the 
			vertices corresponding to the monomial ideals in the following way:
			\[\begin{array}{rcl}
				\mon   & \leftrightarrow & \left(1,-1,4,-2,0,-1,0,0,1,0\right)^t   \\
				\mon_0 & \leftrightarrow & \left(0,0,0,0,0,0,0,0,0,0\right)^t			 \\
				\mon_1 & \leftrightarrow & \left(0,0,-7,4,0,0,1,0,0,-1\right)^t		 \\
				\mon_2 & \leftrightarrow & \left(-1,1,3,-2,-1,-1,0,1,1,0\right)^t  \\
				\mon_3 & \leftrightarrow & \left(1,-1,-3,2,0,-1,1,0,1,-1\right)^t  \\
				\mon_4 & \leftrightarrow & \left(-2,2,-1,0,-1,0,0,1,0,0\right)^t   \\
				\mon_5 & \leftrightarrow & \left(-2,2,-8,4,-1,0,1,1,0,-1\right)^t  \\
				\mon_6 & \leftrightarrow & \left(-1,1,-4,2,-1,-1,1,1,1,-1\right)^t 
			\end{array}\]
			A sketch of the polytope in its affine hull is given in Figure \ref{fig:GenstateOf1347}.
			\begin{figure}[ht]
				\begin{center}
					\psset{unit=0.5cm}
					\begin{pspicture}(0,0)(20,20)
						\psline(8,6)(18,6)
						\psline(8,6)(2,2)
						\psline(8,6)(8,16)
						\psline[linecolor=white, linewidth=1](2,12)(12,12)
						\psline[linecolor=white, linewidth=1](12,2)(12,12)
						\pspolygon[linestyle=solid](2,2)(12,2)(18,6)(18,16)(8,16)(2,12)
						\psline(2,12)(12,12)
						\psline(12,2)(12,12)
						\psline(12,12)(18,16)
						\uput{0.01}[315](12,12){$\mon_2$}
						\uput{0.01}[225](2,2){$\mon_0$}
						\uput{0.01}[315](8,6){$\mon_1$}
						\uput{0.01}[315](12,2){$\mon$}
						\uput{0.01}[0](18,6){$\mon_3$}
						\uput{0.01}[135](2,12){$\mon_4$}
						\uput{0.01}[90](8,16){$\mon_5$}
						\uput{0.01}[45](18,16){$\mon_6$}
					\end{pspicture}
				\end{center}
				\caption{state$(\JMP)$}\label{fig:GenstateOf1347}
			\end{figure}
			Note that the two non-coherent vertices $\mon_0$ and $\mon_6$ are on opposing sides of the polytope, thus the intersection with the coherent component is not given by a face of this 
			polytope. The polytope of the coherent component is three-dimensional and has $51$ vertices (the coherent monomial ideals). One can compute that the intersection is not a face of this 
			polytope either. See Figure \ref{fig:2facesof1347} for a sketch of the two-dimensional faces of the state polytope of $I_{\A}$ that contain the coherent monomial ideals of the 
			non-coherent component.
			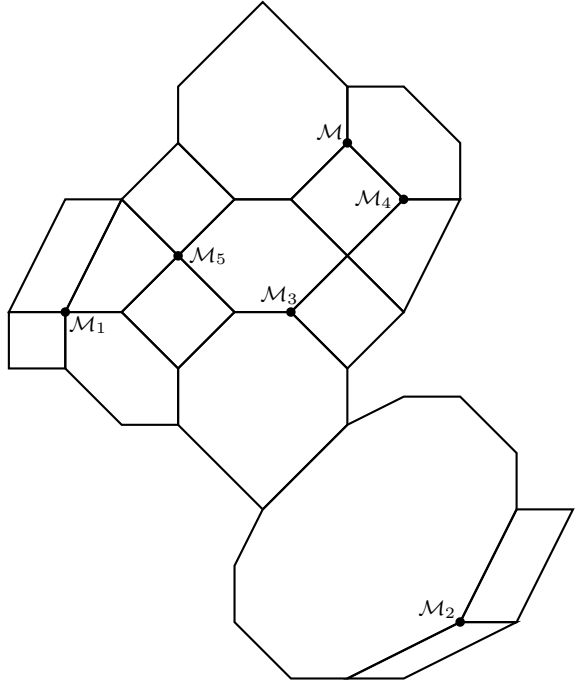
\begin{figure}[ht]
				\begin{center}
					\psset{unit=0.75cm}
					\begin{pspicture}(-1,-5.75)(11,6.75)
						\pspolygon[linestyle=solid](0,0)(1,0)(1,1)(0,1)
						\pspolygon[linestyle=solid](0,1)(1,1)(2,3)(1,3)
						\pspolygon[linestyle=solid](1,1)(2,1)(3,2)(2,3)
						\pspolygon[linestyle=solid](1,1)(2,1)(3,0)(3,-1)(2,-1)(1,0)
						\pspolygon[linestyle=solid](2,1)(3,2)(4,1)(3,0)
						\pspolygon[linestyle=solid](4,1)(5,1)(6,0)(6,-1)(4.5,-2.5)(3,-1)(3,0)
						\pspolygon[linestyle=solid](2,3)(3,2)(4,3)(3,4)
						\pspolygon[linestyle=solid](3,2)(4,1)(5,1)(6,2)(5,3)(4,3)
						\pspolygon[linestyle=solid](5,1)(6,0)(7,1)(6,2)
						\pspolygon[linestyle=solid](6,2)(7,1)(8,3)(7,3)
						\pspolygon[linestyle=solid](5,3)(6,2)(7,3)(6,4)
						\pspolygon[linestyle=solid](7,3)(6,4)(6,5)(7,5)(8,4)(8,3)
						\pspolygon[linestyle=solid](4,3)(5,3)(6,4)(6,5)(4.5,6.5)(3,5)(3,4)
						\pspolygon[linestyle=solid](6,-1)(7,-0.5)(8,-0.5)(9,-1.5)(9,-2.5)(8,-4.5)(6,-5.5)(5,-5.5)(4,-4.5)(4,-3.5)(4.5,-2.5)
						\pspolygon[linestyle=solid](9,-2.5)(10,-2.5)(9,-4.5)(8,-4.5)
						\pspolygon[linestyle=solid](8,-4.5)(6,-5.5)(7,-5.5)(9,-4.5)
						\psdots(1,1)(3,2)(5,1)(7,3)(6,4)(8,-4.5)
						\uput{0.1}[315](1,1){\scriptsize{$\mon_1$}}
						\uput{0.2}[0](3,2){\scriptsize{$\mon_5$}}
						\uput{0.15}[113](5,1){\scriptsize{$\mon_3$}}
						\uput{0.2}[180](7,3){\scriptsize{$\mon_4$}}
						\uput{0.1}[135](6,4){\scriptsize{$\mon$}}
						\uput{0.1}[135](8,-4.5){\scriptsize{$\mon_2$}}
					\end{pspicture}
				\end{center}
				\caption{The two-dimensional faces of $\state(I_{\A})$ that contain the coherent monomial ideals of the non-coherent component}\label{fig:2facesof1347}
			\end{figure}
			
			Furthermore, Sturmfels already computed a family of $\A$-graded ideals for this $\A$ in the proof of \cite[Theorem 10.4]{Sturmfels:Groebner}:
			\begin{equation}\label{eq:Sturmfels}
				\begin{array}{l}\left\langle x_1^2x_3-c_1x_2^2,x_1x_3^4-c_2x_2x_4^2,x_3^7-c_3x_4^4,\right.\\
												\left. x_1^3,x_1x_2,x_1x_4,x_2^3,x_2^2x_4,x_2x_3,x_2x_4^3\right\rangle\end{array}
			\end{equation}
			for $c_1,c_2,c_3\in \field^*$. Moreover, he showed that there is a $\field^*$ parametrisation of this family identifying all $\A$-graded ideals with $c_1c_3/c_2^2$ constant, 
			because these are isomorphic as $\A$-graded ideals. The family (\ref{eq:Sturmfels}) is exactly the ambient torus of the non-coherent component constructed above if we identify 
			$x_1,x_2,x_3,x_4$ with $a,b,c,d$, and $c_1,c_2,c_3$ with $\frac{z_1}{y_1},\frac{z_2}{y_2},\frac{z_3}{y_3}$, respectively. Computing the primary decomposition 
			of the defining ideal of $\mon$ gives
			\[I_\mon' = \left\langle y_6,y_8\right\rangle \cap \left\langle y_7^2-y_3y_9, y_6y_7-y_8y_9,y_3y_6-y_7y_8\right\rangle.\]
			The first primary ideal yields the non-coherent component $V$ and the second ideal the coherent component. Hence,
			\[\left\langle y_7^2-y_3y_9 \right\rangle\]
			gives the intersection of $V$ with the coherent component. Note, that we have identified $y_3$ with $\frac{y_1}{z_1}$, $y_7$ with $\frac{y_2}{z_2}$, and 
			$y_9$ with $\frac{y_3}{z_3}$. Thus, the isomorphism class $c_1c_3/c_2^2=1$ corresponds exactly to the intersection of $V$ with the coherent component.\EX
		\end{ex}
		
	  \begin{rem}
			This example implies that two $\A$-graded ideals, that correspond to points in the same orbit in a non-coherent component, need not be isomorphic as $\A$-graded ideals. This is in 
			contrast to the coherent component, where the orbits are exactly the isomorphism classes.
		\end{rem}

\phantomsection
\addcontentsline{toc}{chapter}{References}
\bibliography{Inco}
\address{
Ren\'e Birkner\\
Mathematisches Institut\\
Freie Universit\"at Berlin\\
Arnimallee 3\\
14195 Berlin, Germany}{rbirkner@math.fu-berlin.de}

\end{document}